\newtheorem{theorem}{Theorem}[section]
\newtheorem{proposition}[theorem]{Proposition}
\newtheorem{corollary}[theorem]{Corollary}
\newtheorem{lemma}[theorem]{Lemma}
\theoremstyle{definition}
\newtheorem{definition}[theorem]{Definition}
\newtheorem{example}[theorem]{Example}
\theoremstyle{remark}
\newtheorem{remark}[theorem]{Remark}
\numberwithin{equation}{section}
\newcommand{\al}{\alpha}
\newcommand{\be}{\beta}
\newcommand{\de}{\delta}
\newcommand{\ep}{\epsilon}
\newcommand{\ga}{\gamma}
\newcommand{\la}{\lambda}
\newcommand{\om}{\omega}
\newcommand{\si}{\sigma}
\newcommand{\vp}{\varphi}
\newcommand{\De}{\Delta}
\newcommand{\La}{\Lambda}
\newcommand{\Si}{\Sigma}
\newcommand{\Om}{\Omega}
\def\CC{\mathbb{C}}
\def\NN{\mathbb{N}}
\def\RR{\mathbb{R}}
\def\ZZ{\mathbb{Z}}
\renewcommand\SS{\mathbb{S}}
\newcommand{\cC}{{\mathcal C}}
\newcommand{\cE}{{\mathcal E}}
\newcommand{\cG}{{\mathcal G}}
\newcommand{\cL}{{\mathcal L}}
\newcommand{\cO}{{\mathcal O}}
\newcommand{\cV}{{\mathcal V}}
\newcommand{\pd}{\partial}
\newcommand\minus\backslash
\newcommand{\id}{{\rm id}}
\newcommand{\ms}{\mspace{1mu}}
\newcommand\lan\langle
\newcommand\ran\rangle
\newcommand{\inte}{\operatorname{int}}
\newcommand{\e}{{\mathrm e}}
\newcommand{\dd}{{\mathrm d}}
\DeclareMathOperator\Div{div} 
\DeclareMathOperator\Rm{Rm} 
 \DeclareMathOperator\Isom{Isom}
\DeclareMathOperator\diag{diag} 
\DeclareMathOperator\dist{dist} 
\DeclareMathOperator\Cr{Cr}
\renewcommand\leq\leqslant
\renewcommand\geq\geqslant
\newlength{\intwidth}
\newcommand\BOm{\overline\Om}
\DeclareMathOperator\Fix{Fix}
 \DeclareMathOperator\ind{ind}
\DeclareMathOperator\SO{SO}
\begin{document}

\title[Critical points of Green's functions]{Critical points of Green's functions\\ on complete manifolds}

\author{Alberto Enciso}
\address{Departement Mathematik, ETH Z\"urich, 8092 Z\"urich, Switzerland}
\email{alberto.enciso@math.ethz.ch}

\author{Daniel Peralta-Salas}
\address{Instituto de Ciencias Matem\'aticas, CSIC-UAM-UC3M-UCM, C/ Serrano 123, 28006 Madrid, Spain}
\email{dperalta@icmat.es}

%
%
\begin{abstract}
We prove that the number of critical points of a Li--Tam Green's function on a complete open Riemannian surface of finite type admits a topological upper bound, given by the first Betti number of the surface. In higher dimensions, we show that there are no topological upper bounds on the number of critical points by constructing, for each nonnegative integer $N$, a manifold diffeomorphic to $\RR^n$ ($n\geq3$) whose minimal Green's function has at least $N$ nondegenerate critical points. Variations on the method of proof of the latter result yield contractible  $n$-manifolds whose minimal Green's functions have level sets diffeomorphic to any fixed codimension $1$ compact submanifold of $\RR^n$.
\end{abstract}
\maketitle

\section{Introduction}
\label{S:intro}

Let $(M,g)$ be a noncompact, complete Riemannian
{$n$-manifold} without boundary and let us denote
by $\cG: (M\times M)\minus\diag(M\times M)\to\RR$ a symmetric Green's function of $(M,g)$, which satisfies
\begin{equation}\label{Gr}
\De_g\cG(\cdot,y)=-\de_y
\end{equation}
for each $y\in M$. We will find it notationally convenient to fix a point $y\in M$, once and for all, and consider a Green's function $G:=\cG(\cdot,y)$ with pole $y$, which is smooth and harmonic in $M\minus\{y\}$.

The study of the Green's functions of the Laplacian in a complete
Riemannian manifold is a classical problem in geometric analysis and
partial differential equations. Consequently, there is a vast
literature on this topic covering,
among many other aspects, the
existence of positive Green's functions~\cite{CY75,LT87,LT92}, upper and
lower bounds, gradient estimates and
asymptotics~\cite{LY86,LT95,CM97,Ho99}, and the connection between Green's
functions and the heat kernel~\cite{Va82,LTW97,GS02}.

In this paper we shall focus on the study of the critical points of Green's functions on a complete Riemannian manifold. The chief difficulty lies in the fact that, generally speaking, the Green's
function estimates are not sufficiently fine to elucidate whether the
gradient of $G$ vanishes in a certain region. Moreover, it is well
known that the codimension of the critical set of $G$ is at least~2
\cite{HS89}, which introduces additional complications in the
analysis. For this reason, our approach is based on a combination of techniques from the geometric theory of dynamical systems, transversality theory and second-order elliptic PDEs.

Let us state our main results. The first theorem asserts that there is a topological upper bound for the number of critical points of any Li--Tam Green's function on a surface of finite type. (The notion of Li--Tam Green's function, which generalizes that of minimal Green's function, is recalled in Section~\ref{S:critical}.)

\begin{theorem}\label{T:dim2}
Let $(M,g)$ be a smooth open Riemannian surface of finite type. Then the number of critical points of any Li--Tam Green's function $G$ on $M$ is not larger than the first Betti number $b_1(M)$, and this upper bound is attained if and only if $G$ is Morse.
\end{theorem}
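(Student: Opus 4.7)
The plan is to apply the Poincar\'e--Hopf index theorem to $\nabla G$ on a suitable compact surface-with-boundary $N\subset M\setminus\{y\}$ containing every critical point of $G$. Two elementary observations drive the argument. First, since $G$ is harmonic away from $y$ and $\dim M=2$, near each critical point $p$ there are conformal coordinates $z$ in which $G-G(p)=\Re(a z^{m_p+1})+O(|z|^{m_p+2})$ for some integer $m_p\geq1$ and some $a\neq0$. In particular the critical set is discrete and
\[
\ind_p(\nabla G)\;=\;-m_p\;\leq\;-1,
\]
with equality iff $p$ is a nondegenerate saddle. Second, the logarithmic asymptotic of $G$ at the pole makes $\nabla G$ point radially towards $y$ on any small circle $\partial D_\epsilon(y)$.

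Using the end behaviour of a Li--Tam Green's function (Section~\ref{S:critical}), I would select a regular value $c$ of $G$ for which $\Omega_c:=\{G\geq c\}$ is compact, contains $y$ and every critical point in its interior, and is homeomorphic to the finite-type compactification $\overline M$ (a compact surface with boundary having one boundary circle per end). Setting $N:=\Omega_c\setminus\inte D_\epsilon(y)$ then produces a compact surface on whose boundary $\nabla G$ is everywhere transverse---inward-pointing on the level curves $\{G=c\}$ and outward-pointing on $\partial D_\epsilon(y)$. Since $\partial N$ is a disjoint union of circles, the boundary Poincar\'e--Hopf formula collapses in dimension two to $\sum_{p\in\Cr(G)}\ind_p(\nabla G)=\chi(N)$, and a short computation gives
\[
\chi(N)\;=\;\chi(\Omega_c)-1\;=\;\chi(\overline M)-1\;=\;-b_1(M),
\]
using that a connected open surface of finite type satisfies $\chi(M)=1-b_1(M)$. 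Combining this with the index formula above yields $\sum_p m_p=b_1(M)$, so the number of critical points is at most $b_1(M)$, with equality iff every $m_p=1$, i.e.\ iff $G$ is Morse.

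The main obstacle is producing the regular value $c$ with the required topological property. For non-parabolic ends the asymptotic $G\to 0$ at infinity makes any sufficiently small positive $c$ below the critical values work, but parabolic ends demand finer control (e.g.\ of logarithmic growth on cylindrical ends) in order to guarantee that each end of $M$ contributes exactly one boundary component of $\partial\Omega_c$ and that no critical points escape to infinity. This is where the detailed structure of Li--Tam Green's functions developed in Section~\ref{S:critical} should be brought to bear.
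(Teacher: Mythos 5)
Your index-theoretic endgame is essentially the same as the paper's: identify each critical point as a generalized saddle with $\ind_p(\nabla G)=1-d_p$ where $d_p\geq 2$ is the degree of the leading homogeneous term (your $m_p=d_p-1$), note $\ind_y(\nabla G)=+1$ at the pole, apply Poincar\'e--Hopf to a suitable compact domain, and conclude $\#\Cr(G)\leq b_1(M)$ with equality iff all $d_p=2$. This part of the calculation is correct. But there is a genuine gap that the proposal flags and then sidesteps: you never prove that $\Cr(G)$ is finite, and without that there is no regular value $c$ with all critical points in $\inte\Omega_c$, no compact $N$, and no Poincar\'e--Hopf formula to apply. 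The critical set is discrete, but a priori it could accumulate at the ends. The Li--Tam properties~(i) and~(ii) alone do not forbid this: ``decreasing'' in the Li--Tam sense is a $\sup$/$\max$ statement on geodesic spheres, not pointwise monotonicity, so nothing immediately prevents infinitely many saddles with critical values marching down toward an end's asymptotic value (whether that value is finite or $-\infty$).

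This is not a side issue; it is the hard step, and the paper is explicit about it in the introduction (``The crucial step\ldots is to show that the number of critical points of $G$ is necessarily finite, which requires a delicate local and global analysis of the possible saddle connections''). The paper's proof spends almost all of its effort there: it passes to the end-compactification $\Sigma$, extends the regularized flow $\theta_t$ continuously across the punctures $h_j$, introduces a partial order $\prec$ on the fixed-point set encoding which critical points are connected by chains of gradient trajectories (Lemmas~\ref{L:hatG}--\ref{L:h}), shows via Lemma~\ref{L:halfb} and Lemma~\ref{L:curve} that an infinite critical set would yield infinitely many pairwise-distinct $\theta_t$-invariant loops in $\Sigma$, and then uses the harmonicity of $G$ (Lemma~\ref{L:homo}, via the maximum principle and volume-preservation of the gradient flow) to show these loops must lie in pairwise distinct homology classes---contradicting finite generation of $\pi_1(\Sigma)$. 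Only after this does the Hopf index argument, identical in spirit to yours, close the proof. So your proposal reproduces the easy final third of the proof but omits the finiteness machinery, which is the theorem's actual content; ``the detailed structure of Li--Tam Green's functions\ldots should be brought to bear'' is exactly where the argument must be supplied rather than invoked.
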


Theorem~\ref{T:dim2} is a substantial extension of the classical result~\cite{Wa50} that the Dirichlet Green's function of a simply or doubly connected domain in the Euclidean plane respectively has zero or one critical points and, to our best knowledge, is the first general finiteness result for critical points in noncompact manifolds. The crucial step in the proof of Theorem~\ref{T:dim2} is to show that the number of critical points of $G$ is necessarily finite, which requires a delicate local and global analysis of the possible saddle connections. It is not clear to us how to prove the finiteness of the critical set using complex function theory or PDE methods, even for analytic metrics.

Our second result complements~\ref{T:dim2} by showing that the number of critical points of the Green's function of a manifold of dimension $n\geq3$ cannot admit a topological upper bound. In fact,  we provide a procedure for constructing analytic metrics in $\RR^n$ ($n\geq 3$) whose minimal Green's functions have level sets of prescribed topology and any finite number of nondegenerate critical points, which allows us to prove the following

\begin{theorem}\label{T:everything}
  Let $N$ be a positive integer and $\Si$ a smooth codimension $1$ closed submanifold of $\RR^n$. For any $n\geq3$ there exist real analytic complete Riemannian manifolds $(M_j,g_j)$ ($1\leq j\leq 3$) diffeomorphic to $\RR^n$ such that:
\begin{enumerate}
\item The minimal Green's function of $(M_1,g_1)$ has at least $N$ nondegenerate critical points.
\item The minimal Green's function of $(M_2,g_2)$ has a level set diffeomorphic to~$\Si$.
\item The critical set of the minimal Green's function of $(M_3,g_3)$ has codimension at most $3$.
\end{enumerate}
\end{theorem}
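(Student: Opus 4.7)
The plan is to deduce all three parts of the theorem from a single existence result: given a smooth positive function $u$ on $\RR^n\minus\{y\}$ with a Euclidean-type singularity $u(x)\sim c|x-y|^{2-n}$ at $y$, decaying to zero at infinity, and $g_0$-harmonic on full open neighborhoods of $y$ and of its critical set, there exists a complete real analytic metric $g$ on $\RR^n$ such that $u$ is the minimal Green's function of $(\RR^n,g)$ with pole $y$. Each of the three conclusions will then follow by exhibiting a $u$ with the desired features.

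For the existence of $g$, I would use the conformal ansatz $g=e^{2\sigma}g_0$, where $g_0$ is the Euclidean metric. The identity
\[
\De_g u = e^{-2\sigma}\bigl(\De_0 u + (n-2)\,g_0(\grad\sigma,\grad u)\bigr)
\]
reduces $\De_g u=0$ in $\RR^n\minus\{y\}$ to a first-order linear transport equation for $\sigma$ with characteristics along the Euclidean gradient flow of $u$. On the neighborhoods of the critical set and of $y$ where $u$ is $g_0$-harmonic by hypothesis, the right-hand side vanishes, and $\sigma$ is freely constant along gradient trajectories. Elsewhere $\grad u\neq 0$ and $\sigma$ is obtained by integrating along characteristics with smooth initial data prescribed on a hypersurface transverse to $\grad u$. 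Forcing $\sigma$ to grow sufficiently fast at infinity yields a complete $g$, and real analyticity is achieved by approximating $\sigma$ in the $C^2$ topology by a real analytic function and invoking structural stability of nondegenerate critical points (for (i)) and of regular level sets (for (ii)) to preserve the relevant features.

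To produce $u$ in each case, I would proceed as follows. For (i), start with the combination $u_0(x)=\sum_{k=0}^{N}a_k|x-p_k|^{2-n}$ of Euclidean point sources at collinear positions $p_k$ with alternating-sign coefficients; for suitable choices this $g_0$-harmonic function exhibits at least $N$ nondegenerate saddles between consecutive sources. One then removes the unwanted poles at $p_1,\dots,p_N$ by subtracting $g_0$-harmonic cutoff corrections supported away from a fixed neighborhood of the saddle set and of $y=p_0$, and adds a large positive constant to guarantee positivity. For (ii), one constructs a $g_0$-harmonic function with $\Sigma$ as the regular level set $\{u=c_0\}$ in a tubular neighborhood of $\Sigma$ (e.g.\ via a Dirichlet problem with boundary data $0$ on $\Sigma$), then patches it into the global $u$ as above. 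For (iii), one takes $u$ depending on only $n-2$ or $n-3$ of the Euclidean coordinates in a fixed compact region, which forces a critical submanifold of codimension $2$ or $3$.

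The main obstacle is identifying the constructed $u$ with the \emph{minimal} Li--Tam Green's function. Denoting by $G_k$ the Dirichlet Green's function of $(\RR^n,g)$ on a large geodesic ball $B_k$, the difference $u-G_k$ is nonnegative and $g$-harmonic on $B_k$, and $G_k\uparrow G_\infty$ pointwise to the minimal positive Green's function. Since $u$ and $G_\infty$ have matching singularities at $y$, the limit $h:=u-G_\infty$ extends to a nonnegative $g$-harmonic function on all of $\RR^n$ that vanishes at infinity, and the maximum principle forces $h\equiv0$. A secondary technical difficulty is that $u$ must be $g_0$-harmonic on a \emph{full} open neighborhood of each critical point (not merely at the point itself) so that the transport equation for $\sigma$ is consistently solvable there; this is arranged by supporting the cutoff corrections strictly outside such neighborhoods.
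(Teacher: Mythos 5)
Your proposal takes a genuinely different route from the paper. The paper fixes a bounded Euclidean domain $\Om$ whose Dirichlet Green's function already exhibits the desired features, builds a conformal factor that is $\approx 1$ on $\Om$ and $\to\infty$ outside, and then uses the Kasue existence theorem together with a variational (energy) argument to show that the minimal Green's functions of these metrics converge to $G_\Om$ in $C^k_{\mathrm{loc}}(\Om\minus\{y\})$; the conclusions follow by structural stability. You instead try to \emph{prescribe} the candidate Green's function $u$ outright and solve for a conformal factor making $u$ exactly $g$-harmonic. The core of your plan, solving $\nabla u\cdot\nabla\sigma=-\De_0 u/(n-2)$, is equivalent to the integrating-factor problem $\Div(\e^{(n-2)\sigma}\nabla u)=0$, and this is where the serious gap lies.

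The transport equation you write has the Euclidean gradient trajectories of $u$ as characteristics, and for a function $u$ with saddle points these characteristics do \emph{not} form a foliation admitting a global transversal: the stable and unstable manifolds $W^{s}(z_j)$, $W^{u}(z_j)$ are separatrices along which the characteristic flow is discontinuous. Concretely, along two nearby trajectories on opposite sides of a separatrix, the accumulated integral $\int_{-\infty}^{0}\bigl(-\De_0 u/(n-2)\bigr)\,dt$ can differ, so that the $\sigma$ you construct has a jump discontinuity across the unstable manifold even though the source term vanishes identically in a neighborhood of the critical point. Your remark that requiring $u$ to be $g_0$-harmonic on a \emph{neighborhood} of $\Cr(u)$ makes the transport equation ``consistently solvable there'' only removes the local blow-up near the saddle; it does not address this global compatibility condition, which lives far from the critical point and depends on the placement of $\supp(\De_0 u)$ relative to the separatrices (sets that are themselves determined by $u$). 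A similar, subtler compatibility condition arises at the pole $y$: with $\De_0 u=0$ near $y$, $\sigma$ is constant along radial trajectories, so its angular dependence near $y$ must be killed by a delicate choice of initial data, again a global constraint you do not address. None of this is visible in the paper's argument precisely because the paper never tries to make the prescribed $u$ \emph{exactly} harmonic.

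There are secondary issues that would also need repair. In (i), $u_0=\sum a_k\,|x-p_k|^{2-n}$ with ``alternating-sign'' coefficients necessarily changes sign (it tends to $-\infty$ at the negatively weighted poles), and ``adding a large positive constant'' destroys the decay $u\to 0$ at infinity that your minimality argument requires; with same-sign coefficients the saddle structure is there, but the cutoff corrections that remove the poles need to be quantitatively controlled to preserve both positivity and the nondegeneracy of the saddles. In (iii), after making $u$ depend on only $n-3$ coordinates locally, the critical set is a degenerate submanifold, so the stability argument you invoke to pass from $\sigma$ to an analytic approximation does not apply directly; the paper handles this by imposing an explicit $\SO(n-2)$ symmetry so that symmetry, rather than nondegeneracy, forces the critical set to persist. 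Finally, the analytic-approximation step itself is nontrivial: replacing $\sigma$ by $\tilde\sigma$ changes the metric, so $u$ ceases to be harmonic and one must reprove the convergence of the new minimal Green's function (via Kasue's theorem for existence and Schauder estimates for convergence), which is essentially a repetition of the paper's Theorem~\ref{T:main}.
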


Additional motivation for this theorem comes from a question of Kawohl~\cite{Kaw88}, recently solved in~\cite{EP08}, concerning the possible level sets and critical points of the solution to an exterior boundary problem in Euclidean space. In the context of Riemannian
geometry, the natural analogue of Kawohl's problem is whether there
are nontrivial restrictions on the critical and level sets of the
minimal Green's function (when it exists) of a Riemannian manifold
diffeomorphic to $\RR^n$. The first statement in Theorem~\ref{T:everything} is 
reminiscent of results of Morse and Sheldon~\cite{Mo70,Sh80} on the existence of Morse harmonic functions in bounded domains of
$\RR^2$ and $\RR^3$ with an arbitrary number of nondegenerate critical
points, but these authors' constructions cannot be modified to deal with the problem studied in this paper.

The paper is organized as follows. In Section~\ref{S:critical} we derive some preliminary results on the integral curves of the gradient of $G$ that will be of use in the rest of the paper. In Section~\ref{S:surfaces} we give the proof of Theorem~\ref{T:dim2} and, using the same ideas, show that a Li--Tam Green's function of a manifold diffeomorphic to $\RR^n$ with an $\SO(n-1)$ isometry group does not have any critical points. In Section~\ref{S:Morse} we prove that the Dirichlet Green's function of a bounded domain in a Riemannian manifold is generically Morse, a result we need for the proof of Theorem~\ref{T:everything} and which cannot be obtained from Albert's, Uhlenbeck's or Bando--Urakawa's analogous theorems for the eigenfunctions of the Laplacian~\cite{Uh76,Al78,BU83} or Damon's results for filtered differential operators~\cite{Da97}. Finally, in Section~\ref{S:main} we present the proof of Theorem~\ref{T:everything}.

We conclude this section by introducing some standard notation. We shall denote by $\De_g$, $\nabla_g$,
$\dd V_g$, $|\cdot|_g$ and $\dist_g$, respectively, the Laplacian,
gradient operator, Riemannian measure, norm and distance function in
$(M,g)$, and we shall reserve the notation $\De$, $\nabla$, $\dd x$,
$|\cdot|$ and $\dist$ for the corresponding objects in Euclidean space
$(\RR^n,g_0)$. We shall use the notation  $B_g(x,r)$
and $B(x,r)$, in each case, for the open geodesic balls in $(M,g)$ and in $(\RR^n,g_0)$
of center $x$ and radius $r$. The {\em critical set} of a $C^1$ function $f$ will be denoted by $\Cr(f)$, and is defined as the set of points $x$ in the domain of $f$ such that $\dd f(x)=0$. Throughout this paper, the manifold $(M,g)$ will be assumed to be connected, oriented and of class $C^\infty$.

\section{Preliminary results on critical points and level sets of Green's functions}
\label{S:critical}

In this section we shall derive some preliminary results on the
level sets and critical points of $G$ in a complete manifold $(M,g)$
of dimension $n\geq 3$ using techniques from the geometric theory of
dynamical systems. The approach and results presented
in this section are based, and significantly extend, Brelot and
Choquet's work on Green's lines in classical potential
theory~\cite{BC51}, and will be of much use in forthcoming
sections. The two-dimensional case presents peculiarities of its own,
and will be treated in detail in Section~\ref{S:surfaces}. As the
structure of a function in a neighborhood of a critical point depends
heavily on whether we are in the smooth or analytic category, we shall
assume throughout this section that $(M,g)$ is a $C^\om$ Riemannian
manifold; in particular, this precludes some of the subtleties present
in the case of general $C^\infty$
metrics~\cite{DoFe88,HS89,HHN99}.

It is customary to impose some decay conditions at infinity in order
to control certain global properties of the level sets of $G$; in
particular, a standard choice is to restrict one's attention to the
minimal Green's function whenever it exists. More generally, we shall
assume in what follows that the Green's function $G$ has been obtained
through Li and Tam's exhaustion procedure~\cite{LT87}, which provides
some control on the behavior of $G$ at infinity. This kind of
solutions to Eq.~\eqref{Gr} will be called {\em Li--Tam Green's
  functions}. Whereas this requirement does not have any bearing on
the local results below, it is essential in the analysis of the global
properties of $G$ that we shall also present in this section.

Let us henceforth denote by $\cG_\Om:(\Om\times\Om)\minus\diag(\Om\times\Om)\to\RR$ the symmetric Dirichlet Green's function of a bounded domain $\Om\subset M$, which is defined by
\begin{equation}\label{GrDom}
\De_g\cG_\Om(\cdot,y)=-\de_y\quad\text{in }\Om\,,\qquad \cG_\Om(\cdot,y)=0\quad \text{on
}\pd\Om\,,
\end{equation}
and set $G_\Om:=\cG_\Om(\cdot,y)$. For completeness, we recall that a Li--Tam Green's function
is constructed from an exhaustion $\Om_1\subset\Om_2\subset\cdots$ of
$M$ by bounded domains. A theorem of Li and Tam~\cite{LT87}
ensures that in any smooth Riemannian manifold there exists a sequence of nonnegative real numbers
$(a_j)_{j=1}^\infty$ such that $G_{\Om_j}-a_j$ converges uniformly on
compact sets of $M\minus\{y\}$ to a Green's function $G$ with pole $y$, and
that it coincides with the minimal one whenever the latter
exists. Li--Tam Green's functions are generally nonunique, but in any case a Li--Tam Green's function $G$ possesses the
following properties~\cite{LT87}:
\begin{enumerate}
\item $G$ is decreasing, i.e.,
\[
\sup_{M\minus B_g(y,r)} G=\max_{\pd B_g(y,r)}G
\]
for all $r>0$, $B_g(y,r)$ being the geodesic ball centered at the pole $y$
of radius $r$.

\item $G$ tends to a definite limit (possibly $-\infty$) at
each end of $M$.

\item If $H$ is an amenable isometry group of $(M,g)$, one can assume that $\cG(\phi(x),\phi(y))=\cG(x,y)$ for all $x,y\in M$ and $\phi\in H$ (cf.\ e.g.~\cite{EP07}).
\end{enumerate}

First of all, let us recall some definitions from real analytic geometry. The {\em dimension} of a closed analytic set $S$ is
\[
\dim(S)=\sup_{x\in S}\dim_x(S)\,,
\]
the {\em local dimension} of $S$ at $x$ being
\[
\dim_x(S)=\sup\big\{\dim(P):P\text{ open topological submanifold of }S,\, x\in P\big\}\,.
\]
This is simply the dimension of the top-dimensional strata of the Lojasiewicz stratification of $S$ at $x$~\cite{BM88}. $S$ is said to have {\em pure dimension} $k$ if $\dim_x(S)=k$ for all $x\in S$.

In the first place, in the following elementary proposition we shall collect some easy
properties of the critical set of $G$ which will be frequently used in
the rest of the section. 

\begin{proposition}\label{P:elem}
Both $\Cr(G)$ and the level sets $G^{-1}(c)$ ($c>\inf G$)
are analytic sets, and their connected components are compact and without
boundary. Moreover, $G^{-1}(c)$ has pure codimension $1$ and $\Cr(G)$ has codimension at
least~$2$.
\end{proposition}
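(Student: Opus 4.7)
The plan is to verify each assertion in turn, using the real-analyticity of $G$ on $M\minus\{y\}$ together with the Li--Tam asymptotics recalled above. Since $(M,g)$ is of class $C^\om$ and $\De_g G=0$ on $M\minus\{y\}$, elliptic regularity in the analytic category yields $G\in C^\om(M\minus\{y\})$. Hence $\Cr(G)$ is locally the common zero locus of the components of $dG$ and $G^{-1}(c)$ is the zero locus of the nonconstant analytic function $G-c$, so both are analytic subsets of $M\minus\{y\}$.

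For the codimension of the level sets, at a regular point $x_0\in G^{-1}(c)$ the implicit function theorem produces a smooth embedded hypersurface. At a critical point $x_0\in G^{-1}(c)$, I would expand $G-c$ in normal coordinates at $x_0$: the leading term is a nonzero homogeneous harmonic polynomial of degree $k\geq2$ (the Euclidean Laplacian of this leading term is the lowest-order contribution to $\De_g G=0$ and must therefore vanish), and any such polynomial takes both signs on every centered sphere, so the strong maximum principle forces $G-c$ to take both signs in every neighborhood of $x_0$, constraining the local dimension of $G^{-1}(c)$ at $x_0$ to be exactly $n-1$. For $\Cr(G)$ I would argue by contradiction: if it contained a real-analytic hypersurface $\Si$, then $dG\equiv0$ on $\Si$ would force $G$ to be locally constant along $\Si$ and its normal derivative to vanish there; Cauchy--Kovalevskaya applied to $\De_g G=0$ would then give $G$ constant in a neighborhood of $\Si$ and, by analytic continuation on the connected set $M\minus\{y\}$, everywhere, contradicting the singularity at $y$.

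It remains to address the topological properties. The sets $\Cr(G)$ and $G^{-1}(c)$ are closed in $M\minus\{y\}$ as preimages of closed sets, and since $G(x)\to+\infty$ as $x\to y$ they are contained in $M\minus B_g(y,\ep)$ for some $\ep>0$ and thus closed in $M$; this closedness is the meaning I attach to the ``without boundary'' claim. The main obstacle is the compactness of the connected components. The Li--Tam decreasing property (i) gives $\sup_{M\minus B_g(y,r)}G=\max_{\pd B_g(y,r)}G$, which by (ii) tends to $L^*:=\max_E L_E$ as $r\to\infty$, where $L_E$ is the limit of $G$ at end $E$. For $c>L^*$ the level set is immediately confined to a compact geodesic ball. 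In the remaining range $\inf G<c\leq L^*$ one must rule out unbounded components: a hypothetical unbounded connected component of $G^{-1}(c)$ would have to approach an end $E$ with $L_E=c$, and this I would exclude by a Brelot--Choquet style analysis of the integral curves of $\nabla_g G$, exploiting the strict decrease of $\sup G$ outside expanding balls to derive a contradiction. An entirely analogous gradient-flow argument yields the compactness of the connected components of $\Cr(G)$, noting that $G$ is locally constant along each such component so the same tracking of integral curves applies.
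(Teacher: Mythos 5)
Your treatment of analyticity and of the codimension bounds is broadly in the spirit of the paper's proof: analytic ellipticity gives $G\in C^\om(M\minus\{y\})$ and hence the critical and level sets are analytic; the codimension-$\geq 2$ statement for $\Cr(G)$ via Cauchy--Kovalevskaya is exactly the parenthetical argument the paper gives; and your sign-change argument for the level sets (the leading Taylor term at a critical point is a nonconstant homogeneous harmonic polynomial, hence changes sign on every centered sphere) is a legitimate local substitute for the paper's Lojasiewicz-tube argument, once you add the standard fact that an analytic set that separates its complement locally must contain a codimension-$1$ stratum.

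However, you have misread the ``without boundary'' assertion. It is \emph{not} a remark about $\Cr(G)$ and $G^{-1}(c)$ being closed subsets of $M$; it is a topological statement about the local structure of the connected components of an analytic set, and it is precisely what Sullivan's theorem~\cite{Su71} provides: the link of any point of a real analytic set has even Euler characteristic, so no point of the set can be a manifold-with-boundary endpoint (for instance, a one-dimensional analytic stratum cannot terminate at an interior point). The paper invokes Sullivan for exactly this, and it is used later (e.g.\ in the proof of Proposition~\ref{T:regions}, where $G^{-1}(c)$ must disconnect a neighborhood). Simply asserting closedness does not recover this and leaves a genuine gap.

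The compactness claim is also only half-proved. You correctly observe that for $c>L^*:=\max_E L_E$ the Li--Tam decreasing property confines $G^{-1}(c)$ to a compact ball, but in the remaining range $\inf G<c\leq L^*$ you only say that unbounded components ``would be excluded by a Brelot--Choquet style analysis of the integral curves of $\nabla_g G$''. That is a promise, not an argument: nothing is actually ruled out. The paper's argument, though terse, is that an unbounded component of $G^{-1}(c)$ would have to approach an end $E$ with $L_E=c$, and one then derives a contradiction from properties~(i) and~(ii) of Li--Tam Green's functions (the strict decrease of $\sup_{M\minus B_g(y,r)}G$ to the end limit and the fact that the level set separates $\{G>c\}$ from $\{G<c\}$). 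You should supply that argument rather than defer it. The same remark applies to the compactness of components of $\Cr(G)$.
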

\begin{proof}
  $\Cr(G)$ and $G^{-1}(c)$ are certainly analytic
  because so is $G$, and therefore their connected components have no
  boundary by a theorem of Sullivan~\cite{Su71}. Each component of a level set of $G$ is compact
  because $G$ is decreasing and tends to a definite limit at each end
  of the manifold. It is well known~\cite{HS89} that the codimension of $\Cr(G)$ is at least $2$ (in the analytic case, this property follows directly from the Lojasiewicz structure theorem and Cauchy--Kowalewski theorem). 

The analyticity of $G$ in $M\backslash \{y\}$ implies that $G^{-1}(c)$ admits a Lojasiewicz stratification~\cite[Theorem 6.3.3]{KP02} and has codimension at least 1. In order to prove that $G^{-1}(c)$ has pure codimension $1$, let us suppose that
this is not the case. Then there must exist a
point $x$ and a neighborhood $U\ni x$ such that the connected set
$V:=G^{-1}(c)\cap U$ has pure codimension $k\geq2$~\cite{BM88,KP02}. By the implicit
function theorem, $V$ is contained in the critical set of $G$, and
Lojasiewicz's vanishing theorem~\cite[Theorem 6.3.4]{KP02} shows that
the neighboring level sets of $G$ in $U$ are tubes around $V$. Hence $V$
is a local maximum or minimum of $G$, contradicting its harmonicity.
\end{proof}
\begin{remark}\label{RP:elem}
Proposition~\ref{P:elem} and its proof apply to Dirichlet Green's functions as well. If $f$ is a harmonic function, it also follows from the proof of the proposition that $f^{-1}(c)$ has pure codimension $1$ and that the codimension of its critical set is at least $2$.
\end{remark}

%

As is well known, critical level sets are not necessarily
submanifolds, so that their local structure can be rather
complicated. We shall present next a technical result on the structure
of a critical level set in a neighborhood of an isolated critical
point which is crucial for the proof of
Proposition~\ref{P:halfline}. This proposition allows to rule out,
e.g., the appearance of certain kind of $n$-dimensional cusp
singularities~\cite{Ar85}. The proof makes use of a lemma based on
ideas by Brelot and Choquet~\cite{BC55}, a simple proof thereof is
given below.

\begin{lemma}\label{L:HarmPoly}
Let $Q,R$ be real homogeneous polynomials in $n$ variables. Let us
suppose that $P(x):=Q(x)R(x)$ satisfies $\De P=0$ and that $Q^{-1}(0)\subset R^{-1}(0)$. Then $Q$ is
constant.
\end{lemma}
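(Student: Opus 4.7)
The plan is to proceed by contradiction: assume $q := \deg Q \geq 1$ (with $Q, R \not\equiv 0$, so that $P = QR$ is a nontrivial harmonic polynomial of degree $d = q + r$). I would split the argument into two cases according to whether the real analytic set $\{Q = 0\}$ has a smooth codimension $1$ point.

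In the first case, suppose there exists $x_0 \in \{Q = 0\}$ with $\nabla Q(x_0) \neq 0$. Then $\{Q = 0\}$ is locally a smooth hypersurface $\Sigma$ near $x_0$, on which the hypothesis $\{Q=0\}\subset\{R=0\}$ forces $R \equiv 0$. The product rule $\nabla P = R\,\nabla Q + Q\,\nabla R$ immediately gives $\nabla P \equiv 0$ on $\Sigma$, so that $\Sigma \subset \Cr(P)$. This contradicts the codimension $\geq 2$ bound for the critical set of a nontrivial harmonic function (Remark~\ref{RP:elem}).

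In the complementary case, $\nabla Q \equiv 0$ on $\{Q = 0\}$, so $\{Q = 0\}$ has codimension $\geq 2$ in $\RR^n$. Its complement is then connected and $Q$ has constant sign on $\RR^n$; after replacing $Q$ by $-Q$ we may assume $Q \geq 0$, and the identity $Q(-x) = (-1)^q Q(x)$ forces $q$ to be even. I would then exploit the rigidity provided by orthogonality of spherical harmonics: since $P|_{S^{n-1}}$ is a spherical harmonic of degree $d$, while the standard Fischer decomposition expresses $R|_{S^{n-1}}$ as a sum of spherical harmonics of degrees $r, r-2, \ldots$, all strictly less than $d$, orthogonality yields
\[
0 = \int_{S^{n-1}} P R \, \dd\sigma = \int_{S^{n-1}} Q R^2 \, \dd\sigma.
\]
The integrand is nonnegative, so $Q R^2 \equiv 0$ on $S^{n-1}$; by homogeneity this propagates to $\RR^n$, and since $\RR[x_1, \ldots, x_n]$ is an integral domain we conclude $Q \equiv 0$ or $R \equiv 0$, a contradiction.

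The hard part is the second case, where the codimension argument of Case~1 collapses once $\{Q = 0\}$ is too small to contain a hypersurface. The conceptual trick there is that the high codimension of $\{Q=0\}$ forces the \emph{global} positivity $Q \geq 0$, and combining this positivity with the $L^2$-orthogonality of spherical harmonics of distinct degrees provides exactly the rigidity needed to conclude.
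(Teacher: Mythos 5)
Your overall strategy---orthogonality of spherical harmonics combined with a constant-sign argument for $Q$---is exactly the paper's, and your Case~1 is literally the paper's codimension argument. But Case~2 has a genuine gap, and it is load-bearing.

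The implication ``$\nabla Q\equiv 0$ on $\{Q=0\}$ $\Rightarrow$ $\{Q=0\}$ has codimension $\geq 2$'' is false. Take $Q(x)=x_1^2$: it is homogeneous, $\nabla Q=(2x_1,0,\dots,0)$ vanishes identically on $\{Q=0\}=\{x_1=0\}$, yet $\{Q=0\}$ is a hyperplane of codimension $1$. Worse, the conclusion you actually need in Case~2---that $Q$ has constant sign---is itself not a consequence of $\nabla Q\equiv 0$ on $\{Q=0\}$: take $Q(x)=x_1^3$, for which $\nabla Q=3x_1^2\,e_1$ vanishes on $\{x_1=0\}$ while $Q$ changes sign. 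So your Case~2 reasoning breaks at the first step, and the dichotomy ``$\nabla Q\neq0$ somewhere on $\{Q=0\}$'' versus ``$\nabla Q\equiv0$ on $\{Q=0\}$'' is not the right one.

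The fix is to notice that your Case~1 argument never actually uses $\nabla Q(x_0)\neq 0$: all it needs is that $\{Q=0\}$ contain a codimension-$1$ analytic piece $\Sigma$. On any such $\Sigma$ both $Q$ and $R$ vanish (the latter by the hypothesis $Q^{-1}(0)\subset R^{-1}(0)$), so $\nabla P=R\,\nabla Q+Q\,\nabla R=0$ on $\Sigma$, giving $\Cr(P)\supseteq\Sigma$ of codimension $\leq 1$, contradicting Remark~\ref{RP:elem}. This shows directly that $Q^{-1}(0)$ has codimension $\geq 2$, whence its complement is connected, $Q$ has constant sign, and the orthogonality integral yields the contradiction. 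This is precisely the paper's argument; the case split you introduced is unnecessary and, as written, unsound.
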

\begin{proof}
Let us call $\SS^{n-1}$ the unit sphere in $\RR^n$ and denote by a
bar the restriction of a polynomial in $\RR^n$ to $\SS^{n-1}$. As
$P$ is harmonic and homogenous of degree $d:=\deg P$, it follows
that $\overline P$ is a spherical harmonic of order $d$. If $Q$
is not constant, $R$ has degree at most $d-1$, so there exist
spherical harmonics $Y_j$ of order $j$ and constants $c_j$ such
that
\[
\overline R=\sum_{j=0}^{d-1}c_jY_j\,.
\]
Hence it follows that $\overline R$ and $\overline P$ are orthogonal, i.e.,
\begin{equation}\label{int0}
\int_{\SS^{n-1}}\overline R\,\overline P\,\dd\si=\int_{\SS^{n-1}}\overline
R^2\,\overline Q\,\dd\si=0\,.
\end{equation}
Let us now observe that $P^{-1}(0)=R^{-1}(0)$ has pure codimension 1 by Proposition~\ref{P:elem} and Remark~\ref{RP:elem}.
It then follows that $Q^{-1}(0)$ has codimension $1$ in $R^{-1}(0)$, since
otherwise $\nabla P=Q\,\nabla R+R\,\nabla Q$ would vanish identically on a
codimension 1 set, contradicting the fact that
$\Cr(P)$ has codimension at least $2$. In turn, this implies that $Q$ does not
change sign, in contradiction with the fact that the
integral~\eqref{int0} is zero.
\end{proof}

\begin{proposition}\label{T:regions}
Let $U$ be a small neighborhood of an isolated critical point $z$
of $G$ and set $c:=G(z)$. If $z$ is also an isolated critical point of the first nonzero homogeneous term of the Taylor expansion of $G$ at $z$, then $U\minus G^{-1}(c)$ has at least three
connected components.
\end{proposition}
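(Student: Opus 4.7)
Working in geodesic normal coordinates centered at $z$, the Taylor expansion of $G$ has the form $G(x)-c=P(x)+R(x)$, where $P$ is the leading (necessarily harmonic) homogeneous polynomial of degree $d=\mathrm{ord}_z(G-c)\geq 2$ and $R(x)=O(|x|^{d+1})$. The polynomial $P$ is harmonic because $g_{ij}(0)=\de_{ij}$ in normal coordinates, so the Euclidean and Riemannian Laplacians agree to leading order. The hypothesis that $z$ is an isolated critical point of $P$ means $\nabla P$ vanishes only at the origin, and the homogeneity of $\nabla P$ upgrades this to the quantitative bound $|\nabla P(x)|\geq c_0|x|^{d-1}$ on all of $\RR^n$. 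Combined with Euler's identity $x\cdot\nabla P=dP$, which forces $\nabla P$ to be tangent to $\SS^{n-1}$ along $\Sigma:=P^{-1}(0)\cap\SS^{n-1}$, this makes $\Sigma$ a smooth compact hypersurface of the unit sphere.

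The plan is to reduce to a purely algebraic question on $P$ and then invoke Lemma~\ref{L:HarmPoly}. First I would show that, for $U$ a sufficiently small ball centered at $z$, the connected components of $U\minus G^{-1}(c)$ are in bijection with those of $\SS^{n-1}\minus\Sigma$. Rescaling $x=r\,y$ with $y\in\SS^{n-1}$ gives $G(r\,y)-c=r^d\bigl(P(y)+O(r)\bigr)$ and $\nabla_yG(r\,y)=r^d\bigl(\nabla P(y)+O(r)\bigr)$, so the uniform bound $|\nabla P|\geq c_0$ on $\SS^{n-1}$ together with the implicit function theorem implies that, for small $r$, the level set $G^{-1}(c)$ in any spherical annulus $B(0,2r)\minus B(0,r/2)$ is a smooth hypersurface $C^1$-close and ambient-isotopic to the cone $P^{-1}(0)$. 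Since Proposition~\ref{P:elem} and Remark~\ref{RP:elem} guarantee that $G^{-1}(c)$ has pure codimension $1$ near $z$, these annular isotopies assemble into a local homeomorphism identifying the components of $U\minus G^{-1}(c)$ with those of $\SS^{n-1}\minus\Sigma$.

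It then remains to establish that $\SS^{n-1}\minus\Sigma$ has at least three components. Since $P$ is a nonzero harmonic polynomial of degree $\geq 1$, $\int_{\SS^{n-1}}P\,\dd\si=0$, so $P$ changes sign on the sphere and the complement already contains the two components $A^\pm:=\{\pm P|_{\SS^{n-1}}>0\}$. The crux is to rule out the case of exactly two components: assuming each $A^\pm$ is connected and $\Sigma$ is a smooth connected two-sided hypersurface of $\SS^{n-1}$, the plan is to extract from this geometric data a nontrivial polynomial factorization $P=Q\,R$ with $Q^{-1}(0)\subset R^{-1}(0)$, and then invoke Lemma~\ref{L:HarmPoly} to force $Q$ to be constant, yielding the contradiction. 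I expect this factorization step---producing a polynomial factor from the two-sidedness of the cone $P^{-1}(0)$ and the connectedness of the two nodal domains---to be the main obstacle, and it is where the algebraic-geometric setup developed earlier in the section (pure codimension $1$ of harmonic level sets, codimension $\geq 2$ of the critical set, and the sign argument underlying Lemma~\ref{L:HarmPoly}) must come to bear.
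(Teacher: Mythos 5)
Your strategy matches the paper's almost exactly: pass to the leading harmonic homogeneous term $P$, transfer the component count to the complement of the cone $P^{-1}(0)$, observe that the mean-value property forces $P$ to change sign, and aim to contradict the two-component hypothesis via Lemma~\ref{L:HarmPoly} applied to a factorization $P=QR$ with $Q^{-1}(0)\subset R^{-1}(0)$. The reduction steps you give (harmonicity of $P$ by Bers, the bound $|\nabla P|\geq c_0|x|^{d-1}$, smoothness of $\Sigma=P^{-1}(0)\cap\SS^{n-1}$ from Euler's identity, the component bijection via rescaling, which the paper gets more directly from Kuiper's theorem) are all fine in spirit.

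But the step you flag as the main obstacle is a genuine gap, and it is precisely where the paper's argument has its teeth. You cannot manufacture a polynomial factorization merely from ``connectedness and two-sidedness'' of $\Sigma$; indeed, for $n\geq 3$ every connected closed hypersurface of $\SS^{n-1}$ is two-sided and separates it into exactly two pieces (Jordan--Brouwer), so topology alone never detects the contradiction. The paper's resolution is geometric: assuming only two components, Lojasiewicz's structure theorem together with the Kuiper equivalence shows that $P^{-1}(0)$ is \emph{homeomorphic} to a hyperplane, and then the homogeneity of $P$ forces $P^{-1}(0)$ to be a \emph{genuine affine hyperplane} through the origin (otherwise the positive and negative half-cones over $S=P^{-1}(0)\cap\SS^{n-1}$ would be distinct and their union would disconnect the neighborhood into more than two pieces). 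Once $P^{-1}(0)=\{x_1=0\}$ as a set, the divisibility $x_1\mid P$ is immediate, so one may take $R=x_1$ and $Q=P/x_1$, at which point Lemma~\ref{L:HarmPoly} forces $Q$ to be constant, contradicting $\deg P\geq 2$. The missing idea in your sketch is this upgrade from a topological hyperplane to a literal linear one; without it there is no mechanism to produce the polynomial factor $R$.
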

\begin{proof}
By Proposition~\ref{P:elem}, $G^{-1}(c)$ is a pure codimension $1$
set without boundary, and thus disconnects $U$. Now suppose that
$U\minus G^{-1}(c)$ has only two components. By Lojasiewicz's structure
theorem~\cite[Section 6.3]{KP02}, $\La:=G^{-1}(c)\cap U$ must be homeomorphic
to a hyperplane, since otherwise $U\minus\La$ would have more than two
connected components.

In what follows we take an analytic chart $x=(x_1,\dots, x_n)$,
thereby embedding $U$ in $\RR^n$. We assume without loss of
generality that the coordinates of the critical point $z$ are $0$
and in these coordinates the metric reads $g_{jk}(x)=\de_{jk}+O(|x|)$. We shall denote by
$P\in\RR[x_1,\dots,x_n]$ the first nonzero homogenous term of the
Taylor expansion of $G-c$ in these coordinates at the critical
point. By the harmonicity of $G$ in $(M\minus\{y\},g)$ and
a theorem of Bers~\cite{Be55}, $P$ satisfies the equation $\De P=0$, i.e., it is harmonic with respect to the Euclidean metric.

Consider the tangent cone $P^{-1}(0)$ of $G$ at $z$~\cite{Ar85}, which is
the union of the limits of secant lines passing through $z$. Proposition~\ref{P:elem} and Remark \ref{RP:elem} show that $P^{-1}(0)$ has pure codimension $1$, and thus
disconnects $\RR^n$. By hypothesis, $0$ is an isolated critical point of $P$, which implies that
\[
\big|\nabla P(x)\big|\geq C|x|^{d-1}\,,
\]
where $d$ is the degree of $P$ and
\[
C:=\inf_{|x|=1}\big|\nabla P(x)\big|>0\,.
\]
Therefore, a well known theorem of Kuiper~\cite{Ku72} shows that $G$ is $C^1$-equivalent to $P$ in $U$, so that
$U\minus\Lambda$ has as many components as $\RR^n\minus P^{-1}(0)$ if $U$ is small enough. Thus the number of connected components of
$U\minus P^{-1}(0)$ is exactly two and $P^{-1}(0)$ is necessarily
homeomorphic to a plane.

We can now prove that $P^{-1}(0)$ is indeed diffeomorphic to a
plane, which implies that $\La$ has a tangent plane at $z$. In order to see this, note that, $P^{-1}(0)$
being homeomorphic to a plane, its intersection $S$ with the unit
sphere $\{x_1^2+\cdots+x_n^2=1\}$ is homeomorphic to $\SS^{n-2}$. Besides, the homogeneity of $P$ implies that
\[
P^{-1}(0)=\big\{\la x:\la\in\RR,\; x\in S\big\}\,.
\]
Hence $P^{-1}(0)$ disconnects $U$ into more than two components unless
\[
\big\{\la x:\la\geq0,\; x\in S\big\}=\big\{\la x:\la\leq0,\; x\in
S\big\}\,,
\]
that is, unless $P^{-1}(0)$ is an affine plane which contains the
origin.

We have thus shown that $\La$ has a tangent plane at $z$, which can be taken as $\{x_1=0\}$ without loss of generality. The division
property of the polynomials then allows us to write $P=QR$, where
$R(x_1,\dots,x_n)=x_1$ and $Q$ is a homogeneous polynomial of
degree $\deg(P)-1$. Lemma~\ref{L:HarmPoly} now implies that $Q$ is
constant, contradicting the fact that $G$ has a critical point at
$z$ and completing the proof of the proposition.
\end{proof}

\begin{remark}
Being a local result, Proposition~\ref{T:regions} holds as well for
harmonic functions and for Green's functions which are not Li--Tam. The technical hypothesis that $z$ is an isolated critical point of $P$ is essential and cannot be dropped. This can be seen, e.g., by considering the harmonic polynomial in Euclidean $3$-space defined by $f(x_1,x_2,x_3):=x_1^2-x_2^2+(x_1^2+x_2^2)x_3-\frac23x_3^3$. Indeed, let $U$ be a small neighborhood of the origin, which is an isolated critical point of $f$. One can readily check that $f^{-1}(0)\cap U$ is homeomorphic to a plane, whereas the tangent cone of $f$ at $0$ is $\{x_1^2-x_2^2=0\}$.
\end{remark}

The local flow of the vector field $\frac{\nabla_g G}{|\nabla_g
G|^2_g}\in\mathfrak X^\om(M\minus(\Cr(G)\cup\{y\}))$ is a basic tool
in the global study of the level sets of $G$. As a matter of fact, if $G$ does
not have any critical points in $U:=G^{-1}((c_1,c_2))$, $U$ is a trivial bundle over $(c_1,c_2)$ and the local
flow $\phi_t$ of the latter vector field provides a $C^\om$
diffeomorphism
\[
\phi_tG^{-1}(c)=G^{-1}(c+t)
\]
whenever $c_1<c,c+t<c_2$. This is an easy consequence of the
fact that all the connected components of $G^{-1}(c)$ are compact,
cf.\ Proposition~\ref{P:elem}. In this sense, one should note the
following obvious result, which makes use of the well known asymptotics for the Green's function at the pole~\cite{GS55}:
\begin{align}
G(x)&=
\begin{cases}
C_n\,\dist_g(x,y)^{2-n}\,\big(1+o(1)\big)\qquad&\text{if }\;n\geq3\,,\\[2mm]
-C_2\,\log\dist_g(x,y)\,\big(1+o(1)\big) &\text{if }\;n=2\,,
\end{cases}\label{asympG}\\[2mm]
\big|\nabla_g G(x)\big|_g&=C'_n\,\dist_g(x,y)^{1-n}\,\big(1+o(1)\big)\,.\label{asympDG}
\end{align}
Here $C_n,C_n'$ are constants and $\lim_{x\to y}o(1)=0$.

\begin{proposition}\label{P:gil}
If $\Cr(G)$ is empty, $M$ is diffeomorphic to $\RR^n$.
\end{proposition}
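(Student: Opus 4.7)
The plan is to trivialize $M\minus\{y\}$ as $\SS^{n-1}\times\RR$ via the normalized gradient flow of $G$, and then to observe that reattaching the pole closes one end of this cylinder into a smooth copy of $\RR^n$. Set $a:=\inf_M G\in[-\infty,+\infty)$. Because $\Cr(G)=\emptyset$, the analytic vector field $X:=\nabla_g G/|\nabla_g G|_g^2$ is nowhere vanishing on $M\minus\{y\}$, and its flow $\phi_t$ satisfies $G\circ\phi_t=G+t$. Combining the gradient asymptotic~\eqref{asympDG} with the Li--Tam property that $G$ tends to a definite limit at every end of $M$ and the strong maximum principle (which forces $G>a$ on $M\minus\{y\}$), the integral curve through each $x_0\in M\minus\{y\}$ is defined on the maximal interval $(a-G(x_0),+\infty)$, accumulating at $y$ as $t\to+\infty$ (the only locus where $G$ blows up) and leaving every compact subset of $M$ as $t\to a-G(x_0)$. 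Consequently, for any regular value $c_0$, setting $N:=G^{-1}(c_0)$, the map
\[
\Phi:N\times(a-c_0,+\infty)\to M\minus\{y\},\qquad \Phi(x,t):=\phi_t(x),
\]
is an analytic diffeomorphism.

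Next I would show that $N\cong\SS^{n-1}$ once $c_0$ is chosen sufficiently large. The decreasing property of Li--Tam Green's functions ensures that $\{G\geq c_0\}\subset B_g(y,r_0)$ as soon as $c_0>\max_{\pd B_g(y,r_0)}G$. In a normal coordinate chart $(\theta,r)\in\SS^{n-1}\times(0,r_0)$ at $y$ the asymptotics~\eqref{asympG}--\eqref{asympDG}, sharpened by the standard parametrix for the fundamental solution on a Riemannian manifold, give
\[
G(\theta,r)=C_n\,r^{2-n}\big(1+o(1)\big),\qquad \pd_rG(\theta,r)=-C_n(n-2)\,r^{1-n}\big(1+o(1)\big)
\]
uniformly in $\theta$ as $r\to 0^+$. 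After possibly shrinking $r_0$ I may assume $\pd_rG<0$ throughout the punctured ball, so that $r\mapsto G(\theta,r)$ is strictly decreasing on $(0,r_0)$ for each $\theta$. The implicit function theorem then produces a unique analytic function $r:\SS^{n-1}\to(0,r_0)$ with $G(\theta,r(\theta))=c_0$, showing that $N$ is the analytic graph of $r(\cdot)$; in particular $N$ is analytically diffeomorphic to $\SS^{n-1}$. Moreover, $D:=\{G\geq c_0\}\cup\{y\}$ corresponds in the chart to $\{(\theta,r):0\leq r\leq r(\theta)\}$, an analytically embedded closed $n$-disk with $y$ in its interior and $\pd D=N$.

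Putting the pieces together, $M$ is the union of the analytic closed $n$-disk $D$ and the half-open collar $\{G\leq c_0\}\cong\SS^{n-1}\times(a,c_0]$ supplied by $\Phi$, glued along their common boundary sphere $G^{-1}(c_0)=\pd D\cong\SS^{n-1}\times\{c_0\}$; this is manifestly diffeomorphic to $\RR^n$. The main delicate point is the passage from the scalar asymptotics~\eqref{asympG}--\eqref{asympDG} to the one-sided bound $\pd_rG<0$ near the pole, i.e.\ the near-radial behavior of $\nabla_gG$, which nevertheless is a standard consequence of the parametrix construction. Once $N\cong\SS^{n-1}$ is known for a single large value $c_0$, the remainder of the argument is a routine flow trivialization followed by a standard gluing.
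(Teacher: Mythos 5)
Your argument is correct, and it recovers the same geometric picture as the paper — the gradient flow trivializes $M\minus\{y\}$ over the level-set sphere near the pole — but the final step differs. You assemble $M$ explicitly as a closed $n$-disk $D$ around the pole glued to the half-open collar $\SS^{n-1}\times(a,c_0]$ furnished by the flow, and then invoke the standard fact that attaching an open collar $\pd W\times[0,1)$ to a compact manifold $W$ with boundary yields a manifold diffeomorphic to $\Int W$. The paper instead writes $M$ as a monotone union of open $n$-balls $B_j=G^{-1}((c_j,\infty))\cup\{y\}$ and appeals to Brown's theorem~\cite{Br61}. Both routes are valid; yours is more explicit and self-contained (no appeal to Brown), while the paper's is terser.

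Two small points worth flagging. First, your claim that the maximal interval of existence of the backward orbit through $x_0$ is exactly $(a-G(x_0),+\infty)$ needs the properness of $G$ on $M\minus\{y\}$, which follows from the Li--Tam decreasing property~(i): $\max_{\pd B_g(y,r)}G\to\inf_M G=a$ as $r\to\infty$, so $G^{-1}([c_1,c_2])$ is compact for every $a<c_1\le c_2<\infty$. You allude to this via property~(ii), but properness is really what rules out the orbit escaping at a time $T>a-G(x_0)$ with $G$ tending to some intermediate limit. Second, the assertion that $N\cong\SS^{n-1}$ does require the one-sided estimate $\pd_r G<0$ rather than just the scalar asymptotics~\eqref{asympG}--\eqref{asympDG}; you correctly note this comes from the parametrix, though the paper's phrasing (``Eq.~\eqref{asympG} shows that the level sets $G^{-1}(c)$ are diffeomorphic to $\SS^{n-1}$'') glosses over the same point. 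Neither is a gap, just a place where a referee would ask for a sentence more.
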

\begin{proof}
Eq.~\eqref{asympG} shows that the level sets $G^{-1}(c)$ are
diffeomorphic to $\SS^{n-1}$ for sufficiently large $c$. As there
are no critical points in $M$, all the level sets of $G$ must be
diffeomorphic to $\SS^{n-1}$, and thus $M$ is the monotone union of
the balls
\[
B_j:=G^{-1}((c_j,\infty))\cup\{y\}\,.
\]
Here $c_j$ is a decreasing sequence in $(\inf G,\infty)$ converging to $\inf G$. A theorem of Brown now ensures~\cite{Br61} that
$M$ is then diffeomorphic to $\RR^n$.
\end{proof}

\begin{remark}
For future reference, let us mention that Proposition~\ref{P:gil} obviously holds true as well when $(M,g)$ is of class $C^\infty$.
\end{remark}

Further information on the level sets of $G$ can be extracted from the trajectories of $\nabla_g G$. Since this
vector field is not complete, and in fact is not even defined at the pole
$y$, it is convenient to introduce a global $C^1$ vector field
$X\in\mathfrak X^1(M)$ which has the same (unparametrized) integral
curves as $\nabla_g G$. We start by taking a function
$\varphi\in C^1(M)$ such that $\varphi(x)=1$ if $\dist_g(x,y)>\ep$
and
\[
\lim_{x\to y}\frac{\varphi(x)}{\dist_g(x,y)^n}=1\,.
\]
We assume that $\varphi$ does not vanish in $M\minus\{y\}$ and $\ep$ is
a small enough positive number. The asymptotics~\eqref{asympG}
and~\eqref{asympDG} show that the complete vector field
\begin{equation}\label{X}
X:=\frac{\varphi\nabla_g G}{\sqrt{1+\varphi^2|\nabla_g G|_g^2}}
\end{equation}
is of class $C^1$ in $M$ and analytic in
$M\minus\overline{B_g(y,\ep)}$. By construction, $X$ does not
vanish but at $\Cr(G)\cup\{y\}$, and $y$ is a local attractor.

We will denote by $\al(x)$ and $\om(x)$ the $\al$- and
$\om$-limit sets of a point $x$ along the flow of $X$. Given a point
$z\in\Cr(G)$, we define its stable and unstable sets as
\begin{align*}
W^s(z)&:=\big\{x\in M:\om(x)=z\big\}\,,\\
W^u(z)&:=\big\{x\in M:\al(x)=z\big\}\,.
\end{align*}
Let us also introduce the notation
\begin{equation}\label{eqD}
D:=\big\{x\in M: \om(x)=y\big\}
\end{equation}
for the the {\em basin of attraction} of $y$. It is
easy to check that $D$ is diffeomorphic to $\RR^n$~\cite{EP07}, while Proposition~\ref{P:gil} shows
that $M\minus D$ is nonempty whenever $M$ has nontrivial topology. These objects, whose relationship with the critical set of $G$ is laid bare in the following theorem, will play a fundamental role in the proof of Theorem~\ref{T:dim2}.

\begin{theorem}\label{T:boundary}
$\pd D=M\minus D=\bigcup\limits_{z\in\Cr(G)} W^s(z)$.
\end{theorem}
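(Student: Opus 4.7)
The plan is to establish the chain
\[
\textstyle\bigcup_{z\in\Cr(G)} W^s(z)\ \subseteq\ M\setminus D\ \subseteq\ \partial D\ \subseteq\ M\setminus D,
\]
with the last inclusion being immediate once one verifies that $D$ is open. For that, the asymptotics \eqref{asympG}--\eqref{asympDG} and the cutoff $\vp$ built into \eqref{X} make $y$ a local attractor of $X$: a small geodesic ball $U=B_g(y,\rho)$ lies in $D$, and $D=\bigcup_{t\leq 0}\phi_t(U)$ is a union of open sets. The first inclusion is immediate: if $x\in W^s(z)$ with $z\in\Cr(G)$, then $\omega(x)=\{z\}\neq\{y\}$ (the pole $y$ is not a critical point of $G$), so $x\notin D$.

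For the substantive inclusion $M\setminus D\subseteq\bigcup_z W^s(z)$, fix $x\in M\setminus D$ and note that
\[
\DD{}{t}G(\phi_t(x))\,=\,\frac{\vp\,|\nabla_g G|_g^2}{\sqrt{1+\vp^2|\nabla_g G|_g^2}}\,\geq\, 0,
\]
so $G$ is monotonically nondecreasing along the orbit. If $G(\phi_t(x))$ were unbounded above, the asymptotics \eqref{asympG} would force the orbit to enter every neighborhood of $y$, and the local attractor property would give $\omega(x)=y$, contradicting $x\notin D$. Hence $L:=\lim_{t\to\infty} G(\phi_t(x))$ is finite; property (i) of Li--Tam Green's functions together with Proposition~\ref{P:elem} then imply that the orbit lies in a set with compact connected components, so $\omega(x)$ is nonempty, compact, connected, and contained in $G^{-1}(L)$. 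Since $X$ is positively proportional to $\nabla_g G$ on $M\setminus\{y\}$ and $\omega(x)$ is flow-invariant with $G$ constant on it, $\nabla_g G\equiv 0$ on $\omega(x)$, whence $\omega(x)\subseteq\Cr(G)$. Finally, the real analyticity of $G$ makes the Lojasiewicz gradient inequality available, which forces the $\omega$-limit of the bounded orbit of the reparametrized gradient $X$ to reduce to a single point $z\in\Cr(G)$; thus $x\in W^s(z)$.

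The nontrivial remaining inclusion $M\setminus D\subseteq\partial D$ — equivalently, $\Int(M\setminus D)=\emptyset$ — I would prove by first showing $\Cr(G)\subseteq\overline D$ and then propagating along backward flows. The local attractor property again forces $L_{\max}:=\sup_{z\in\Cr(G)}G(z)<\infty$ (critical points stay outside a neighborhood of $y$, and Li--Tam gives $G$ bounded there), and every orbit lying in the superlevel $G^{-1}((L_{\max},\infty))$ must terminate at $y$ for want of a critical sink; thus $G^{-1}((L_{\max},\infty))\subseteq D$. Setting $A:=\{z\in\Cr(G):z\notin\overline D\}$ and $c^*:=\sup_{z\in A}G(z)\leq L_{\max}$, I would argue by descending induction on critical values: for any $z$ with $G(z)\geq c^*$, harmonicity of $G$ (or Proposition~\ref{T:regions} in the isolated case) produces a regular point $x''$ arbitrarily close to $z$ with $G(x'')>G(z)$; either $x''\in D$ directly, or $x''\in W^s(z')$ with $G(z')>c^*$, forcing $z'\in\overline D$ by the inductive assumption. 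In the latter case $\phi_t(x'')\to z'$, so a neighborhood of $z'$ in $D$ can be pulled back by the diffeomorphism $\phi_{-t}$ (regular on a small punctured neighborhood of $z'$) to produce $D$-points arbitrarily close to $x''$, hence to $z$. The case $G(z)=c^*$ is handled by taking a limit of $A$-accumulating critical points; Sard's theorem ensures the critical values form a measure-zero set so this descending induction terminates. Once $\Cr(G)\subseteq\overline D$ is established, the same backward-flow trick applied to any $x\in W^s(z)$ yields $x\in\overline D$, completing the proof.

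The main obstacle is the third step. The Lojasiewicz convergence used in Step~2, though a standard consequence of the gradient inequality, is essential because $\Cr(G)$ may have positive-dimensional components. The descending induction in Step~3 requires genuine care when critical values accumulate, and the backward-propagation from $z'$ to $x''$ depends on the regularity of $x''$, which I would secure by choosing $x''$ generic in the superlevel $\{G>G(z)\}$ using density of regular points.
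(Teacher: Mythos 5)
Your first three inclusions are sound and track the paper's argument reasonably closely: openness of $D$ from the attractor structure at $y$, the trivial inclusion $\bigcup_z W^s(z)\subseteq M\setminus D$, and the identification $M\setminus D\subseteq\bigcup_z W^s(z)$ via monotonicity of $G$ along orbits, a boundedness argument, and a Lojasiewicz-type convergence to a single critical point. The paper cites Moussu for the single-point convergence rather than invoking the gradient inequality directly, but this is the same mechanism.

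The gap is in your treatment of $M\setminus D\subseteq\partial D$, i.e.\ $\Int(M\setminus D)=\emptyset$. Two steps fail. First, the descending ``induction'' on $c^*:=\sup_{z\in A}G(z)$ requires the supremum to be attained, or at least that one can isolate a well-ordered tail of critical values. When critical values of $G$ accumulate — which you cannot exclude, as $\Cr(G)$ may have positive-dimensional components and the restriction of $G$ to them need not be locally constant-free in a controlled way — picking $z\in A$ with $G(z)$ merely \emph{close to} $c^*$ only yields $z'$ with $G(z')>G(z)$, which does not force $G(z')>c^*$, so $z'$ may still lie in $A$ and the argument does not close. Invoking Sard is a red herring: measure zero does not imply discreteness or finiteness of the critical value set. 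Second, and more fundamentally, the ``backward-flow trick'' is not valid as stated. From $z'\in\overline D$ and $\phi_t(x'')\to z'$ you cannot conclude $x''\in\overline D$: knowing that $D$-points accumulate at $z'$ does not place any $D$-point near $\phi_t(x'')$ for finite $t$, and $\phi_{-t}$ applied to a generic $D$-point near $z'$ need not land near $x''$ — near a saddle the backward flow expands along the stable directions, so the image can be flung far from the particular stable orbit through $x''$. Indeed in your own setting you have just exhibited a ball $B(z,\rho)$ disjoint from $\overline D$ containing $x''$, so $x''\notin\overline D$ is \emph{given}; your trick purports to derive the opposite, which should have signaled that something is off.

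The paper's proof of $\Int(M\setminus D)=\emptyset$ avoids all of this by an entirely different and much shorter idea: $M\setminus D$ is closed and flow-invariant, and property (i) of Li--Tam Green's functions implies $G|_{M\setminus D}$ attains its maximum at some $x_0$; invariance makes $x_0$ a local attractor of the flow within $M\setminus D$; if $\Int(M\setminus D)\neq\emptyset$ this produces a set of positive volume contracting onto $x_0$, contradicting the fact that the local flow of $\nabla_g G$ preserves the Riemannian volume because $G$ is harmonic. This volume-preservation (or, equivalently, no-interior-maximum) argument is the key ingredient you are missing; it replaces your combinatorial descent over critical values with a single global fact about divergence-free gradient fields.
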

\begin{proof}
Clearly $D$ is an open set. As the function $G$ is decreasing by
Property~(i) of Li--Tam Green's functions, the restriction of $G$ to
the closed set $M\minus D$ must attain its maximum at some point
$x$. The invariance of $D$ under the flow of $X$ implies that of
$M\minus D$, which in turn ensures that $x$ is a local attractor
of the flow of $X$ in $M\minus D$. If the interior of $M\minus D$
were nonempty, this contradicts the harmonicity of $G$, i.e., the
fact that the local flow of $\nabla_g G$ preserves volume. Hence it follows that $M\minus
D=\pd D$.

As the function $G$ is analytic in $M\minus\{y\}$, the $\al$- and
$\om$-limit sets of a point $x\in M$ are either empty or a unique
point in $\Cr(G)\cup \{y\}$~\cite{Mo97}. Clearly
$W:=\bigcup_{z\in\Cr(G)} W^s(z)$ is contained in $M\minus D$: as $y$
is a local attractor, it cannot be an $\al$-limit, so necessarily
$\al(x)\in M\minus D$ for all $x\in W$.

To prove the converse implication, we shall show that the
$\om$-limit of any $x\in M\minus D$ necessarily belongs to
$\Cr(G)$. Obviously it suffices to consider the case
$x\not\in\Cr(G)$. Denoting by $\phi_t$ the flow of $X$, we have
seen that $\phi_tx$ must either tend to a critical point of $G$ or
go to infinity as $t\to\infty$. To preclude the latter possibility, let us compute
the derivative of $G$ along the flow of $X$:
\[
\cL_XG=g(X,\nabla_gG)=\frac{\varphi|\nabla_g
G|_g^2}{\sqrt{1+\varphi^2|\nabla_g G|_g^2}}>0\qquad\text{in
}M\minus(\Cr(G)\cup \{y\})\,.
\]
As $\frac{\pd}{\pd t}G(\phi_tx)=(\cL_XG)(\phi_tx)$ is strictly
positive and $G$ is decreasing in the sense of Properties~(i)
and~(ii), this ensures that the integral curve of $X$ passing
through $x$ does not go to infinity, so that $x$ necessarily lies in $W$.
\end{proof}

\begin{remark}
  Theorem~\ref{T:boundary} and its proof are also valid for $n=2$.
\end{remark}

\begin{example}
  Let us now discuss an example where the basin of attraction and the
  stable manifolds of the critical points of the Green's function can be
  computed explicitly. Consider the cylinder $M=\RR\times (\RR/2\pi\ZZ)$
  endowed with its standard flat metric. A (nonpositive) Green's
  function is given by~\cite{EP07}
\[
G(x,\theta)=-\frac1{4\pi}\log(\cosh x-\cos \theta)\,,
\]
where $(x,\theta)\in\RR\times(\RR/2\pi\ZZ)$ and the pole is located at $(0,0)$. It is not difficult to see that this Green's
function is of Li--Tam type. The basin of attraction of the gradient
field
\[
\nabla_gG=-\frac{\sinh x\,\pd_x+\sin\theta\,\pd_\theta}{4\pi(\cosh
x-\cos\theta)}
\]
can be readily shown to be the complement of the invariant line
$L:=\{\theta=\pi\}$. Clearly $G$ has a unique critical point
$z_0:=(0,\pi)$, and its stable set is precisely $L$. The level set
$G^{-1}(c)$ is diffeomorphic to a circle for any $c$ greater than the
critical value $c_0:=-\frac{1}{4\pi}\log 2$, whereas for $c<c_0$ the level
set $G^{-1}(c)$ is composed of two disjoint closed curves.
\end{example}

It stems from Proposition~\ref{P:gil} that $\Cr(G)$ is nonempty
whenever $M$ has nontrivial topology. When $M$ is diffeomorphic to
$\RR^n$, Theorem~\ref{T:boundary} imposes a topological
restriction on the possible critical sets of $G$, namely that
$M\minus\bigcup_{z\in\Cr(G)} W^s(z)$ must still be diffeomorphic
to $\RR^n$. When $M$ is diffeomorphic to a plane, one can prove that this condition is incompatible with the harmonicity of $G$ in $M\minus\{y\}$, which implies that $\Cr(G)=\emptyset$ (cf.\ e.g.~\cite{Wa50}).

In the rest of this section we shall further analyze the critical set $\Cr(G)$ 
under the assumption that $M$ is diffeomorphic to $\RR^n$. Notice, first of
all, that the contractibility of $M$ and
$M\minus\pd D$ together with the harmonicity of $G$ impose strong
restrictions on the local structure of $\pd D$. Some of these
restrictions can be qualitatively understood using only that $G$ is
analytic~\cite{Mo97,KP02}; however, the restrictions arising from the fact that
$G$ is harmonic as well play a crucial role in the analysis and are usually much harder to
characterize~\cite{Go05}.

In order to illustrate this fact, we consider next the simplest
choice for $\pd D$ which would be a priori compatible with the obvious topological
obstructions, namely that $\pd D$ includes an isolated critical
point whose stable set is a half-line. Certainly $D=M\minus\pd D$
would be diffeomorphic to $\RR^n$ for this cusp-like critical point, as required. Nonetheless, Proposition~\ref{T:regions} can be used to generically rule out this possibility:

\begin{proposition}\label{P:halfline}
Let $z$ be an isolated critical point of $G$ which is also an isolated critical point of the first nonzero homogeneous term of the Taylor expansion of $G$ at $z$. Then $W^s(z)$ and $W^u(z)$ are nonempty and not
homeomorphic to the half-line $[0,+\infty)$.
\end{proposition}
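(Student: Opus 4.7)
The plan is to apply Proposition~\ref{T:regions}, which for a small neighborhood $U$ of $z$ with $c:=G(z)$ guarantees that $U\setminus G^{-1}(c)$ has at least three connected components. Since $G$ is a non-constant harmonic function on $U$ and $z$ cannot be a local extremum, the maximum principle forces both $U^-:=U\cap\{G<c\}$ and $U^+:=U\cap\{G>c\}$ to be nonempty; in particular, at least one of them must contain at least two connected components.

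The second step is to produce, in each connected component $V$ of $U^-$ (and symmetrically of $U^+$), a point $p\in V$ whose forward (respectively backward) orbit under the flow of $X$ stays in $V$ for all positive (respectively negative) time. This is a Wa\.{z}ewski-type assertion: since $\cL_XG>0$, the function $G$ is a strict Lyapunov function for $X$ on $V\setminus\{z\}$, and the exit set $\partial V\cap U\setminus\{z\}\subset G^{-1}(c)\setminus\{z\}$ is crossed transversally by $X$. If every forward orbit in $V$ eventually left $V$, the resulting continuous exit map would furnish a retraction of a portion of $V$ near $z$ onto the codimension-one exit set, contradicting the fact that $z$ is an interior boundary accumulation point of $V$. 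Once such a $p$ is found, \L ojasiewicz's theorem on the convergence of bounded trajectories of analytic gradient flows forces $\phi_t(p)\to z$, so $p\in W^s(z)$; the time-reversed argument produces a point of $V\cap W^u(z)$ for each component $V$ of $U^+$. In particular $W^s(z)$ and $W^u(z)$ are both nonempty.

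To conclude that neither is homeomorphic to $[0,+\infty)$, suppose first that $U^-$ has two distinct components $V_1,V_2$. The points $p_i\in V_i\cap W^s(z)$ produced above lie in disjoint open subsets of $U$, each having $z$ in its closure, so $V_1\cap W^s(z)$ and $V_2\cap W^s(z)$ are disjoint nonempty relatively open subsets of $W^s(z)\setminus\{z\}$, whence the latter is disconnected; since $[0,+\infty)\setminus\{0\}$ is connected, $W^s(z)$ cannot be homeomorphic to $[0,+\infty)$. The identical argument disposes of $W^u(z)$ whenever $U^+$ has at least two components. In the asymmetric case, where only one side of $G^{-1}(c)$ has multiple components in $U$, I would invoke the $C^0$-equivalence between $G-c$ and its leading harmonic homogeneous term $P$ (Kuiper's theorem, already used in the proof of Proposition~\ref{T:regions}) and analyze the spherical restriction $P|_{\SS^{n-1}}$ on the single-component nodal domain: each critical point of $P|_{\SS^{n-1}}$ there generates a radial orbit of the corresponding stable or unstable set of $\nabla P$ at $0$, and Morse-theoretic considerations on the nodal domain as a manifold with boundary still produce at least two such branches accumulating at $0$, which then transfer back to $G$ via Kuiper's conjugacy.

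The hard part will be executing the Wa\.{z}ewski retraction in the second step rigorously, since the level set $G^{-1}(c)$ fails to be smooth at $z$; the \L ojasiewicz structure theorem must be invoked to control its singularities and make the exit map argument precise. A secondary technical difficulty is the asymmetric last case, where one has to transfer the count of radial branches from the explicit homogeneous model $P$ back to $G$ using only the $C^0$-conjugacy supplied by Kuiper's theorem.
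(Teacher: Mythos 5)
Your argument for the ``symmetric'' case (when both $U^-$ and $U^+$ have at least two components) is sound and essentially reproduces the paper's use of Wa\.{z}ewski's theorem to plant a piece of $W^s(z)$ (resp.\ $W^u(z)$) in each component of $U^-$ (resp.\ $U^+$). But the asymmetric case --- which you correctly flag as the hard part --- is exactly where the paper's proof differs and where your proposed fix does not go through.

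The difficulty is that Kuiper's theorem gives a $C^1$ conjugacy of the \emph{functions} $G-c$ and $P$, not a conjugacy of their \emph{gradient flows}. The paper is explicit on this point in Section~\ref{S:Morse}: ``the gradients of the function and of its quadratic part do not need to be $C^1$-conjugate even when the functions are $C^\om$-conjugate since the Hartman--Grobman theorem only grants topological conjugacy of the gradients.'' So the radial separatrices you would extract for the model $\nabla P$ on $\SS^{n-1}$ cannot simply be ``transferred back'' along Kuiper's homeomorphism to orbits of $\nabla_g G$; the conjugacy preserves level sets of $G$, not its flow lines. Moreover, even for the homogeneous model the Morse-theoretic count you sketch is not obviously sufficient: if the relevant nodal domain of $P|_{\SS^{n-1}}$ happens to be a disk, $P|_{\SS^{n-1}}$ may have a single critical point there, giving a priori only one radial separatrix; extra orbits of $\nabla P$ accumulating at $0$ exist but need not be radial, and accounting for them requires a separate argument.

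The paper handles the asymmetric case ($N=3$, with $G<c$ in a single region $R_1$ and $G>c$ in $R_2\cup R_3$) by a different mechanism entirely: the flow of $\nabla_g G$ induces a diffeomorphism
\[
\overline\Phi:\bigl(G^{-1}(c-\ep)\cap U\bigr)\setminus W^s(z)\longrightarrow\bigl(G^{-1}(c)\cap U\bigr)\setminus\{z\}\,.
\]
If $W^s(z)$ were a half-line, the left-hand side would be a connected $(n-1)$-manifold minus a point, hence connected (as $n\geq3$). But the Lojasiewicz structure theorem decomposes $G^{-1}(c)\cap U$ into $\{z\}$ together with $m$ open codimension-$1$ analytic sheets; either $m\geq2$, in which case the right-hand side is disconnected (contradiction with $\overline\Phi$ a diffeomorphism), or $m=1$, in which case $G^{-1}(c)\cap U$ is homeomorphic to $\RR^{n-1}$ and $N$ cannot equal~$3$. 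This holonomy-plus-stratification argument is what replaces the Kuiper transfer you propose, and it is the genuinely new idea you would need to supply to close the gap.
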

\begin{proof}
Let $c:=G(z)$ be the critical value. By
Proposition~\ref{T:regions}, $G^{-1}(c)$ disconnects a small
neighborhood $U$ of $z$ into $N\geq 3$ regions, which we shall call $R_j$ ($j=1,\dots, N$). For each
$j$, the gradient field $\nabla_g G$ points either inwards or
outwards on $\pd R_j\minus\{z\}$, that is, if $\nu_j$ denotes the
outward normal field of $\pd R_j$ then $g(\nabla_g G,\nu_j)$ does not change sign in $\pd R_j\minus\{z\}$, so it is everywhere positive or negative. Since $G(\pd R_j)=c$ for all $j$, the pointing
directions of $\nabla_g G$ in neighboring regions must alternate,
that is,
\[
g\big(\nabla_g G(x_j),\nu_j(x_j)\big)\, g\big(\nabla_g
G(x_k),\nu_k(x_k)\big)<0
\]
for all $x_j\in\pd R_j,\;x_k\in\pd R_k$ whenever $(\pd R_j\cap\pd
R_k)\minus\{z\}\neq\emptyset$ and $j\neq k$.

Therefore, if $N\geq 4$ there are at least two regions, say $R_1$
and $R_2$, in which $g(\nabla_g G,\nu_j)|_{\pd
R_j\minus\{z\}}>0$ ($j=1,2$). By Wazewski's theorem~\cite[Theorem 3.1]{Ha82},
in each region $R_j$ ($j=1,2$) there exists an integral curve of
$\nabla_g G$ whose $\om$-limit is $z$, and thus $W^s(z)$ cannot be
homeomorphic to a half-line.

From now on we shall assume that $N=3$. In this case the number of regions with $g(\nabla_g G,\nu_j)>0$ on
their boundaries is either one or two. If it is two, the previous
argument applies here as well, so we can restrict ourselves to the
case where $g(\nabla_g G,\nu_1)|_{\pd R_1\minus\{z\}}>0$ and
$g(\nabla_g G,\nu_j)|_{\pd R_j\minus\{z\}}<0$ for $j=2,3$.

The flow of $\nabla_g G$ defines a map $\Phi: G^{-1}(c-\ep)\cap
U\to G^{-1}(c)\cap U$ for sufficiently small $\ep>0$, and by the analyticity of $G$ it is
standard that its restriction
\[
\overline\Phi: \big(G^{-1}(c-\ep)\cap U\big)\minus W^s(z)\to
\big(G^{-1}(c)\cap U\big)\minus\{z\}
\]
is a diffeomorphism. As $G>c$ in $R_2\cup R_3$, the submanifold
$G^{-1}(c-\ep)\cap U$ is a codimension 1 connected subset of
$R_1$. If $W^s(z)$ is homeomorphic to a half-line, the
intersection $W^s(z)\cap G^{-1}(c-\ep)$ consists of one point, and
therefore $(G^{-1}(c-\ep)\cap U)\minus W^s(z)$ is connected.

As $z$ is an isolated critical point and $G^{-1}(c)$ has pure codimension 1, Lojasiewicz's structure theorem~\cite[Section 6.3]{KP02} guarantees that $G^{-1}(c)\cap U$ is given by the disjoint union of $\{z\}$ and $m$ open, analytic submanifolds of codimension 1. Therefore, if $m\geq2$ it trivially follows that $(G^{-1}(c)\cap U)\minus\{z\}$ is disconnected, contradicting the fact that $\overline\Phi$ is a diffeomorphism.. If $m=1$, $G^{-1}(c)\cap U$ is homeomorphic to $\RR^{n-1}$, contradicting the fact that $N=3$ and proving the claim for $W^s(z)$. All the arguments being local, the proof
obviously applies to $W^u(z)$ as well once we
change $G$ by $-G$.
\end{proof}
\begin{remark}\label{R:halfline}
Proposition~\ref{P:halfline} asserts that, generically, a connected component of
$\pd D$ cannot be homeomorphic to a half-line. The next simplest,
a priori admissible possibility would be to have a component
homeomorphic to a half-plane, as happens when one has a saddle
connection between two nondegenerate critical points of Morse
indices $1$ and $2$. 
\end{remark}

\section{Finitely generated surfaces and axisymmetric manifolds}
\label{S:surfaces}

Unless otherwise stated, in this section we shall assume that $G$ is a Li--Tam Green's function on a Riemannian surface $(M,g)$ with finitely generated fundamental group. Our objective is to prove Theorem~\ref{T:dim2}, which will hinge on a thorough analysis of the set $\pd D$  introduced in~\eqref{eqD}. The key point is to show that the critical set of $G$ is finite; once this is known, one can get the desired topological upper bound using local arguments and Hopf index theory. 

Let us begin by introducing some notation. Let $z$ be a critical point of $G$ and consider a small neighborhood $U\ni z$. We can assume that $U$ is covered by a single normal chart~\cite{SY95} $\psi:U\to\RR^2$ with $\psi(z)=(0,0)$. We also use the notation  $x=(x_1,x_2)$ for these normal coordinates.
The unique continuation theorem~\cite{Ar57} implies that the function $G-G(z)$ can vanish only up to finite order at $z$. By a theorem of Bers~\cite{Be55}, there exists a homogeneous polynomial $h_m$ of degree $m\geq2$ such that  
\begin{subequations}\label{new}
\begin{align}
G\circ\psi^{-1}(x)-G(z)=h_m(x)+O(|x|^{m+1})\,,\\
\psi_*\nabla_gG(x)=\nabla h_m(x)+O(|x|^m)\,,\label{eq2}
\end{align}
\end{subequations}
where, as usual, $\nabla$ stands for the Euclidean gradient operator in the coordinates $(x_1,x_2)$. Furthermore, one has that $\De h_m=0$, which implies that the origin is an isolated critical point of $h_m$. It follows from this discussion that all the critical points of $G$ are necessarily isolated.   

A simple but crucial property is presented in the following lemma,
which furnishes a local description of the integral curves of
$\nabla_g G$ in a neighborhood of a critical point. Here and in what follows, the Taylor expansion of $G$ at $z$ is performed in the coordinates $(x_1,x_2)$. This result is analogous to Cheng's local analysis of the nodal set of the eigenfunctions on a surface~\cite{Ch76}, our proof using a blow-up argument instead of the Kuiper--Kuo theorem.

\begin{lemma}\label{L:halfb}
Let $z$ be a critical point of $G$, and let $m\geq2$ be
the degree of the lowest nonzero homogeneous term in the Taylor
expansion of $G$ at $z$. Then there exists a neighborhood $U$ of
$z$ such that $W^s(z)\cap U$ and $W^u(z)\cap U$ are homeomorphic
to $\{\zeta\in\CC:\zeta^m\in[0,1)\}$.
\end{lemma}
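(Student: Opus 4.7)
My plan is to exploit the two-dimensional feature that the orbits of $\nabla_g G$ coincide with the level curves of a local harmonic conjugate $\tilde G$ of $G$, and to read off the $m$-fold star structure of $W^s(z)$ from the Bers--Cheng description of the nodal set of $\tilde G$ at $z$.

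First, since every homogeneous harmonic polynomial of degree $m$ on $\RR^2$ is proportional to $\mathrm{Re}(z^m)$ in some orthonormal frame, composing the normal chart $\psi$ with a suitable rotation lets me assume $h_m(x) = \alpha\, r^m\cos(m\theta)$ with $\alpha>0$, whose Euclidean harmonic conjugate is $\tilde h_m = \alpha\, r^m\sin(m\theta)$. Since $G$ is $g$-harmonic on the simply connected neighborhood $U$ (which I may shrink to exclude the pole $y$), the $1$-form $\ast_g\,dG$ is closed, because $d(\ast_g\,dG) = \De_g G\,\dd V_g = 0$, and hence exact on $U$; fix a primitive $\tilde G$ with $\tilde G(z) = 0$. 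Then $\tilde G$ is itself $g$-harmonic (using $\ast_g\ast_g = -\mathrm{id}$ on $1$-forms), $\nabla_g\tilde G$ is pointwise $g$-orthogonal to $\nabla_g G$, and in particular the level sets of $\tilde G$ are exactly the orbits of $\nabla_g G$. A leading-order computation in normal coordinates, where $g_{ij}=\delta_{ij}+O(|x|^2)$ so that $\ast_g$ agrees with the Euclidean Hodge star up to $O(|x|^2)$ terms on $1$-forms, combined with \eqref{eq2}, yields $\tilde G\circ\psi^{-1}(x) = \tilde h_m(x) + O(|x|^{m+1})$; in particular $\tilde G$ vanishes at $z$ to order exactly $m$ and has an isolated critical zero there.

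Applying the Bers--Cheng local structure theorem~\cite{Be55,Ch76} to the $g$-harmonic function $\tilde G$ at its critical zero of order $m$, the set $\{\tilde G = 0\}$ in a sufficiently small neighborhood of $z$ is the disjoint union of $\{z\}$ and $2m$ smooth embedded analytic arcs, each tangent at $z$ to one of the $2m$ nodal rays $\theta = k\pi/m$, $k=0,\dots,2m-1$, of $\tilde h_m$. Each of these arcs is an orbit of $\nabla_g G$ accumulating at $z$. Evaluating $G - G(z) = \alpha\, r^m\cos(m\theta) + O(r^{m+1})$ along each arc: the $m$ arcs tangent to $\theta = (2k+1)\pi/m$, where $\cos(m\theta) = -1$, lie in $\{G<G(z)\}$, so the monotonicity of $G$ along the gradient flow forces the forward orbit to accumulate at $z$, and these arcs constitute $W^s(z)\cap U$; the remaining $m$ arcs tangent to $\theta = 2k\pi/m$ lie in $\{G>G(z)\}$ and analogously form $W^u(z)\cap U$. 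The resulting set is the one-point union of $m$ half-open arcs meeting at $z$, which is manifestly homeomorphic to $\{\zeta\in\CC : \zeta^m\in[0,1)\}$.

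The main obstacle I anticipate is justifying that the Bers--Cheng local description, originally stated in~\cite{Ch76} for eigenfunctions of the flat Laplacian, applies verbatim to the $g$-harmonic function $\tilde G$ on a real analytic Riemannian surface; this reduces to the two ingredients already in hand, namely the order-$m$ vanishing of $\tilde G$ at $z$ and the transversality of the nodal rays of $\tilde h_m$, which is immediate from the explicit form $\tilde h_m = \alpha\, r^m\sin(m\theta)$. An alternative route closer to the paper's hint would perform a direct blow-up of the vector field: the rescaled fields $X_\lambda(x) := \lambda^{1-m}\psi_*\nabla_g G(\lambda x)$ converge in $C^1$ on any fixed annulus to $\nabla h_m$ by \eqref{eq2}, and in each narrow wedge about an incoming radial ray of $\nabla h_m$ a Wazewski-type argument produces a trajectory of $X_\lambda$ limiting to the origin; uniqueness of the trajectory in each wedge is forced by the volume-preservation of the $g$-harmonic gradient flow, which precludes $W^s(z)$ from having nonempty interior.
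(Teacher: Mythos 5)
Your proof is correct, and the main line of argument is genuinely different from the paper's. The paper performs a polar blow-up of the vector field $\psi_*\nabla_g G$ at $z$: after writing $h_m\circ\phi(r,\theta)=Cr^m\cos m\theta$ one gets the blown-up system $\dot r = r\cos m\theta + O(r^2)$, $\dot\theta = -\sin m\theta + O(r)$, whose $2m$ rest points $(0,k\pi/m)$ have diagonal Jacobians $\mathrm{diag}\big((-1)^k,(-1)^{k+1}\big)$ and are thus hyperbolic saddles; blowing back down exhibits $2m$ hyperbolic sectors with $2m$ separatrices meeting at equal angles $\pi/m$, from which the $m$-star structure of $W^s(z)$ and $W^u(z)$ follows directly. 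Your route instead passes to the local harmonic conjugate $\widetilde G$, observes that orbits of $\nabla_g G$ are level curves of $\widetilde G$, and reads off the separatrices as the $2m$ nodal arcs of $\widetilde G$ at its order-$m$ zero via the Bers--Cheng description, sorting them into $W^s$ or $W^u$ by the sign of $G-G(z)$ on each arc. Both proofs are intrinsically two-dimensional, but they live in different toolkits: the paper's is self-contained and purely dynamical (blow-up plus hyperbolicity, also giving the local index $1-m$ for free, which it then feeds into the Euler-characteristic count), while yours imports the standard nodal-set theorem as a black box but avoids any vector-field desingularization and makes the orthogonal-trajectory picture explicit. The one inclusion you handle somewhat tersely is $W^s(z)\cap U\subseteq\{\widetilde G=0\}$; it is worth spelling out that $\widetilde G$ is a first integral of the gradient flow, so any orbit converging to $z$ must lie on the zero level of $\widetilde G$, which rules out stable/unstable sets of positive measure without a separate appeal to volume preservation. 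Also, in a normal chart the paper only records $g_{jk}=\delta_{jk}+O(|x|)$ (not $O(|x|^2)$), but that is already enough: it gives $\ast_g=\ast_0+O(|x|)$ on $1$-forms, hence $\widetilde G=\widetilde h_m+O(|x|^{m+1})$ exactly as you need. Your second, ``alternative'' sketch via blow-up in wedges is essentially the paper's own proof.
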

\begin{proof}
In the proof of this lemma we shall use the notation introduced in Eqs.~\eqref{new} and the map
$\phi:\RR^+\times\SS^1\to\RR^2$ defined by
$\phi(r,\theta):=(r\,\cos\theta,r\,\sin\theta)$. As $h_m$ is
harmonic, it readily follows that
$h_m\circ\phi(r,\theta)=C\,r^m\cos (m\theta-\theta_0)$ for some constants $C$ and $\theta_0$. There is obviously no loss of generality in setting $\theta_0=0$.

We now define the polar blow up~\cite{SS80} of $\psi_*\nabla_gG$
at the singularity as
\[
X(r,\theta):=\frac1{Cm\,r^{m-2}}\,\phi^*\psi_*\nabla_gG(r,\theta)\,,
\]
so that, taking into account Eq.~\eqref{eq2}, the blown-up trajectories are precisely given by
\begin{subequations}\label{dotrte}
\begin{align}
\dot r&=r\,\cos m\theta+O(r^2)\,,\\
\dot\theta&=-\sin m\theta+O(r)\,.
\end{align}
\end{subequations}
The blown-up critical points are thus $(0,\theta_k)$,
with $\theta_k:=k\pi/m$ and $k=1,\dots, 2m$. The Jacobian matrix
of $X$ at $(0,\theta_k)$ is
\begin{equation}\label{Jac}
DX(0,\theta_k)=\left(%
\begin{array}{cc}
   (-1)^k& 0 \\
  0 & (-1)^{k+1} \\
\end{array}%
\right)\,,
\end{equation}
so these critical points are hyperbolic saddles. By blowing down,
we immediately find that a deleted neighborhood of $z$ in $U$
consists of $2m$ hyperbolic sectors of the original vector field
$\nabla_gG$, and that each pair of consecutive separatrices form an
angle of $\pi/m$. By~\eqref{Jac} both
$W^s(z)\cap U$ and $W^u(z)\cap U$ are
homeomorphic to $\{\zeta\in\CC:\zeta^m\in[0,1)\}$, as claimed.
\end{proof}

We shall next provide the demonstration of Theorem~\ref{T:dim2}. The proof of several technical points are relegated to
Lemmas~\ref{L:hatG}--\ref{L:homo} below, which without further mention we state in terms of
objects defined in the proof of the theorem. We recall that the $p$-th Betti number of a manifold $N$, denoted as $b_p(M)$, is
the rank of its $p$-th homology group $H_p(N;\ZZ)$, and that a critical point
$z$ of a $C^r$ function $f$ ($2\leq r\leq\om$) is called {\em
  nondegenerate} if the Hessian matrix of $f$ at $z$ has maximal
rank. A $C^r$ function is called {\em Morse}
when all its critical points are nondegenerate.

\begin{proof}[Proof of Theorem~\ref{T:dim2}]
By the classification of noncompact surfaces~\cite{Ri63}, there exists a homeomorphism
\[
\Phi: M\to\Si\minus\{h_1,\dots,h_\la\}\,,
\]
where $\Si$ is a compact oriented surface of genus $\nu$ and $\la$ points
$h_1,\dots,h_\la\in\Si$ have been deleted. It can be readily checked that the first Betti number can be expressed in terms of $\nu$ and $\la$ as
$b_1(M)=2\nu+\la-1$. We shall assume hereafter that $\Si$ is endowed with a smooth metric $\overline g$.

Let $\phi_t$ be the flow of the regularized vector field $X$ defined by Eq.~\eqref{X}, which is smooth in $M\backslash \overline{B_g(y,\epsilon)}$. We
define a continuous flow $\theta_t$ on $\Si$ by
\[
\theta_tx:=\begin{cases}\Phi\circ\phi_t\circ\Phi^{-1}(x)\,,\qquad
&\text{if
}\;x\not\in\{ h_1,\dots,h_\la\}\,,\\
x&\text{if }\;x\in\{ h_1,\dots,h_\la\}\,,
\end{cases}
\]
and denote by $\Fix(\theta_1)$ the fixed point set of the time-one map
$\theta_1$. As $X$ is proportional to the gradient field $\nabla_gG$, it does not have any periodic
trajectories, so that
\[
\Fix(\theta_1)=\Phi\big(\Cr(G)\cup\{y\}\big)\cup\{h_1,\dots,h_\la\}\,.
\]
The fact that the critical points of $G$ are isolated guarantees that $\Fix(\theta_1)$ does not accumulate but
possibly at $\{h_1,\dots,h_\la\}$.

Let $\gamma$ be a $\theta$-trajectory, i.e., a map
$\gamma:\RR\to\Si$ given by $\gamma(t)=\theta_tx$ for some
$x\in\Si$; this trajectory will be called {constant} if
$\gamma(\RR)=\{x\}$. Let us introduce the notation $\al(\gamma)$
and $\om(\gamma)$ for the $\al$- and $\om$-limit sets of the
trajectory $\gamma$. The $\al$- and $\om$-limit sets $\al(S)$, $\om(S)$ of a
subset $S\subset\Si$ are defined in the standard fashion, that is, as the union of the
$\al$- or $\om$-limits of the $\theta$-orbits passing through $S$. Since
$\nabla_gG$ is the gradient of a smooth function in
$M\minus\{y\}$ whose critical points are isolated, $\Phi$ is a homeomorphism and $\Si$ is a
$\la$-point compactification of $M$, it follows by standard
arguments~\cite[Section 1.1]{MP82} that
$\al(\gamma)$ and $\om(\gamma)$ consist of a single
point, which lies in $\Fix(\theta_1)$ and is respectively given by
\[
\al(\gamma)=\lim_{t\to-\infty}\gamma(t)\,,\qquad
\om(\gamma)=\lim_{t\to\infty}\gamma(t)\,.
\]

Let us introduce a partial order on $\Fix(\theta_1)$ as follows. Given two
points $x,x'\in \Fix(\theta_1)$, we shall write $x\prec x'$ if, for any
open neighborhoods $U\ni x$ and $V\ni x'$ in $\Sigma$, there exist integers
$p\leq0,q\geq1$ and nonconstant $\theta$-trajectories
$\gamma_{p},\dots,\gamma_{q}$ such that
\begin{enumerate}
\item $\om(\gamma_{p})\in U$, $\al(\gamma_{q})\in V$,
\item $\al(\gamma_j)=\om(\gamma_{j+1})$ for $p\leq j\leq q-1$.
\end{enumerate}

We claim that $G$ has a finite number of critical points. In order to
prove this, let us assume the contrary. By Lemma~\ref{L:h}, for each point
$x\in\Phi(\Cr(G))$ there exists some $j\in\{1,\dots,\la\}$ such that
$x\prec h_j$. Therefore there exists some $j$, say $j=1$, such that one
can choose a sequence $(x_k)_{k=1}^\infty$ of distinct points in
$\Phi(\Cr(G))$ with $x_k\prec h_1$. For each point $x_k$,
Lemma~\ref{L:curve} yields a continuous path $\Gamma_{k,1}:[0,1]\to\Si$ whose image is
invariant under $\theta_t$ and satisfies $\Gamma_{k,1}(0)=x_k$ and
$\Gamma_{k,1}(1)=h_1$.

By Lemma~\ref{L:halfb}, the $\theta_t$-invariant set
$\Phi(W^s(\Phi^{-1}(x_k)))\minus\Gamma_{k,1}([0,1])$ is nonemp\-ty. If we
let $\tilde x_k$ be the $\al$-limit of any $\theta$-orbit contained in
this set, we obviously have $x_k\prec\tilde x_k$. Since it follows from Lemma~\ref{L:h} that either $\tilde
x_k\in\{h_1,\dots, h_\la\}$ or $\tilde x_k\prec h_j$ for some $j$, by Lemma~\ref{L:curve} and possibly upon restricting
ourselves to a subsequence of $(x_k)$ we obtain a family of continuous
paths $\Gamma_{k,2}:[0,1]\to\Si$ whose image is invariant under $\theta_t$ and such
that $\Gamma_{k,2}(0)=x_k$ and $\Gamma_{k,2}(1)=h_j$ for some fixed $j$
(possibly $1$).

By construction, for each $k\in\NN$ the connected set
\[
\bigcup_{l,m=0,1}\Gamma_{k+l,1+m}([0,1])
\]
contains a subset $\La_k$ which is homeomorphic to $\SS^1$. One can
obviously ensure that $\La_j\neq\La_k$ for $j\neq k$. By
Lemma~\ref{L:homo}, all these loops must define different homology classes $[\La_j]\in H_1(\Si;\ZZ)$,
which contradicts the fact that the fundamental group of $\Si$ is finitely
generated. It then follows that $\Cr(G)$ is finite.

Let $B_j\subset\Si$ be a small neighborhood of $h_j$, which we can assume
to be disjoint from $\Phi(\Cr(G))$ by the finiteness of $\Cr(G)$. There is no loss of generality in assuming that the restriction of $\Phi^{-1}$ to $\Si\minus\bigcup_{j=1}^\la B_j$ is a diffeomorphism onto its image. As the number of critical points of $G$ is
finite, we can use Hopf's index theorem for the manifold with boundary
$N:=\Phi^{-1}(\Si\minus\bigcup_{j=1}^\la B_i)\subset M$. Note that the
vector field $X$ is transverse to $\pd N$ because $h_j$ is an isolated
minimum of the function $\hat G$ defined in Lemma~\ref{L:hatG}. Let $\{z_i\}_{i=1}^N$ be the critical points of $G$,
and let $m_i$ be the corresponding integer introduced in
Lemma~\ref{L:halfb}. Since
\[
\ind(X;z_i)=\ind(\nabla_gG;z_i)=1-m_i\,,
\]
and $\ind(X,y)=1$ by~\eqref{dotrte} and~\cite{Ar85}, a straightforward application of Hopf's index
theorem \cite{Ar85} shows that
\[
\chi(N)=1+\sum_{i=1}^N\ind(X,z_i)=1+N-\sum_{i=1}^Nm_i\,,
\]
$\chi(N)=2-2\nu-\la$ being the Euler characteristic of $N$. As
$m_i\geq 2$, it readily follows that
\[
N\leq 2\nu+\la-1=b_1(M)\,.
\]
Furthermore, the equality holds when $m_i=2$ for all $i$, which implies that all the
critical points of $G$ are nondegenerate.
\end{proof}

\begin{remark}\label{R:div}
For later reference, note that the proof remains valid if we only assume that $G$ satisfies the elliptic PDE in divergence form
$\Div_\om(\nabla_g G)=-\de_y$ and fulfills the conditions (i) and (ii) of Li--Tam Green's functions. Here $\om$ stands for an arbitrary
volume form in $M$ of class $C^\infty$.
\end{remark}

\begin{lemma}\label{L:hatG}
There exists a unique continuous function $\hat
G:\Si\to[-\infty,+\infty]$ such that $\hat
G|_{\Si\minus\{h_1,\dots, h_\la\}}=G\circ\Phi^{-1}$. Moreover, the
points $h_j$ are isolated local minima of $\hat G$.
\end{lemma}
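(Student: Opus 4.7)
The plan is to define $\hat G$ at each $h_j$ by continuity using property (ii) of Li--Tam Green's functions, and then to establish the local minimum property via the minimum principle applied to bounded domains cut out by the Li--Tam exhaustion.

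I set $\hat G(h_j) := c_j := \lim_{x \to h_j} G\circ \Phi^{-1}(x) \in [-\infty, +\infty]$, which exists by property~(ii). The restriction $\hat G|_{\Si \minus \{h_1, \dots, h_\la\}} = G\circ\Phi^{-1}$ is continuous as a composition of continuous maps, while continuity at each $h_j$ is precisely the defining condition of the limit $c_j$. Uniqueness of $\hat G$ is automatic, since any such continuous extension is determined on the dense subset $\Si \minus \{h_1, \dots, h_\la\}$.

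For the second assertion, the case $c_j = -\infty$ is immediate: $\hat G$ is finite on $\Si \minus (\{\Phi(y)\} \cup \{h_1, \dots, h_\la\})$, so in any disk neighborhood of $h_j$ in $\Si$ disjoint from $\Phi(y)$ and from the other ends, the point $h_j$ is a strict, hence isolated, local minimum. When $c_j$ is finite, my plan is to show $G \geq c_j$ on an end-neighborhood of $h_j$ in $M$. I pick a small disk $V \subset \Si$ around $h_j$ avoiding $\Phi(y)$ and the other ends, set $V' := \Phi^{-1}(V \minus \{h_j\}) \subset M$, and consider the bounded subdomains $W_k := V' \cap \Om_k$ carved out by the Li--Tam exhaustion, with boundaries $\pd V' \cup \Ga_{j,k}$, where $\Ga_{j,k} := V' \cap \pd\Om_k$ is the boundary component of $\pd\Om_k$ corresponding to the end $h_j$. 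On $\Ga_{j,k}$ the Dirichlet Green's function $G_{\Om_k}$ vanishes identically, while $\Ga_{j,k}$ approaches $h_j$ in $\Si$ as $k \to \infty$, so by continuity of $\hat G$ at $h_j$ the values of $G|_{\Ga_{j,k}}$ converge uniformly to $c_j$. Applying the minimum principle to $G$ on $W_k$ and passing to the limit $k \to \infty$ yields $G \geq c_j$ on $V'$; combined with the strong maximum principle (which rules out $G \equiv c_j$ on an open subset of $M$), this makes $h_j$ a strict local minimum of $\hat G$.

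The main obstacle is the limit argument identifying $\lim_k G|_{\Ga_{j,k}}$ with $c_j$ and propagating this to a bound on all of $V'$, since the Li--Tam convergence $G_{\Om_k} - a_k \to G$ is uniform only on compacta of $M \minus \{y\}$, whereas $\Ga_{j,k}$ escapes to infinity. This is bypassed by invoking the definition of $c_j$ as the limit of $G$ at $h_j$ in $\Sigma$, which guarantees that for every $\ep > 0$ the set $\Ga_{j,k}$ is eventually contained in the $\Sigma$-neighborhood $\{|\hat G - c_j| < \ep\}$ of $h_j$, so that the boundary values of $G$ on $\Ga_{j,k}$ can be controlled uniformly, and then feeding this bound into the minimum principle on the bounded regions $W_k$. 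Finally, the isolation of $h_j$ as a local minimum of $\hat G$ follows at once because the strong maximum principle precludes interior local minima of the harmonic function $G = \hat G\circ\Phi$ in $M$, confining the local minima of $\hat G$ on $\Si$ to the finite, hence discrete, set $\{h_1, \dots, h_\la\}$.
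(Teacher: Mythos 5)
Your extension of $G\circ\Phi^{-1}$ to $\hat G$ and the uniqueness-by-density argument agree verbatim with the paper's, which then dispatches the local-minimum claim in a single sentence by invoking Property~(i). Your route via the exhaustion and the minimum principle is genuinely different, and it has a gap in the crucial case $c_j$ finite. When you apply the minimum principle on $W_k=V'\cap\Om_k$, the boundary $\pd W_k$ consists of \emph{two} pieces, $\Ga_{j,k}$ and $\pd V'\cap\Om_k$. You control $G$ on $\Ga_{j,k}$ (it tends to $c_j$), but you say nothing about $\pd V'$; the minimum principle therefore only yields
\[
\min_{\overline{W_k}}G \;\geq\; \min\Bigl\{\min_{\Ga_{j,k}}G,\ \min_{\pd V'}G\Bigr\},
\]
and the conclusion $G\geq c_j$ on $V'$ follows only if one already knows $\min_{\pd V'}G\geq c_j$. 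That is essentially the statement you are trying to prove (applied to a slightly larger neighborhood), so the argument is circular as written. Note also that the observation that $G_{\Om_k}$ vanishes on $\Ga_{j,k}$ is never used and is not the relevant quantity; it is $G$ itself whose boundary values matter.

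One cannot patch this by taking $V$ to be a sub-level component $\{\hat G<c_j+\ep\}$ either: a priori that component need not be a punctured disk neighborhood of $h_j$ with compact boundary away from $h_j$ (it may run along a valley of $\hat G$ towards another end, precisely when $\hat G$ takes values below $c_j$ arbitrarily close to $h_j$), in which case the minimum-principle bound degenerates. The case that must be addressed head-on is that of a parabolic end $h_j$ of a non-parabolic surface, where $c_j$ is finite and positive and $G$ oscillates about $c_j$ near $h_j$; your argument does not engage with this. The paper's one-line appeal to Property~(i) sidesteps the local estimate altogether, so the two proofs are not variants of one another. Your closing remark about isolation (only finitely many candidate local minima, since a harmonic function has no interior local minimum) is fine and matches what one would expect, but it presupposes the local-minimum property that is left unproved.
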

\begin{proof}
The fact that $G$ is Li--Tam implies the existence of the
limit
\[
\lim_{x\to h_i}G\circ\Phi^{-1}(x)
\]
(possibly $-\infty$) for all $1\leq i\leq\la$. As $\lim_{x\to y}G(x)=+\infty$, this
enables us to extend $G\circ\Phi^{-1}$ naturally to a continuous
function $\hat G:\Si\to[-\infty,+\infty]$. By Property~(i) of Li--Tam Green's functions, the
points $h_j$ are isolated local minima of $\hat G$.
\end{proof}

\begin{lemma}\label{L:curve}
Let $x,x'\in\Fix(\theta_1)$ such that $x\prec x'$. Then there
exists a (not necessarily unique) continuous path
$\Gamma:[0,1]\to\Si$ such that:
\begin{enumerate}
\item $\Gamma(0)=x$ and $\Gamma(1)=x'$.

\item $\hat G\circ\Gamma$ is strictly decreasing, where $\hat G$
is the function defined in Lemma~\ref{L:hatG}.

\item The curve $\Gamma([0,1])$ is invariant under the flow
$\theta_t$.
\end{enumerate}
\end{lemma}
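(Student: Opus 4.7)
The plan is to use the relation $x\prec x'$ to extract a chain of $\theta$-trajectories whose concatenation yields the desired path $\Gamma$, with a limiting argument needed when one endpoint is a non-isolated fixed point.

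I would begin with the observation that $\hat G$ is strictly monotone increasing along every nonconstant $\theta$-trajectory: since $X$ is a positive multiple of $\nabla_gG$ on $M\minus(\Cr(G)\cup\{y\})$, one has $\cL_XG>0$ there, and the flow is trivially extended at the punctures $h_j$. Consequently, any chain $\gamma_p,\ldots,\gamma_q$ with $\alpha(\gamma_j)=\omega(\gamma_{j+1})$, traversed in the order $\gamma_q,\gamma_{q-1},\ldots,\gamma_p$ and with the shared fixed points inserted at the junctions (where the $\hat G$-values match automatically), defines a continuous path from $\alpha(\gamma_q)$ to $\omega(\gamma_p)$ on which $\hat G$ is strictly increasing. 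Reversing the parameterization gives a $\theta$-invariant continuous path on which $\hat G$ strictly decreases, which is of the desired form.

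The main step is to choose such a chain with $\omega(\gamma_p)=x$ and $\alpha(\gamma_q)=x'$ exactly. The crucial observation is that $\Fix(\theta_1)=\Phi(\Cr(G)\cup\{y\})\cup\{h_1,\ldots,h_\la\}$ cannot accumulate anywhere except possibly at the punctures $h_j$, since the critical points of $G$ are isolated. Hence, if neither $x$ nor $x'$ is a puncture, I would pick $U\ni x$ and $V\ni x'$ so small that $U\cap\Fix(\theta_1)=\{x\}$ and $V\cap\Fix(\theta_1)=\{x'\}$; then any chain supplied by the definition of $\prec$ automatically satisfies $\omega(\gamma_p)=x$ and $\alpha(\gamma_q)=x'$, and the construction above gives $\Gamma$ directly.

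If $x'$ is a puncture $h_j$ (the case actually used in the theorem, where $x$ is a critical point of $G$; the case $x=h_i$ is symmetric), I would run a limit argument. Pick nested neighborhoods $V_k\searrow\{x'\}$; after shrinking $U$ so that $\omega(\gamma_{k,p_k})=x$ as above, the hypothesis $x\prec x'$ yields continuous $\theta$-invariant paths $C_k$ from $w_k:=\alpha(\gamma_{k,q_k})\in V_k$ to $x$, each of which I reparameterize by the value of $\hat G$. By Lemma~\ref{L:hatG}, $h_j$ is an isolated local minimum of $\hat G$, which forces $\hat G(w_k)\to\hat G(x')$ and prevents any other fixed point near $h_j$ from having $\hat G$-value equal to $\hat G(h_j)$. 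Passing to a subsequence, the closed invariant sets $\overline{C_k([0,1])}$ admit a Hausdorff limit inside $\Si$, which I then reparameterize by $\hat G$ on the interval $[\hat G(x'),\hat G(x)]$; extending by $x'$ at the left endpoint using the local-minimum property produces~$\Gamma$. The main obstacle is precisely this last limit argument, namely verifying that the Hausdorff limit of the $C_k$ is a genuine continuous $\theta$-invariant path ending exactly at $x'$, and the combination of continuity of $\hat G$ on $\Si$, the strict monotonicity of $\hat G$ along trajectories, and the isolation of $h_j$ as a local minimum is what pins down the limiting endpoint.
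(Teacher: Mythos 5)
Your overall strategy is the same as the paper's: concatenate a chain of $\theta$-trajectories linked end-to-end, observe that $\hat G$ is strictly monotone along the chain, and handle the possibility that $\Fix(\theta_1)$ accumulates at a puncture. For the case where both $x$ and $x'$ are isolated in $\Fix(\theta_1)$, your argument is clean: once $U$ and $V$ isolate $x$ and $x'$ in $\Fix(\theta_1)$, the finite chain furnished by the definition of $\prec$ has endpoints exactly $x$ and $x'$, and its concatenation is the desired arc.

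The genuine gap is in the puncture case, in the final Hausdorff-limit step. You take chains $C_k$ from $x$ to points $w_k\to x'$ and pass to a Hausdorff-convergent subsequence, but no relation is imposed between $C_k$ and $C_{k+1}$: each is supplied independently by the definition of $\prec$. A Hausdorff limit of arcs is in general only a compact connected set, and $\hat G$ need not be injective on it --- the $C_k$ can alternate between distinct invariant ``branches'' approaching $h_j$, and the limit then contains more than one point at a fixed $\hat G$-level, so ``reparameterizing the limit by $\hat G$'' is not well defined. (Equivalently, if you reparameterize each $C_k$ by its $\hat G$-value and try to extract a uniform limit, you would need equicontinuity of these curves, which you have not shown.) The paper sidesteps this by applying Zorn's lemma to the set of linked chains ordered by extension, producing a single maximal, linearly ordered (possibly infinite) chain; the closure of the union of this one chain is then claimed to be an arc directly, and no limit of a sequence of distinct arcs is involved. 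To repair your argument along your own lines you would need to arrange that $C_{k+1}$ extends $C_k$ as a chain, which is essentially what the Zorn's lemma step delivers.
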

\begin{proof}
By the definition of the partial order $\prec$ and Zorn's lemma, there exists a countable
sequence $\{\gamma_j\}_{j=\overline p}^{\overline q}$ ($-\overline p,\overline q\in\NN\cup\{\infty\}$) of
nonconstant $\theta$-trajectories satisfying Conditions (i) and (ii) in the proof of Theorem~\ref{T:dim2}, and
such that
\[
\lim_{j\to\overline  p}\om(\ga_j)=x\,,\qquad \lim_{j\to \overline q}\al(\ga_j)=x'\,.
\]
Hence any homeomorphism
\[
\Gamma:[0,1]\to\overline{\bigcup_{j=\overline p}^{\overline q}\gamma_j(\RR)}\subset\Si
\]
mapping $0$ to $x$ and $1$ to $x'$ yields the desired path. Since
the Lie derivative of $G$ along $X$ is nonnegative and strictly
positive in $M\minus\Cr(G)$, $\hat G$ is easily shown to be
increasing along $\theta$-orbits, which implies that $\hat G\circ\Gamma$ is
strictly decreasing. Moreover, the set $\Gamma([0,1])$ is clearly invariant
under $\theta_t$ because it is the union of $\theta$-orbits.
\end{proof}

\begin{lemma}\label{L:h}
For any $x\in\Fix(\theta_1)\minus\{h_1,\dots,h_\la\}$ there exists
some $j$ such that $x\prec h_j$. Moreover, $h_1,\dots,h_\la$ are
maximal elements of $\prec$ and $x\not\prec x$ for any
$x\in\Fix(\theta_1)$.
\end{lemma}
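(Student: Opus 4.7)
My plan is to build backward chains of trajectories by iterating the ``take an orbit ending at me and pass to its $\al$-limit'' operation, controlled throughout by the strict monotonicity of $\hat G$ along nonconstant $\theta$-orbits (an immediate consequence of $\cL_X G>0$ on $M\setminus(\Cr(G)\cup\{y\})$, as recorded in the proof of Theorem~\ref{T:boundary}, pushed forward to $\Sigma$ via $\Phi$).

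For assertion (1), given $x=\Phi(z_0)$ with $z_0\in\Cr(G)\cup\{y\}$, I first select a nonconstant $\theta$-orbit $\gamma^{(0)}$ with $\om(\gamma^{(0)})=x$: if $z_0\in\Cr(G)$ this is a stable separatrix furnished by the hyperbolic sector structure of Lemma~\ref{L:halfb}, while if $z_0=y$ such an orbit exists because $y$ is an $X$-attractor. The $\al$-limit $w_1:=\al(\gamma^{(0)})$ lies in $\Fix(\theta_1)\setminus\{\Phi(y)\}$, since the attractor $\Phi(y)$ cannot be the $\al$-limit of a nonconstant orbit. If $w_1\in\{h_1,\dots,h_\la\}$ the chain $\gamma^{(0)}$ already witnesses $x\prec h_j$; otherwise $w_1=\Phi(z_1)$ with $z_1\in\Cr(G)$, and I invoke Lemma~\ref{L:halfb} again to produce $\gamma^{(1)}$ with $\om(\gamma^{(1)})=w_1$, continuing inductively. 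Along the resulting chain, $\hat G(w_0)>\hat G(w_1)>\hat G(w_2)>\cdots$ is strictly decreasing, so the $w_k$ are pairwise distinct. Either the iteration terminates at some $w_N\in\{h_1,\dots,h_\la\}$, giving a finite chain $\gamma^{(0)},\dots,\gamma^{(N-1)}$ that directly witnesses $x\prec h_j$, or it produces an infinite sequence $(w_k)$ inside the compact $\Sigma$ which admits a convergent subsequence $w_{k_n}\to w^*\in\Fix(\theta_1)$. Since every $w_k$ is the $\Phi$-image of a critical point of $G$, and critical points of $G$ are isolated in $M$ (by the unique continuation/Bers argument at the start of Section~\ref{S:surfaces}), the preimages $\Phi^{-1}(w_{k_n})$ must leave every compact subset of $M$; hence $w^*\in\{h_1,\dots,h_\la\}$. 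Truncating at a sufficiently large $k_n$ puts $\al(\gamma^{(k_n-1)})$ inside any prescribed neighborhood of $h_j:=w^*$, establishing $x\prec h_j$.

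For assertion (2), maximality of $h_j$ rests on the observation that no nonconstant $\theta$-orbit has $h_j$ as its $\om$-limit: Lemma~\ref{L:hatG} makes $h_j$ an isolated local minimum of $\hat G$, while an orbit tending to $h_j$ would have $\hat G\circ\gamma(t)$ strictly increasing to $\hat G(h_j)$ from below, contradicting $\hat G\geq\hat G(h_j)$ near the minimum. Thus in any putative chain witnessing $h_j\prec x'$, the $\om$-endpoint $\om(\gamma_p)$ must be a fixed point of $U\setminus\{h_j\}$; shrinking $U$ to exclude all other fixed points (possible since $\Phi(y)$ is a single point, and any accumulation of $\Phi(\Cr(G))$ at $h_j$ can be ruled out by feeding the accumulating critical points back into (1) and comparing the resulting chains' $\hat G$-values with $\hat G(h_j)$) gives the desired contradiction. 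Irreflexivity $x\not\prec x$ is handled analogously: such a chain would require $\hat G(\om(\gamma_p))>\hat G(\al(\gamma_q))$ with both endpoints approaching $x$, so all intermediate fixed-point values of $\hat G$ would be squeezed into a window around $\hat G(x)$ that collapses as $U,V$ shrink, yet every nonconstant trajectory in the chain contributes a strictly positive $\hat G$-gap—an incompatibility.

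The main obstacle will be the infinite-chain scenario in (1): one must argue carefully, using compactness of $\Sigma$ together with the isolation of $\Cr(G)$ in $M$ and the structure of the end-compactification $\Sigma$, that any accumulation of the backward sequence $(w_k)$ must occur at one of the added end points $h_j$ rather than inside $\Phi(M)$.
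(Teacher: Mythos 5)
Your treatment of part (1) is essentially the same as the paper's: build a backward chain by repeatedly taking a stable separatrix (furnished by Lemma~\ref{L:halfb}) and passing to the $\alpha$-limit, use the strict monotonicity of $\hat G$ to keep the visited fixed points distinct, and in the infinite case use isolation of $\Cr(G)$ in $M$ together with compactness of $\Sigma$ to force the accumulation point onto one of the $h_j$. That part is fine.

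Part (2) has a genuine gap. Your maximality argument rests on "shrinking $U$ to exclude all other fixed points", but at this stage of the proof this is exactly what one cannot do: the paper notes explicitly (proof of Theorem~\ref{T:dim2}) that $\Fix(\theta_1)$ may accumulate at the $h_j$, and finiteness of $\Cr(G)$ is precisely what Lemma~\ref{L:h} and Lemma~\ref{L:homo} are being combined to establish. Your proposed repair--feeding accumulating critical points back into (1) and comparing $\hat G$-values with $\hat G(h_j)$--does not yield a contradiction: one would only obtain that some $\hat G(h_{j'})\leq\hat G(h_j)$, which is entirely consistent (for instance $j'=j$). The irreflexivity argument has a similar weakness: you appeal to each nonconstant trajectory contributing a strictly positive $\hat G$-gap, but there is no uniform lower bound on that gap, so squeezing the window around $\hat G(x)$ does not force a contradiction as stated. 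The paper sidesteps both problems by first extracting, via Lemma~\ref{L:curve}, an actual continuous path $\Gamma$ with $\Gamma([0,1])$ flow-invariant and $\hat G\circ\Gamma$ \emph{strictly} decreasing. Then $h_j\prec x'$ would force $\hat G(\Gamma(t))<\hat G(\Gamma(0))=\hat G(h_j)$ for small $t>0$ while the local-minimum property gives $\hat G(\Gamma(t))\geq\hat G(h_j)$; and $x\prec x$ would force $\hat G(x)=\hat G(\Gamma(0))>\hat G(\Gamma(1))=\hat G(x)$. Neither step needs any control on how fixed points cluster. You should rewrite part (2) along these lines, invoking Lemma~\ref{L:curve} directly.
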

\begin{proof}
Let us take an element $x\in\Fix(\theta_1)\minus\{\Phi(y),h_1,\dots,h_\la\}$. By
Lemma~\ref{L:halfb}, there exists a neighborhood $U\subset M$ of
$z:=\Phi^{-1}(x)$ such that $(W^s(z)\cap U)\minus\{z\}$ has at
least two components $C_1,C_2$ and each $C_i$ is a connected subset of
an orbit of $X$. As $G$ is smooth and its critical set consists of isolated points, either the $\al$-limit set
of $C_1$ under the flow of $X$ does not exist or it is another
critical point $z_1$ of $G$.

Set $x_1:=\Phi(z_1)$. If the $\al$-limit of $C_1$ does not exist in $M$,
there exists a sequence $(t_k)_{k=1}^\infty\searrow-\infty$ such that
$\dist_{\overline g}(\theta_{t_k}x_1,\{h_1,\dots,h_\la\})\to0$. In turn, this implies that
\[
\liminf_{k\to\infty}
\dist_{\overline g}(\theta_{t_k}x_1,h_j)=0
\]
for some $j\in\{1,\dots,\la\}$ because $\la$ is finite. The facts that
$h_j$ is an isolated local minimum of $\hat G$ by
Lemma~\ref{L:hatG} and $\hat G$ is increasing along
$\theta$-orbits now imply that $\theta_{t}x_1$ tends to $h_j$ as
$t\to-\infty$, proving the statement.

Let us now assume that the $\al$-limit of $C_1$ is another
critical point $z_2\in\Cr(G)$ and repeat the previous argument
replacing $z_1$ by $z_2$. Proceeding this way we obtain a sequence
of critical points $z_1,z_2,\dots$ of $G$. If this sequence is finite
and terminates after $k$ steps, the stable set $W^s(z_k)$ of the critical point
$z_k$ must be unbounded, and in this case the
previous argument readily shows that $\theta_{t}\Phi(z_k)$ tends
to some point $h_j$ as $t\to-\infty$.

If the latter sequence is infinite, we find a countable sequence
$(z_k)_{k=1}^\infty\subset\Cr(G)$ with $x_k\prec x_{k+1}$, where
$x_k:=\Phi(z_k)$. As $\Cr(G)$ consists of isolated points in $M$, it follows that $\dist_{\overline g}(x_k,\{h_1,\dots,h_\la\})$ tends to zero as
$k\to\infty$, and hence there must exist some $j\in\{1,\dots,\la\}$ such that
\[
\lim_{k\to\infty} \dist_{\overline g}(x_k,h_j)=0
\]
for some subsequence that we still call $x_k$. As above, it then follows that $x_k\to h_j$ by the fact that $h_j$
is an isolated minimum of $\hat G$ and $\hat G$ is increasing
along $\theta$-orbits.

Since the $\al$-limit of an integral
curve whose $\om$-limit is $y$ either does not exist or is a
critical point of $G$, the above proof also applies when we replace the original fixed point $x\in\Fix(\theta_1)\minus\{\Phi(y),h_1,\dots,h_\la\}$ by the pole $\Phi(y)$. Finally, note that obviously $x\not\prec x$ for any $x\in\Fix(\theta_1)$ by
Lemma~\ref{L:curve}, and that $h_j\not\prec x$ because each $h_j$ is an
isolated minimum of $\hat G$ and $\hat G$ is increasing along the
continuous flow $\theta_t$.
\end{proof}

\begin{lemma}\label{L:homo}
  Let $\La_k\subset\Si$, with $k=0,1$, be homeomorphic to $\SS^1$. Assume
  that the sets $\La_k$ do not contain $\Phi(y)$ and are invariant
  under $\theta_t$. Then the homology class of $\La_k$ is nontrivial,
  and $\La_0$ and $\La_1$ are not homologous if $\La_0\neq\La_1$.
\end{lemma}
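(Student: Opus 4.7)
The plan is to reduce both assertions to a single maximum-principle statement. I will formulate the following \emph{core claim}: if $U\subset\Si$ is a non-empty open $\theta$-invariant set with $\Phi(y)\notin\overline U$ and $\pd U$ contained in a finite union of $\theta$-orbits and $\theta$-fixed points, then $\hat G|_{\overline U}$ cannot attain a maximum.

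Granting the core claim, the nontriviality of $[\La_k]$ will follow quickly: if $[\La_k]=0$ then $\La_k$ separates $\Si$ into two open $\theta$-invariant components, at least one of which avoids $\Phi(y)$ and plays the role of $U$. The distinctness of $[\La_0]$ and $[\La_1]$ will follow after a mod-$2$ reduction. If $[\La_0]=[\La_1]$, then in $H_1(\Si;\ZZ/2)$ the class $[\La_0+\La_1]$ vanishes, so $\La_0+\La_1$ bounds a mod-$2$ $2$-chain; because $\La_0\cup\La_1$ is a finite $1$-subcomplex of $\Si$ (its vertices are the finitely many $\theta$-fixed points lying on it, its edges are the intervening orbit arcs), such a chain can be represented as a mod-$2$ union $U$ of components of $\Si\minus(\La_0\cup\La_1)$. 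The nontriviality already proved, combined with $\La_0\neq\La_1$, guarantees $\La_0+\La_1\neq 0$ as a mod-$2$ cycle, so $U$ is non-empty and proper; after possibly replacing $U$ by its complementary union of components one can arrange $\Phi(y)\notin U$, and the core claim applies.

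To prove the core claim I will argue by contradiction. Assume $\hat G|_{\overline U}$ attains its maximum at some $x^*\in\overline U$. Since $\cL_XG>0$ outside $\Cr(G)\cup\{y\}$, the function $\hat G$ strictly increases along every non-constant $\theta$-orbit; combined with the $\theta$-invariance of $\overline U$, this forces $x^*$ to be a $\theta$-fixed point, so $x^*\in\{\Phi(y)\}\cup\Phi(\Cr(G))\cup\{h_1,\dots,h_\la\}$. The case $x^*=\Phi(y)$ is precluded by hypothesis. The case $x^*=h_j$ is ruled out because Lemma~\ref{L:hatG} makes $h_j$ an isolated local minimum of $\hat G$, so the maximum condition forces $\hat G$ to be locally constant on the non-empty open set $U$ near $h_j$, and real-analyticity of $\hat G$ on $M\minus\{y\}$ then propagates this constancy to all of $M$, contradicting the pole of $\hat G$ at $y$. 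The substantive case is $x^*=\Phi(z)$ with $z\in\Cr(G)$: Lemma~\ref{L:halfb} decomposes a punctured neighborhood of $z$ into $2m$ hyperbolic sectors, each bounded by one stable and one unstable flow-branch, and since $\hat G$ strictly increases away from $x^*$ along every unstable branch, continuity implies that the interior of each hyperbolic sector contains points arbitrarily close to $x^*$ at which $\hat G>\hat G(x^*)$. Because $\pd U$ near $x^*$ is contained in the finite set of flow-branches at $x^*$, every connected component of $U$ meeting a small neighborhood of $x^*$ is a non-empty union of consecutive hyperbolic sectors, and therefore contains points with $\hat G>\hat G(x^*)$, contradicting the choice of $x^*$.

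The main technical obstacle arises in the distinctness step when $x^*\in\La_0\cap\La_1$ is a critical point at which up to four of the $2m$ flow-branches lie in $\La_0\cup\La_1$, producing an intricate local partition of a neighborhood into $U$-regions. The uniform observation that resolves this is that any two consecutive wall-branches at $x^*$ (in the cyclic order of the walls) enclose at least one complete hyperbolic sector of the flow, and any such sector supplies points with $\hat G>\hat G(x^*)$ near its unstable bounding branch.
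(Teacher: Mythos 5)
Your proof is correct in substance but takes a genuinely different route from the paper, and the difference is instructive. The paper, after placing the maximum point $z$ of $G$ on an invariant compact region $K$ at a point of $\Cr(G)$, derives the contradiction from the \emph{volume-preservation} of the $\nabla_g G$-flow: since $\De_g G=0$ in $K$, the local flow of $\nabla_g G$ is measure-preserving, yet a set of positive measure $B_g(z,\de)\cap K$ would have its entire forward orbit converging to the single point $z$. You instead never invoke the divergence theorem; you exploit the explicit $2m$-sector picture of Lemma~\ref{L:halfb} to see that any $\theta$-invariant open set $U$ whose boundary locally lies on separatrices of $x^*$ must contain a full hyperbolic sector, and every such sector contains points where $\hat G$ exceeds $\hat G(x^*)$. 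Harmonicity enters your argument only through Lemma~\ref{L:halfb} (the leading Taylor polynomial is harmonic, giving the equiangular sector structure), rather than through the volume form. Your route is arguably cleaner in two dimensions, since it sidesteps the delicate point that $\nabla_g G$ is incomplete and one must transfer the measure argument between the (complete, non-divergence-free) field $X$ and the (divergence-free, incomplete) field $\nabla_g G$; the paper's route, on the other hand, is dimension-independent and would survive even without the sector lemma. Your mod-$2$ reduction unifying the two conclusions of the lemma is also a nice simplification of the paper's separate treatment of ``$\La_0$ homologous to $\La_1$'' and ``$\La_0$ null-homologous.''

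Two small points worth tightening. First, the phrasing of the ``core claim'' is off: since $\overline U$ is compact and $\hat G$ is continuous with values in $[-\infty,+\infty]$ (and bounded above on $\overline U$ because $\Phi(y)\notin\overline U$), the restriction $\hat G|_{\overline U}$ always attains a maximum; what you actually prove is that no such non-empty $U$ can exist. Second, in the case $x^*=h_j$ the correct contradiction is not that $\hat G$ becomes locally constant on a non-empty open subset of $U$, but rather that the isolated-local-minimum property of $h_j$ forces $\overline U$ to meet a small neighborhood of $h_j$ only in $\{h_j\}$ itself, so that $U$ being open does not meet that neighborhood at all, contradicting $h_j\in\overline U$. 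Both are easily repaired and do not affect the overall correctness of your argument.
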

\begin{proof}
Assume that $\La_0$ is homologous to $\La_1$ and not homologous to zero. Then there exists a unique
compact, connected set $K_0\subset\Si$ which does not contain $\Phi(y)$
and whose boundary is $\pd K_0=\La_0\cup\La_1$. Let us define
\[
K:=\Phi^{-1}\big(K_0\minus\{h_1,\dots,h_\la\}\big)\,,
\]
which is a possibly unbounded closed, connected subset of $M$
not containing~$y$. As $\La_k$ is invariant under $\theta_t$, $K$ must be
invariant under the flow of $X$.

Let us choose $x$ so that $\hat G(x)=\max_{K_0}\hat G$. Since each $h_j$ is an
isolated local minimum of $\hat G$, the point $z:=\Phi^{-1}(x)\in K$ must
satisfy $G(z)=\sup_KG$. By the maximum principle and the fact that $G$ is
harmonic in $K$, $z$ must lie on the boundary of $K$. Since $\pd K$ is
invariant and $\Cr(G)$ does not accumulate, $z$ must be the $\om$-limit
along the flow of $X$ of any other boundary point sufficiently close to
$z$. $\nabla_g G$ being a gradient field, this implies~\cite[Section 1.1]{MP82}
that $z\in\Cr(G)$.

Since $G$ is continuous on $K$, $z$ is an isolated maximum of
$G|_K$ and $G$ increases along the flow $\phi_t$ of $X$, it follows that
for any $\ep>0$ there exists some $\de>0$ such that
\[
\phi_t\big( B_g(z,\de)\cap K\big)\subset B_g(z,\ep)
\]
for all $t>0$. Hence there exists a region $B_g(z,\de)\cap K$ of nonzero
measure whose $\om$-limit is $z$. As the flow of $X$ and the local flow of
$\nabla_g G$ have the same integral curves, the fact that
$\om(B_g(z,\de)\cap K)=\{z\}$ contradicts the harmonicity of $G$ in $K$.

Likewise, if $\La_0$ is homologous to zero, it is the boundary of a
compact subset $K_0\subset \Si$ of nonempty interior which does not contain $\Phi(y)$, and the previous
argument immediately leads to a contradiction.
\end{proof}

When $M$ is diffeomorphic to a plane or a cylinder, it is a trivial consequence of Theorem~\ref{T:dim2} that a stronger statement holds, which can be regarded as an extension to Riemannian surfaces of the classical result that the Dirichlet Green's function of a simply or doubly connected region in the Euclidean plane has exactly zero or one critical points, in each case~\cite[Section 7.5]{Wa50}.

\begin{corollary}\label{C.plane}
  If $M$ is diffeomorphic to the plane, $\Cr(G)=\emptyset$.  If $M$ is
  diffeomorphic to the cylinder $\SS^1\times\RR$, then any Li--Tam Green's
  function is Morse and has exactly one critical point.
\end{corollary}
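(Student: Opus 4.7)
The plan is to derive both statements as short corollaries of Theorem~\ref{T:dim2} combined with the topological trichotomy given by Proposition~\ref{P:gil}. First I would recall the elementary fact that $b_1(\RR^2)=0$ and $b_1(\SS^1\times\RR)=1$, so Theorem~\ref{T:dim2} already bounds the cardinality of $\Cr(G)$ by $0$ and $1$ respectively, with equality in the second case occurring if and only if the Green's function is Morse.

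For the planar case there is nothing left to do: the estimate $|\Cr(G)|\leq b_1(M)=0$ forces $\Cr(G)=\emptyset$. For the cylinder the only substantive point is to rule out the possibility $\Cr(G)=\emptyset$, since then Theorem~\ref{T:dim2} will automatically upgrade the inequality $|\Cr(G)|\leq 1$ to an equality and also yield the Morse property.

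The argument to rule out an empty critical set is a direct application of (the two-dimensional analogue of) Proposition~\ref{P:gil}. If $\Cr(G)$ were empty, the asymptotics~\eqref{asympG} would make the level sets $G^{-1}(c)$ diffeomorphic to $\SS^{1}$ for $c$ large, and the absence of critical values together with the compactness of the components of each level set (Proposition~\ref{P:elem}) would propagate this along the gradient flow to all regular values. One would then exhibit $M$ as the monotone union of open disks $B_j:=G^{-1}((c_j,\infty))\cup\{y\}$, with $c_j\searrow\inf G$, and invoke Brown's theorem to conclude that $M$ is diffeomorphic to $\RR^2$. This contradicts the hypothesis that $M$ is diffeomorphic to $\SS^1\times\RR$, and the contradiction forces $|\Cr(G)|\geq 1$.

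I do not expect any real obstacle here. The only point that merits checking is that Proposition~\ref{P:gil}, stated in Section~\ref{S:critical} under the running assumption $n\geq 3$, remains valid in dimension two; this is evident from its proof, which uses only the asymptotic sphericity of level sets near the pole, compactness of the components of regular level sets, and Brown's monotone-union theorem, all of which are dimension-independent. With this verification in hand, the corollary drops out in a few lines.
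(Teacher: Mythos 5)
Your proposal is correct and follows essentially the same route as the paper: invoke Theorem~\ref{T:dim2} to get the bound $|\Cr(G)|\leq b_1(M)$ together with the Morse characterization of equality, and use Proposition~\ref{P:gil} (which you rightly note extends to $n=2$) to exclude $\Cr(G)=\emptyset$ when $M$ is the cylinder. The paper's proof is the same two-line argument.
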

\begin{proof}
By Theorem~\ref{T:dim2}, $G$ does not have any critical points when $M\cong\RR^2$, and it has at most one critical point if $M\cong\SS^1\times\RR$. As $\Cr(G)$ is necessarily nonempty when $M$ is not contractible by Proposition~\ref{P:gil}, the claim follows.
\end{proof}

It is an open problem to show that the number of connected
components of the critical set of any Li--Tam Green's function $G$ is finite for every finitely
generated Riemannian $n$-manifold ($n\geq3$) or else to construct a counterexample. The number of components of $\Cr(G)$ is
certainly finite for analytic $n$-manifolds with nonnegative Ricci
curvature, Euclidean volume growth and quadratic curvature decay
by a theorem of Colding and Minicozzi~\cite{CM97}, which ensures
that there exists a constant $C>0$ such that
\[
\Big|\big|\nabla_g
G(x)\big|_g-C\dist_g(x,y)^{1-n}\Big|<\ep(\dist_g(x,y))\,\dist_g(x,y)^{1-n}\,,
\]
where $\lim\limits_{r\to\infty}\ep(r)=0$. Hence the critical set of
$G$ is bounded, so that the analyticity of $G$ implies that $\Cr(G)$ must have a finite
number of connected components.

The techniques that we have presented for the study of Green's functions on surfaces can be  modified to deal with Li--Tam Green's functions in axisymmetric smooth $n$-manifolds $(\RR^n,g)$. In order to illustrate this fact, we shall conclude this section with a criterion for the absence of critical points which makes use of some of the ideas introduced in the proof of Theorem~\ref{T:dim2} and in Section~\ref{S:critical}. Some details will be barely sketched to avoid unnecessary repetitions.

\begin{theorem}\label{P:absence}
Let $M$ be diffeomorphic to $\RR^n$, $n\geq3$, and suppose that there exists
a subgroup of isometries $H\subset\Isom(M,g)$ isomorphic to
$\SO(n-1)$. If $y$ is invariant under the action of $H$, then
$\Cr(G)=\emptyset$.
\end{theorem}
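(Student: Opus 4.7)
The plan is to reduce the $n$-dimensional problem to the two-dimensional setting of Corollary~\ref{C.plane} by restricting $G$ to a totally geodesic plane through the fixed-point set $A := \Fix(H)$. By property (iii) of Li--Tam Green's functions, we may assume $G$ is $H$-invariant. The standard representation of $\SO(n-1)$ on $T_y M \cong \RR^n$ has a $1$-dimensional fixed subspace, so $A$ is a totally geodesic $1$-dimensional submanifold containing $y$. Since $M \cong \RR^n$, the orbit space $M/H$ is a closed topological half-plane with boundary $A$, so $A \cong \RR$.

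The second step is to produce, for each unit vector $v_0 \in N_y A$, a totally geodesic $2$-dimensional submanifold $P_{v_0} \subset M$ with $A \subset P_{v_0}$ and $P_{v_0} \cong \RR^2$. For $n\geq 4$ one takes $P_{v_0} := \Fix(K)$, where $K \cong \SO(n-2)$ is the stabilizer of $v_0$ in $H$: a tangent-space computation at $y$ gives $\dim P_{v_0} = 2$, and since the fixed set of a compact group of isometries is totally geodesic, $P_{v_0}$ has the desired properties. For $n=3$ one instead uses the fact that every $\SO(n-1)$-invariant symmetric bilinear form on $\RR^{n-1}$ is automatically $O(n-1)$-invariant, so the infinitesimal reflection across the $2$-plane $T_y A \oplus \RR v_0$ extends, by completeness and simple connectedness of $M$, to a global isometry whose fixed set provides $P_{v_0}$. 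Since $H$ acts transitively on unit normal vectors to $A$, every point of $M$ lies on some $P_{v_0}$.

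The restriction $\tilde G := G|_{P_{v_0}}$ is real analytic on $P_{v_0}\setminus\{y\}$ and, working in $H$-adapted cylindrical coordinates $(x, r, \theta) \in \RR \times [0,\infty) \times \SS^{n-2}$ in which $g = a(x,r)^2\, dx^2 + dr^2 + f(x,r)^2\, g_{\SS^{n-2}}$ with $f(x,r)=r+O(r^3)$, a direct calculation shows that $\tilde G$ satisfies an elliptic equation in divergence form
\[
\Div_\omega\!\left(\nabla_{\tilde g}\tilde G\right) = -c_n\,\delta_y
\]
on $P_{v_0}$, where $\tilde g = a^2\, dx^2 + ds^2$ is the induced metric (with $s$ extending $r$ to negative values across $A$), $\omega = a\,f(x,|s|)^{n-2}\, dx \wedge ds$, and $c_n$ is a positive constant. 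Moreover, $\tilde G$ inherits properties (i) and (ii) of a Li--Tam Green's function from $G$ by restriction.

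Applying Theorem~\ref{T:dim2} via Remark~\ref{R:div} to $\tilde G$ on $P_{v_0} \cong \RR^2$ would then yield $\Cr(\tilde G) = \emptyset$, since $b_1(\RR^2) = 0$. If $z \in \Cr(G)$, we choose $v_0$ so that $z \in P_{v_0}$, and then $z \in \Cr(\tilde G)$---a contradiction. The main technical obstacle is that the weight $f^{n-2}$ in $\omega$ is smooth across $A$ for $n$ even but merely $C^{n-3}$ for $n$ odd, whereas Remark~\ref{R:div} is formulated for a $C^\infty$ volume form. This should not be a genuine obstruction, however: $\tilde G$ itself is analytic on all of $P_{v_0}$, so its critical points are isolated with well-defined local structure (by Bers' theorem and the blow-up analysis of Lemma~\ref{L:halfb}), and the topological and dynamical arguments driving the proof of Theorem~\ref{T:dim2}---notably Hopf's index theorem and the saddle-connection analysis of Lemmas~\ref{L:curve}--\ref{L:homo}---remain valid under these weaker smoothness hypotheses on the weight.
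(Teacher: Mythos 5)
Your reduction-to-two-dimensions strategy is the same idea the paper uses, but you implement it via a totally geodesic $2$-plane through the axis rather than via the orbit space $(M\setminus L)/H$, and this choice introduces two genuine gaps that are not addressed.

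First, for $n=3$ the submanifold $P_{v_0}$ need not exist. Your argument invokes $\SO(2)$-invariance of symmetric bilinear forms to extend a linear reflection to a global isometry, but Cartan's extension theorem requires preservation of $R$ \emph{and all its covariant derivatives} $\nabla^k R$; these are tensors of arbitrary rank, not bilinear forms. In fact an $\SO(2)$-invariant metric on $\RR^3$ is not automatically $O(2)$-invariant: in adapted cylindrical coordinates one may have a nontrivial cross term proportional to $dx\otimes d\theta + d\theta\otimes dx$ away from the axis, which destroys the reflection isometry, and with it the totally geodesic $2$-plane through $A$. The paper's orbit-space construction is indifferent to this twist; yours is not.

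Second—and this is the more serious issue—the weight $\omega = a\,f(x,|s|)^{n-2}\,dx\wedge ds$ vanishes on the axis $A$, so $\tilde G$ satisfies a \emph{degenerate} elliptic equation across $A$. You acknowledge this but characterize it as a minor smoothness technicality; it is not. The proof of Theorem~\ref{T:dim2} via Remark~\ref{R:div} relies critically on Lemma~\ref{L:halfb}, whose blow-up analysis requires that the first nonvanishing homogeneous term of the Taylor expansion at a critical point be a harmonic polynomial with respect to the $2$-dimensional Euclidean metric (this is Bers' theorem for a \emph{uniformly} elliptic operator). At a critical point $z\in A$, the leading term of $\tilde G$ is the restriction to the $2$-plane of an $n$-dimensional harmonic polynomial, which is not two-dimensionally harmonic, and the drift term $\nabla\log\omega\sim (n-2)/s$ blows up at $s=0$. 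The local hyperbolic-sector structure, the index computation $\ind = 1-m$, and even the isolatedness of critical points on $A$ therefore do not follow from the two-dimensional theory. This is precisely why the paper's proof, after applying the reduced two-dimensional argument \emph{away from} $L$, devotes a separate and substantially different $n$-dimensional analysis to axis critical points: it shows the tangent cone $P$ there has $\Cr(P)=\{0\}$, invokes Kuiper's $C^1$-equivalence theorem and Wazewski's theorem to produce an integral curve off the axis limiting to $z$, and only then closes the argument. Your proposal omits this entirely, so as written it does not rule out critical points on the axis.
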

\begin{proof}
It is well known that the hypotheses on $H$ imply that there
exists a subset $L\subset M$, fixed under the action of $H$ and diffeomorphic to the real line, such that the action of $H$ is proper and free on
$M\minus L$. As a consequence of this, the orbit space
\[
\mathcal O:=\big(M\minus L\big)/H
\]
is a two-dimensional differentiable manifold diffeomorphic to
$\RR^2$ and the projector $\pi:M\minus L\to\cO$ is smooth.

By the property (iii) of Li--Tam Green's functions, $G$ is
invariant under the isometries of the manifold, so that both $G$
and its gradient $\nabla_gG$ can be pushed forward to the quotient space
$\mathcal O$. We shall use the notation $Y:=\pi_*(\nabla_g G)$ for the reduction of the
$G$-equivariant vector field $\nabla_g G$. The Riemannian volume
form of $M$ and the action of $H$ induce a volume form $\om$ in $\cO$ (the Liouville volume
form, cf.~\cite[Section 3.4]{AM80}) such that $Y$ is divergence-free with respect to $\om$. One should
observe that if $z$ is a critical point of $\nabla_gG$ in $M\minus
L$, $Y$ vanishes at the base point $\pi(z)$.

Under the assumption that $G$ does not have any critical points in
$L$, we shall next show that $Y$ does not vanish in $\cO$ either.
We shall call $h$ the induced metric on $\cO$ and denote by $\nabla_h$
its corresponding Levi-Civita connection. Since $H$ acts by
isometries on $M$, it immediately follows that $Y=\nabla_h F$, where
$F:=\pi_*(G)\in C^\infty(\cO)$ stands for the reduction of the
$H$-invariant function $G$ to the orbit space. If we denote
by $\Div_\om$ the divergence with respect to the volume form $\om$
and use that $\Div_\om(Y)=0$, it stems that
\begin{equation}\label{divom}
\Div_\om(\nabla_h F)=0\,.
\end{equation}

The completion of the incomplete manifold  $(\cO,h)$ is a
Riemannian manifold with boundary
$(\overline\cO,\pd\overline\cO,\overline h)$. It is not difficult to see that there exists a
unique point $\overline y\in\pd\overline\cO$ such that the
function $F$ can be continuously extended to
$\overline\cO\minus\{\overline y\}$. This extension $\overline F$
satisfies the properties (i) and (ii) of Li--Tam Green's functions and tends to $+\infty$ as one
approaches $\overline y$. An easy modification of the
proof of Theorem~\ref{T:dim2} together with Remark~\ref{R:div} can
now be used to prove that $\overline F$ does not have any critical
points in $\cO$.

It only remains to consider the case where $G$ has a critical point $z\in L$. Let us take a global chart $x=(x_1,\dots,x_n)$ and assume without loss of
generality that the coordinates of the critical point $z$ are $0$
and the metric is read in these coordinates as $g_{jk}(x)=\de_{jk}+O(|x|)$. The unique continuation property for elliptic equations~\cite{Ar57} implies that $G-G(z)$ can vanish only up to finite order at $z$, so that the first nonzero homogenous term $P$ of the
Taylor expansion of $G-G(z)$ in the coordinates $(x_1,\dots,x_n)$ at the critical
point $z$ is well defined. Let $d$ be the degree of $P$, and observe that $P$ is harmonic with respect to the Euclidean metric~\cite{Be55} and invariant under the induced action of $H$ by the $H$-invariance of $G$. There is no loss of generality in assuming that $H$ is generated by the vector fields $X_j:=x_1\frac\pd{\pd {x_j}}-x_j\frac\pd{\pd x_1}$, with $j=2,\dots,n-1$, which implies that $L=\{x_1=\cdots=x_{n-1}=0\}$.

We shall next show that $0$ is an isolated critical point of $P$. In order to see this, we start by noticing that $P$ cannot have any critical points in $\RR^n\minus L$: otherwise $\Cr(P)$ would have codimension 1 because of the homogeneity of $P$ and the fact that the $H$-orbits in $\RR^n\minus L$ have codimension 2. In turn, this would contradict the harmonicity of $P$ by Proposition~\ref{P:elem}.

Furthermore, if $P$ has other critical points on $L$, it is not difficult to prove that $\Cr(P)=L\subset P^{-1}(0)$ and that
\[P(x_1,\cdots,x_n)=\big(x_1^2+\cdots+x_{n-1}^2\big)^k\,Q(x_1,\cdots,x_n)\,,\]
where $k$ is a positive integer and $Q$ is an $H$-invariant homogeneous polynomial such that $Q^{-1}(0)\cap L=\{0\}$. It then follows that $P^{-1}(0)$ consists of the line $L$ and possibly a finite union of cones passing through the origin. But $P^{-1}(0)$ must have pure codimension 1 because $P$ is harmonic (cf.\ Proposition~\ref{P:elem}), yielding a contradiction and proving that $\Cr(P)=\{0\}$.

As $\Cr(P)=\{0\}$, it trivially follows that
\[
\big|\nabla P(x)\big|\geq C|x|^{d-1}
\]
with
\[
C:=\inf_{|x|=1}\big|\nabla P(x)\big|>0\,.
\]
Hence a theorem of Kuiper~\cite{Ku72} shows that $G-G(z)$ is $C^1$-equivalent to $P$ in a small neighborhood $U \ni z$, so that $z$ is an isolated critical point of $G$ and $U\backslash G^{-1}(G(z))$ has as many components as $\RR^n\backslash P^{-1}(0)$. Denote by $N$ the number of components, we recall from the proof of Proposition~\ref{T:regions} that  $N\geq 3$. By Wazewski's theorem~\cite[Theorem 3.1]{Ha82}, in each connected component $R_j$ of $U\backslash G^{-1}(G(z))$  ($j=1,\dots, N$) one can find an integral curve of
$\nabla_g G$ whose $\al$- or $\om$-limit is $z$, thus enabling us to conclude that there exists an integral curve $\gamma$ of $\nabla_gG$ which is contained in $M\minus L$ and
whose $\al$- or $\om$-limit is $z$.

The symmetry of $G$ ensures that $\pi\circ\gamma$ is an integral curve of
$\nabla_{\overline h}\overline F$, whose $\al$- or its $\om$-limit
set must belong to the boundary $\pd\overline\cO$. The opposite limit set of this orbit (i.e.,
$\om(\pi\circ\gamma)$ or $\al(\pi\circ\gamma)$ in each case) either lies on the boundary,
does not exist or is a critical point of $\overline F$. If one now considers the latter limit set and argues  as in the proof of
Theorem~\ref{T:dim2}, one readily arrives at a
contradiction, thereby completing the proof of the theorem.
\end{proof}

\section{Dirichlet Green's functions with nondegenerate critical points}
\label{S:Morse}

Throughout this section, the Riemannian manifold $(M,g)$ will be assumed real analytic. Our purpose here is to show that the Dirichlet Green's function of a
generic bounded domain $\Om\subset M$ of class $C^k$ is Morse for all
$k\geq2$, which will be instrumental in the proof of
Theorem~\ref{T:everything}. The usefulness of the nondegeneracy
condition for our purposes lies in the fact that it ensures that the
function is locally $C^{r-1}$-conjugate to its second order Taylor
expansion at the critical point~\cite{Hi76}, so that the structure of the
neighboring level sets can be easily controlled.  (One should notice,
however that the gradients of the function and of its quadratic part do not
need to be $C^1$-conjugate even when the functions are
$C^\om$-conjugate since the Hartman--Grobman theorem~\cite{AR67} only
grants topological conjugacy of the gradients.)

Before proving the main result of this section we find it convenient
to introduce the following definition. (We can define the $C^k(M)$ norm below using the covariant derivative. As an aside, notice that the precise way in which one defines the norm in $C^k(M)$ is inessential for our purposes because $\Phi-\id$ has compact support.)

\begin{definition}
  Two bounded $C^k$ domains $\Om,\Om'\subset M$ are said to be {\em
    $(\ep,k)$-close} if there exists a $C^k$ diffeomorphism $\Phi$ mapping
  $(\Om,\pd\Om)$ onto $(\Om',\pd\Om')$ and such that $\Phi-\id$ is compactly supported and satisfies $\|\Phi-\id\|_{C^k}<\ep$.
\end{definition}

\begin{theorem}\label{T:Morse}
  For any $2\leq k\leq \infty$, let $\Om\subset M$ be a $C^k$ bounded domain and fix a point $y\in\Om$. Then for any $\ep>0$ there exists a smooth domain $\Om'$ $(\ep,k)$-close to $\Om$ whose Green's function $G_{\Om'}$ with pole $y$ is Morse. Furthermore, there exists some $\de>0$ such that the Green's function with pole $y$ of any $C^k$ domain $(\de',k)$-close to $\Om'$ is also Morse for all $\de'<\de$.
\end{theorem}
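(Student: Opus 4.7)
The strategy is a parametric transversality argument in the space of boundary perturbations. I would parametrize a $C^k$-neighbourhood of $\Om$ by the Banach ball
\[
\cB_\ep:=\big\{V\in C^k(M;TM):\supp V\subset K\minus\{y\},\ \|V\|_{C^k}<\ep\big\}\,,
\]
where $K$ is a fixed compact set with $\overline\Om\subset\inte K$; each $V\in\cB_\ep$ generates, by integrating for unit time, a diffeomorphism $\Phi_V$ of $M$ equal to the identity near $y$, and I set $\Om_V:=\Phi_V(\Om)$. The Dirichlet Green's function $G_V:=G_{\Om_V}(\cdot,y)$ is then well defined; standard elliptic regularity, together with the analyticity of $g$, makes it $C^\om$ in $\Om_V\minus\{y\}$ and $C^{k-1}$ up to $\pd\Om_V$, depending continuously on $V$. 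The goal is to show that the evaluation map
\[
F:\cB_\ep\times M\to T^*M\,,\qquad F(V,x):=\dd G_V(x)\,,
\]
defined on the open set $\{(V,x):x\in\Om_V\minus\{y\}\}$, is transverse to the zero section.

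The main computation is the Hadamard shape-derivative formula. Differentiating the identities $\De_g G_{V_0+tV'}=-\de_y$ in $\Om_{V_0+tV'}$ and $G_{V_0+tV'}|_{\pd\Om_{V_0+tV'}}=0$ at $t=0$, after pulling back to the common domain $\Om_{V_0}$, yields that $\dot G:=\pa_tG_{V_0+tV'}|_{t=0}$ is harmonic in $\Om_{V_0}$ with Dirichlet trace
\[
\dot G\big|_{\pd\Om_{V_0}}=-(V'\!\cdot\nu)\,\pd_\nu G_{V_0}\,,
\]
$\nu$ denoting the outward unit normal. By Hopf's lemma $\pd_\nu G_{V_0}<0$ everywhere on $\pd\Om_{V_0}$, and therefore as $V'$ varies the normal component $V'\!\cdot\nu$ sweeps out all of $C^{k-1}(\pd\Om_{V_0})$, so $\dot G$ realises an arbitrary harmonic function on $\Om_{V_0}$ with $C^{k-1}$ boundary trace.

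Transversality of $F$ to the zero section at $(V_0,x_0)\in F^{-1}(0)$ reduces to surjectivity of the vertical part of $dF$,
\[
(V',x')\mapsto \mathrm{Hess}(G_{V_0})(x_0)\cdot x'+\dd\dot G(x_0)\in T^*_{x_0}M\,.
\]
Since $\dot G$ ranges over all harmonic functions on $\Om_{V_0}$ and the evaluation $h\mapsto\dd h(x_0)$ from harmonic functions onto $T^*_{x_0}M$ is surjective (a Poisson-kernel/Cauchy-data duality argument: a vanishing pairing would force the harmonic function $z\mapsto\xi^*\cdot\dd_xG_\Om(x_0,z)$ to have both Dirichlet and Neumann data zero on $\pd\Om_{V_0}$, contradicting its singularity at $x_0$), the second summand already covers $T^*_{x_0}M$. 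Sard--Smale applied to the Fredholm-index-$0$ projection $F^{-1}(0)\to\cB_\ep$ then yields a residual set of regular values $V\in\cB_\ep$; for any such $V$ the restricted map $F(V,\cdot)$ is transverse to the zero section of $T^*M$, which is precisely the statement that every critical point of $G_V$ has invertible Hessian, i.e., $G_V$ is Morse. Approximating such a $V$ in $C^k$ by a smooth vector field and setting $\Om':=\Phi_V(\Om)$ produces the required smooth $(\ep,k)$-close domain with Morse Green's function.

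For the stability statement, Hopf's lemma keeps $\Cr(G_{\Om'})$ compactly contained in $\Om'$, and the Morse property together with compactness force it to be finite, say $\{z_1,\dots,z_N\}$. For $\Om''$ that is $(\de',k)$-close to $\Om'$ with $\de'$ small, elliptic continuity gives that $\|G_{\Om''}-G_{\Om'}\|_{C^2\loc}$ is small away from $y$; the implicit function theorem applied to $\nabla_gG$ at each nondegenerate $z_i$ then produces a unique nearby nondegenerate critical point of $G_{\Om''}$, while a uniform lower bound on $|\nabla_gG_{\Om'}|$ off $\{z_1,\dots,z_N\}$ rules out additional ones. The main obstacle I anticipate is the technical justification of the shape-derivative formula and of the Sard--Smale framework when $\Om$ is only of class $C^k$; the singularity of $G_V$ at the pole $y$ is harmless because all perturbations are supported in $K\minus\{y\}$, so that near $y$ the function $G_V$ is, up to an additive constant, independent of $V$.
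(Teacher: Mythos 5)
Your route is genuinely different from the paper's. The paper perturbs $G_\Om$ \emph{additively}: it covers $\Cr(G_\Om)$ by harmonic charts, uses the Lax--Malgrange theorem to replace the local harmonic coordinates by globally defined harmonic functions $X_j^a$, and then runs a finite-dimensional Sard argument inductively in the coefficients $\la_j^a$ to make $f_N=G_\Om-\sum\la_j^aX_j^a$ Morse; the new domain $\Om'$ is a sublevel set of $f_N$, so that $G_{\Om'}=f_N-\eta$ is automatically the Dirichlet Green's function of $\Om'$, and no infinite-dimensional transversality is invoked. You instead perturb the \emph{domain} through the time-one maps $\Phi_V$, differentiate via the Hadamard shape-derivative identity $\dot G|_{\pd\Om}=-(V'\!\cdot\nu)\,\pd_\nu G$, and run Sard--Smale in the Banach ball $\cB_\ep$. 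Your surjectivity step, via the Cauchy-data argument for $z\mapsto\xi^*\!\cdot\dd_xG_\Om(x_0,z)$, is a clean functional-analytic substitute for the Lax--Malgrange approximation; on the other hand it commits you to verifying $C^1$-dependence of $G_V$ on $V$ and the Fredholm framework, which is precisely the technical overhead you flag, while the paper's version replaces all of this by elementary finite-dimensional Sard and uniqueness of the Dirichlet Green's function.

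There are two real gaps. First, your last step takes $\Om':=\Phi_V(\Om)$, but an (even smooth) diffeomorphism cannot raise boundary regularity, so when $2\leq k<\infty$ this $\Om'$ is only $C^k$, not the \emph{smooth} domain the theorem asserts. The fix is available to you and is exactly what the paper does: once you have the $C^k$ domain $\Phi_V(\Om)$ with Morse Green's function, invoke your own openness statement and replace it by a nearby smooth domain, e.g.\ the sublevel set $\{G_{\Phi_V(\Om)}>\eta\}$, which is smooth because $G$ is analytic in the interior and $\nabla_gG\neq0$ on the boundary by the Hopf lemma. Second, for $k=\infty$ your parameter space $\cB_\ep$ is Fr\'echet rather than Banach and Sard--Smale does not apply as stated; you would need to run the transversality argument at some fixed finite regularity and then upgrade via openness, whereas the paper's use of weak $C^\infty$ topologies handles all $k$ uniformly without modification.
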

\begin{remark}
  Equivalently, the theorem can be restated as follows: Given any $C^k$ bounded domain $\Om\subset M$, $G_{\Phi(\Om)}$ is Morse for a $C^k$-generic embedding $\Phi:\BOm\to M$.
\end{remark}
\begin{proof}
Let us divide the proof in two parts.

\hspace*{-\parindent}{\em Density.} We shall show that there
exists another domain $\Om'\subset M$ of class $C^\infty$ which is
$(\ep,k)$-close to $\Om$ and such that its associated Green's function
$G_{\Om'}$ with pole $y$ is Morse in $\overline{\Om'}\minus\{y\}$.

We start by noticing that $G_\Om\in C^\infty(\Om\minus\{y\})\cap C^{k-1,\be}(\BOm\minus\{y\})$, and that the gradient of $G_{\Om}$
is nonzero on $\pd\Om$ by the Hopf boundary point
lemma~\cite{GT98}. Hence we can choose a small enough constant $\eta>0$ such that the smooth domain $\Om_0\subset\Om$ enclosed by $G_\Om^{-1}(\eta)$ is
$(\frac\ep2,k)$-close to $\Om$. Notice that $\nabla_g G_{\Om}$ does not vanish in a
neighborhood of $y$ by Eq.~\eqref{asympDG}.

The critical set of $G_{\Om}$ being compact, we can choose a finite
number of harmonic charts
$\{(U_a,\vp_a=(x_1^a,\dots,x_n^a))\}_{a=1}^N$~\cite{GW75} so that
$\overline V_a\subset U_a\subset \Om$, $\{V_a\}_{a=1}^N$ is a finite open cover of $\Cr(G_{\Om})$, and the local
coordinates $x_j^a:U_a\to\RR$ satisfy $\De_gx_j^a=0$. As $M\minus U_a$ does not have any compact components and $(M,g)$ is analytic, the
Lax--Malgrange theorem~\cite[Section 3.10.7]{Na68} asserts that these local
harmonic coordinates can be approximated in the $C^\infty(U_a)$ weak
topology by global harmonic functions $X_j^a:M\to\RR$.

We shall prove below that there exist constants $\la_j^a$ arbitrarily close to zero
such that the function
\[
f_N:=G_{\Om}-\sum_{a=1}^N\sum_{j=1}^n\la^a_j X_j^a
\]
is Morse in the closure of
$\cV_N:=\bigcup_{a=1}^NV_a\subset\Om$. As $f_N$ obviously approximates
$G_{\Om}$ in the $C^k(\Om\minus\{y\})$ weak topology and $\nabla_g
G_{\Om}\neq0$ both in $\pd\Om$ and in a neighborhood of $y$, it will
immediately follow from the former claim and the boundedness of $\Om$
that $f_N$ is Morse in $\overline{\Om_0}\minus\{y\}$ for sufficiently
small values of $\la^j_a$.

We shall prove the above claim by induction. Let us begin by showing that there exist constants $\la^1_j$ in an arbitrarily small neighborhood of $0$ such that
\[
f_1:=G_{\Om}-\sum_{j=1}^n\la_j^1 X_j^1
\]
is Morse in $\overline{V_1}$.  Indeed, as $G_{\Om}$ is smooth, Sard's
theorem  ensures that there exists an open and dense subset
$\La_1\subset\RR^n$ such that the Hessian of $G_{\Om}\circ\vp_1^{-1}$
is nonsingular on
\[
\big\{x\in\vp_1(U_1):\nabla(G_{\Om}\circ\vp_1^{-1})(x)=\la\big\}
\]
for all
$\la\in\La_1$. By the stability of Morse functions, the assertion then follows by choosing a small enough $\la\in\La_1$ and setting $\la=(\la^1_1,\dots,\la^1_n)$,
since, by the definition of $X_j^1$, one necessarily has that
\[
\nabla(f_1\circ\vp_1^{-1})(x)=\nabla(G_{\Om}\circ\vp_1^{-1})(x)-J_1(x)\,\la\,,
\]
where the smooth function $J_1: \vp_1(U_1)\to{\rm Mat}(\RR^n)$ is arbitrarily close to the identity in the $C^\infty$ weak topology.

In order to complete the proof, let us next assume as induction
hypothesis that the function
\[
f_{m-1}:=G_{\Om}-\sum_{a=1}^{m-1}\sum_{j=1}^n\la^a_j X_j^a
\]
is Morse in the closure of $\cV_{m-1}:=\bigcup_{a=1}^{m-1}V_a$ for an open and dense set of
values of $(\la^a_j)$. If we now apply the same Sard-type
argument used for $f_1$ to the function $f_{m-1}$, we immediately derive that the function
\[
f_m:=f_{m-1}-\sum_{j=1}^n\la_j^m X_j^m
\]
is Morse in $V_m$ for an open
and dense set $\La_m\subset\RR^n$ of values of $(\la^m_1,\dots,\la^m_1)$. Moreover, the $C^2$-openness of
Morse functions in the compact manifold
$\overline{\cV_{m-1}}$ guarantees that $f_m$ is also
Morse in $\overline{\cV_{m-1}}$ provided that the new parameters
$\la_j^m$ are small enough. By induction in $m$, this proves that there exist arbitrarily small $\la^a_j\in\RR$ such that $f_N$ is Morse in $\overline{\cV_N}$.

By construction, $f_N$ is harmonic in
$\Om\minus\{y\}$. Besides, Thom's isotopy lemma~\cite[Section 20.2]{AR67} ensures that $f_N^{-1}(\eta)\subset\Om$ and $\pd\Om_0$ are $(\frac\ep2,k)$-close provided that the parameters $\la_j^a$ are chosen close enough to zero. Hence the first part of the theorem now follows by defining $\Om'$ to be the bounded domain enclosed by
$f_N^{-1}(\eta)$, so that $G_{\Om'}=f_N-\eta$.\\[3mm]
{\em Openness.} Suppose that $G_\Om$ is Morse. It is clear that
for any $\ep>0$ there exists $\de_1>0$ such that
\[
\max_{\pd(\Om\cap\Om')}G_\Om<\ep\,,\qquad
\max_{\pd(\Om\cap\Om')}G_{\Om'}<\ep
\]
for any $C^k$ domain $\Om'$ which is $(\de_1,k)$-close to $\Om$. In particular,
\begin{equation}\label{abs}
\big|G_\Om-G_{\Om'}\big|<2\ep
\end{equation}
in $\pd(\Om\cap\Om')$. The function $G_\Om-G_{\Om'}$ being
harmonic in $\Om\cap\Om'$, the estimate~\eqref{abs} must hold in
$\Om\cap\Om'$ as well by virtue of the maximum principle, and hence it
follows that $G_{\Om'}$ approximates $G_\Om$ in the
$C^0((\Om\cap\Om')\minus\{y\})$ weak topology as $\Om'$ becomes
close to $\Om$. By Harnack's theorem~\cite{GT98} this
yields weak $C^r$ approximation in $(\Om\cap\Om')\minus\{y\}$ for any $r\in \NN$, and
therefore a simple stability argument ensures that $G_{\Om'}$ is Morse
in any compact subset of $(\Om\cap\Om')\minus\{y\}$. As the gradient
of $G_{\Om'}$ does not vanish either in a neighborhood of~$y$ by the
asymptotics~\eqref{asympDG} or at $\pd\Om$ by Hopf's boundary point
lemma, the theorem follows.
\end{proof}

%
%

\section{Green's functions with prescribed behavior}
\label{S:main}

In this section we prove Theorem~\ref{T:everything}, which  follows from Corollaries~\ref{C:equip}-\ref{C:cp} and Theorem~\ref{T:codim3} below. It is plain that the results presented in this section are in strong contrast with the two-dimensional case, where the level sets of a Li--Tam Green's function on any contractible Riemannian surface are all diffeomorphic to circles as $G$ has no critical points by Corollary~\ref{C.plane}.

The proof, which is of interest in itself, is based on the construction of a $C^\om$ metric in $\RR^n$ whose minimal Green's function approximates the Euclidean Dirichlet Green's function of a prescribed domain $\Om$ (Theorem~\ref{T:main}). This is accomplished by choosing a conformally flat $C^\om$ metric on $\RR^n$ whose conformal factor is approximately 1 in $\Om$ and large in its complement. Suitable decay conditions on the curvature ensure the existence of a minimal Green's function, which permits to complete the proof of the theorem using variational methods.

According to Proposition~\ref{P:gil}, the critical set of a minimal Green's function is automatically nonempty on any noncontractible manifold, which explains why in this section we are mainly interested in analytic metrics on $\RR^n$. Nevertheless, most of the constructions we present below can be painlessly extended to other topologies.

\begin{theorem}\label{T:main}
Let $\Om\subset\RR^n$ be a $C^\infty$ bounded domain with connected boundary, and let $G_\Om$ be its Euclidean Green's function with pole $y$. Then there exists a sequence of
complete analytic metrics $g_j$ in $\RR^n$ whose
minimal Green's functions with pole $y$ approximate $G_\Om$ in the
$C^k(\Om\minus\{y\})$ weak topology, for any $k\in\NN$ and $y\in\Om$.
\end{theorem}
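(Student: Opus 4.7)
\emph{Conformally flat deformation.} My approach is to construct the $g_j$ as conformally flat analytic metrics on $\RR^n$, of the form $g_j=e^{2\phi_j}g_0$, where $g_0$ is the Euclidean metric and $\phi_j\in C^\om(\RR^n)$ is chosen so that $\phi_j\to 0$ in $C^k(\BOm)$ while $\phi_j\to+\infty$, at a prescribed rate, on compact subsets of $\RR^n\minus\BOm$ and at infinity. Such conformal factors are easy to build in the $C^\infty$ category (for instance, by setting $\phi_j\equiv 0$ on a slight shrinking of $\Om$ and imposing $\phi_j(x)\sim j\ms\rho(\dist(x,\Om))$ in the exterior for a suitable nondecreasing profile $\rho$), and the analyticity requirement can be enforced a posteriori by a Whitney-type $C^k$-approximation of a smooth model. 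The growth of $\phi_j$ at infinity makes $g_j$ complete, and a control on its scalar curvature (essentially given by $\De_0\phi_j$) ensures that $(\RR^n,g_j)$ is non-parabolic, so that the minimal Green's function $G_j$ with pole $y$ is well-defined.

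\emph{Reduction to a boundary estimate.} A direct computation with the Laplace--Beltrami operator of $g_j$ shows that in the Euclidean coordinates the equation $\De_{g_j}G_j=-\de_y$ is equivalent to the divergence-form elliptic equation $\Div(w_j\ms\nabla G_j)=-\de_y$ with weight $w_j:=e^{(n-2)\phi_j}$. Since $w_j\to 1$ in $C^k(\BOm)$, in $\Om$ we have $\Div(w_j\ms\nabla G_\Om)=-w_j(y)\ms\de_y+\nabla w_j\cdot\nabla G_\Om$, so the auxiliary function $u_j:=G_j-w_j(y)^{-1}G_\Om$ is free of Dirac singularity at $y$, satisfies a weighted elliptic equation on $\Om$ whose right-hand side $-w_j(y)^{-1}\nabla w_j\cdot\nabla G_\Om$ tends to zero in $L^p(\Om)$, and coincides with $G_j$ on $\pd\Om$. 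Hence, by the maximum principle and interior elliptic regularity, $G_j\to G_\Om$ in $C^k$ on any compact subset of $\Om\minus\{y\}$ provided
\[
\sup_{\pd\Om}G_j\;\longrightarrow\;0\quad\text{as}\quad j\to\infty\,.
\]

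\emph{Capacity estimate and main obstacle.} Proving this boundary bound is the step I expect to be the main technical obstacle. In $\RR^n\minus\BOm$, $G_j$ is $g_j$-harmonic, positive, decays to zero at infinity, and its flux through $\pd\Om$ equals $-1$ (by integrating the equation over $\Om$). Let $h_j$ denote the $g_j$-equilibrium potential of $\pd\Om$ in $\RR^n\minus\BOm$, that is, the $g_j$-harmonic function equal to $1$ on $\pd\Om$ and vanishing at infinity. A double integration by parts in the bilinear form $a_j(u,v):=\int w_j\ms\nabla u\cdot\nabla v\,\dd x$ then yields the variational identity
\[
1\;=\;\int_{\pd\Om}G_j\,\dd\mu_j\,,
\]
where $\dd\mu_j:=-w_j\pd_\nu h_j\,\dd S$ is a positive measure on $\pd\Om$ whose total mass coincides with the $g_j$-capacity of $\pd\Om$ in the exterior. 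Consequently $\inf_{\pd\Om}G_j\leq 1/\operatorname{cap}_{g_j}(\pd\Om)$, and calibrating $\phi_j$ so that it grows quickly enough away from $\pd\Om$ (e.g.\ imposing $\phi_j(x)\geq j\cdot\dist(x,\Om)$) forces $\operatorname{cap}_{g_j}(\pd\Om)\to\infty$. The remaining delicate point is to promote this infimum bound to a supremum bound, which requires a uniform Moser--Harnack-type estimate for $\Div(w_j\ms\nabla\cdot)$ in a collar neighborhood of $\pd\Om$; balancing the rapid exterior growth of $\phi_j$ against the uniform ellipticity needed near $\pd\Om$ is the heart of the construction.
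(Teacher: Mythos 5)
Your overall strategy coincides with the paper's: deform the Euclidean metric conformally so that the conformal factor is $\approx 1$ in $\BOm$ and grows large outside, so that harmonic functions are "pushed" to vanish away from $\Om$. (The paper uses $g_j=\varphi_j g_0$, giving the divergence-form weight $w_j=\varphi_j^{n/2-1}$, which is the same thing as your $w_j=e^{(n-2)\phi_j}$.) The paper also needs the metric to be complete and nonparabolic, and gets this from Kasue's theorem via a curvature-decay estimate built into the analytic approximation~\eqref{Dal}; your appeal to curvature control is the same idea. Your reduction of the problem inside $\Om$ to the statement $\sup_{\pd\Om}G_j\to 0$, via the auxiliary function $u_j=G_j-w_j(y)^{-1}G_\Om$ and the maximum principle plus interior Schauder, is also sound, and your flux identity $1=\int_{\pd\Om}G_j\,\dd\mu_j$ with $\mu_j(\pd\Om)=\operatorname{cap}_{g_j}(\pd\Om)$ is correct.

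However, there is a genuine gap exactly where you flag it, and I think it is not merely technical but a structural obstacle to your route. Your capacity identity only gives $\inf_{\pd\Om}G_j\leq 1/\operatorname{cap}_{g_j}(\pd\Om)$, and to conclude $\sup_{\pd\Om}G_j\to 0$ you would need a Harnack inequality for $\Div(w_j\nabla\cdot)$ with a constant \emph{uniform in $j$} on a collar of $\pd\Om$ whose width is \emph{independent of $j$}. But such uniformity requires the ellipticity ratio $\sup_{\text{collar}}w_j/\inf_{\text{collar}}w_j$ to stay bounded, which forces $w_j$ to stay bounded in that fixed collar. On the other hand, $\operatorname{cap}_{g_j}(\pd\Om)\to\infty$ forces $w_j$ to be unbounded in every fixed exterior collar: a fixed Lipschitz competitor $F$ supported in $\{0<\dist(x,\Om)<\de\}$ yields $\operatorname{cap}_{g_j}(\pd\Om)\leq(\sup_{\text{collar}}w_j)\int|\nabla F|^2\dd x$, so if $w_j$ were uniformly bounded there the capacity would be bounded too. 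These two requirements are incompatible, so the Harnack step as proposed cannot be carried out without a new idea, and the argument stalls precisely at the point you identified.

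The paper avoids this tension entirely by never trying to control the pointwise values of $G_j$ on $\pd\Om$. Instead, it exhausts by annuli $A^R$, uses the Dirichlet principle to write $G_j^R$ as the unique minimizer of $E_j^R(F)=\int_{A^R}\varphi_j^{n/2-1}|\nabla F|^2\dd x$ over the class $\cC^R$, shows $\inf E_j^R\to\inf\cE_\Om^R$ by a squeeze between competitors supported in $\Om$ and cutoffs of $G_j^R$, and upgrades the a.e.\ vanishing of $\nabla G_j^R$ outside $\BOm$ to uniform vanishing on compacts away from $\pd\Om$ via Egorov plus interior Schauder for $j\ms\De_j$. The convergence of energies plus strict convexity of the Dirichlet integral then yields $H^1(A^R\cap\Om)$ convergence of $G_j^R$ to the annular Green's function $G_\Om^R$, and letting $R\to\infty$ and applying a final Schauder bootstrap (exploiting that the drift term $Y_j=(\tfrac n2-1)\vp_j^{-1}\nabla\vp_j$ is small in $C^{k-2,\be}$ on compacts of $\Om$) gives $C^k$ convergence on compacts of $\Om\setminus\{y\}$. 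If you want to salvage your route, it would be more productive to follow the variational scheme rather than trying to prove a uniform Harnack inequality; the latter would in fact fail for the weights you must use.
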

\begin{proof}
One can assume without loss of generality that $y$ is located at the origin of a Cartesian coordinate system $(x_1,\dots, x_n)$. Let $\tilde\vp_j:\RR^n\to[1,\infty)$ be
a smooth function such that $\tilde\varphi_j(x)=1$ if $x\in \Om$
and $\tilde\varphi_j(x)=j$ if $\dist(x,\Om)>\frac1j$. By Whitney's
approximation theorem~\cite[Section 1.6.5]{Na68}, for any $j\in\NN$ there exists an
analytic function $\varphi_j:\RR^n\to\RR$ such that
\begin{equation}\label{Dal}
\sum_{|\al|\leq\max\{3,k\}}\big|D^\al\varphi_j(x)-D^\al\tilde\varphi_j(x)\big|<\frac{\e^{-|x|}}j\,.
\end{equation}
There is no loss of
generality in assuming that $\varphi_j(0)=1$.

Let us now define the conformally flat $C^\om$ metrics
$g_j:=\varphi_jg_0$ on $\RR^n$. For notational simplicity we shall
denote by a subscript $j$ the geometric quantities corresponding to
the metric $g_j$, e.g.\ $\dd V_j$, $\nabla_j$ and $\De_j$. It is clear
that the end of $(\RR^n,g_j)$ is large because this manifold has
Euclidean volume growth. Moreover, the approximation~\eqref{Dal}
ensures that the Riemann tensor is bounded by
\[
\big|\Rm_j(x)\big|<C\,\e^{-|x|}\,,
\]
so that $(\RR^n,g_j)$ has asymptotically nonnegative curvature. By a
theorem of Kasue~\cite{Ka88}, this implies that $(\RR^n,g_j)$ has a
unique minimal positive Green's function $G_j:\RR^n\minus\{0\}\to\RR^+$,
which satisfies
\[
\De_j G_j=-\de_0\,,\qquad
\lim_{|x|\to\infty}G_j(x)=0\,.
\]

It is well known~\cite{SY95} that
\begin{equation}\label{conveq}
G_j(x)=\lim_{R\to\infty}G_j^R(x)
\end{equation}
for all $x\in\RR^n\minus\{0\}$, where $G_j^R\in C^\infty(\overline{A^R})$
stands for the unique solution to the boundary problem
\begin{subequations}\label{BC1R}
\begin{gather}
\De_j G_j^R=0\qquad \text{in
}A^R:=\big\{x\in\RR^n:\tfrac1R<|x|<R\big\}\,,\\
G_j^R\big|_{|x|=R^{-1}}=\frac1{|\SS^{n-1}|R^{n-2}}\,,\qquad G_j^R\big|_{|x|=R}=0\,.
\end{gather}
\end{subequations}
on the annulus of center $0$, inner radius $R^{-1}$ and outer
radius $R$. The limit~\eqref{conveq} is uniform on compact
subsets of $\RR^n\minus\{0\}$.

Now we shall fix any $R$ such that $B(0,R^{-1})\subset \Om\subset B(0,R)$ and show that
\begin{equation}\label{limGRj}
\lim_{j\to\infty}G^R_j(x)=0
\end{equation}
for all $x\in A^R\minus\overline \Om$, this limit being uniform on
compact subsets. In order to prove this, we shall use~\cite{GT98} that
$G_j^R$ is the unique minimizer of the functional
\[
E_j^R(F):=\int_{A^R}\big|\nabla_j
F\big|_{j}^2\,\dd V_j=\int_{A^R}\varphi_j(x)^{\frac n2-1}\big|\nabla F(x)\big|^2\,\dd x\,,
\]
defined on
\[
\mathcal C^R:=\big\{F\in
C^{0,1}(A^R):F\big|_{|x|=\frac1R}=\tfrac1{|\SS^{n-1}|R^{n-2}},\;F\big|_{|x|=R}=0\big\}\,.
\]

Clearly $\inf E_j^R$ is uniformly bounded for all $j$. For instance, if
$F_0$ belongs to the set
\[
\cC(\Om;R):=\big\{F\in C^{0,1}(A^R\cap
\Om):F\big|_{|x|=\frac1R}=\tfrac1{|\SS^{n-1}|R^{n-2}},\;F\big|_{\pd
\Om}=0\big\}\,,
\]
it immediately follows from the embedding
$\cC(\Om;R)\subset\cC^R$ that
\begin{equation}\label{bound}
\inf E_j^R\leq E_j^R(F_0)\leq\big(1+\tfrac1j\big)^{\frac n2-1}\int_{A^R\cap
\Om}|\nabla F_0|^2\,\dd x\,.
\end{equation}
In particular, by letting $F_0$ vary over $\cC(\Om;R)$ it stems that
\begin{equation}\label{upper}
\inf E_j^R\leq \big(1+\tfrac1j\big)^{\frac
n2-1}\inf\cE_\Om^R=\inf\cE_\Om^R+o(1)\,,
\end{equation}
where $\cE_\Om^R:\cC(\Om;R)\to\RR$ denotes the energy functional
\[
\cE_\Om^R(F):=\int_{A^R\cap \Om}|\nabla F|^2\,\dd x
\]
and the symbol $o(1)$ stands for a quantity that tends to zero as
$j\to\infty$.

Let us now suppose that there exist some $\ep_1,\ep_2>0$ and a sequence of nonnegative integers $(j_s)_{s=1}^\infty\nearrow\infty$ such that the Lebesgue measure of the set
\[
U_s^R:=\big\{x\in A^R\minus\overline \Om: \big|\nabla G_{j_s}^R(x)\big|\geq\ep_1\big\}
\]
is at least $\ep_2$ for all $s$. In this case
\[
\limsup_{j\to\infty}\inf E_j^R\geq\lim_{s\to\infty}\inf E_{j_s}^R=\lim_{s\to\infty}E_{j_s}^R(G_{j_s}^R)\geq \lim_{s\to\infty}(j_s-1)^{\frac n2-1}\ep_1^2\ep_2=\infty\,,
\]
contradicting the fact that $\inf E_j^R$ is bounded in
$j$. It then follows that $|\nabla G_j^R|$ tends to zero almost everywhere in
$A^R\minus\overline \Om$.

Let $K$ be any compact subset of $\overline{A^R}\minus\overline \Om$. Standard Schauder
estimates for the differential equation $\De_jG_j^R=0$ yield that~\cite{GT98}
\begin{equation}\label{boundj}
  \|G_j^R\|_{C^{2,\be}(K)} \leq C\| G_j^R\|_{C^0(A^R)}\leq \frac
C{|\SS^{n-1}|R^{n-2}}\,,
\end{equation}
where the maximum principle has been used to derive the second inequality. Moreover, the
constant $C$ can be chosen to depend on $K$, $\be$ and $n$ but not on
$j$, since $j\ms \De_jG_j^R=0$, the principal symbol
of the elliptic operator $j\ms \De_j$ is given by $j\,\vp_j^{-1}\,\id$ and
\[
\lim_{j\to\infty}j\,\vp_j^{-1}(x)=1
\]
for all $x\in K$.

$\nabla G_j^R|_K$ converging to zero almost everywhere, from Egorov's theorem and the uniform bound~\eqref{boundj} for the second derivative of $G_j^R$ it follows that $\nabla G_j^R|_K\to0$ uniformly as $j\to\infty$. The Dirichlet boundary condition then implies that $G_j^R$
tends to zero uniformly in $K$. As a consequence of this, one has that
\[
\lim_{j\to\infty}\big\|G_j^R\big\|_{H^1(K)}^2:=\lim_{j\to\infty}\bigg(\int_KG_j^R(x)^2\,\dd
x+\int_K\big|\nabla G_j^R(x)\big|^2\,\dd x\bigg)=0\,.
\]
In particular, one can find a sequence of smooth domains
$\Om_j\supset \overline \Om$ and functions $\hat
G_j^R\in\cC(\Om_j;R)\subset\cC^R$ such that
\[
\lim_{j\to\infty}\big\| G_j^R-\hat G_j^R\big\|_{H^1(A^R)}=0\,.
\]
Furthermore, the domains $\Om_j$ can be assumed to converge to $\Om$ in
the sense that there exists a sequence of ambient diffeomorphisms
$\Phi_j^R:\RR^n\to\RR^n$ mapping $(\Om,\pd\Om,\pd A^R)$ onto $(\Om_j,\pd\Om_j,\pd A^R)$ with $\|\Phi_j^R-\id\|_{C^1}\to0$ as $j\to\infty$.

Next we shall prove that
\begin{equation}\label{lower}
\inf E^R_j\geq\inf\cE^R_\Om+o(1)\,,
\end{equation}
which together with Eq.~\eqref{upper} implies that
\begin{equation}\label{infI}
\lim_{j\to\infty}\inf E^R_j=\inf\cE^R_\Om\,.
\end{equation}
In order to prove this claim, it suffices to note that
\begin{align}
\inf E^R_j&\geq\big(1-\tfrac1j)^{\frac n2-1}\int_{A^R}\big|\nabla G_j^R\big|^2\,\dd x\notag\\
&= \int_{A^R} \big|\nabla G_j^R\big|^2\,\dd x+o(1)\notag\\
&=\int_{A^R\cap \Om_j} \big|\nabla \hat G_j^R\big|^2\,\dd x+o(1)\notag\\
&=\int_{A^R\cap \Om} \big|\nabla \hat G_j^R\circ\Phi_j^R\big|^2\,\dd
x+o(1)\,.\label{GPhi}
\end{align}
The third equality follows directly from the definition of $\hat G_j^R$,
whereas the fourth one makes use of the fact that $\Phi_j^R$ maps $\Om$
diffeomorphically onto $\Om_j$ and $\|\Phi_j^R-{\rm id} \|_{C^1}\to0$. As
$\hat G_j^R\circ\Phi_j^R\in\cC(\Om;R)$, this yields Eq.~\eqref{lower} and hence~\eqref{infI}.

From Eqs.~\eqref{infI} and \eqref{GPhi} one immediately derives that
\begin{equation}
\lim_{j\to\infty} E_j^R(G_j^R)= \lim_{j\to\infty} \cE^R_\Om(\hat
G_j^R\circ\Phi_j^R)=\inf \cE^R_\Om\,,
\end{equation}
which is well known to imply~\cite{Sr00} that
\[
\lim_{j\to\infty}\big\|\hat G_j^R\circ\Phi_j^R-G^R_\Om\big\|_{H^1(A^R\cap
\Om)}=0\,,
\]
where $G_\Om^R\in C^\om(A^R\cap \Om)$ is the unique solution of
\begin{gather}\label{BC1R}
\De G_\Om^R=0\quad\text{in
}A^R\cap \Om\,,\qquad
G_\Om^R\big|_{|x|=R^{-1}}=\frac1{|\SS^{n-1}|R^{n-2}}\,,\qquad
G_\Om^R\big|_{\pd\Om}=0\,.
\end{gather}
The fact that $\big\|\Phi_j^R-\id \big\|_{C^1}\to0$ now implies that
$G_j^R$ also converges to $G^R_\Om$ in $H^1(A^R\cap \Om)$. Using a uniform gradient
bound for $|\nabla G_j^R|$ as in the proof of~\eqref{limGRj}, one finds that
\begin{equation}\label{lim2}
  \lim_{j\to\infty}G_j^R(x)=G_\Om^R(x)
\end{equation}
uniformly for $x$ in any compact subset of $A^R\cap \Om$.

It is a standard result that
\[
\lim_{R\to\infty} G_\Om^R(x)=G_\Om(x)\,,\qquad
\lim_{R\to\infty}G_j^R(x)=G_j(x)
\]
for all $x\in \Om\minus\{0\}$, this limit being uniform in compact
sets. Therefore, we conclude that $\lim_{j\to\infty}G_j(x)=G_\Om(x)$ uniformly for $x$ in any compact subset of $\Om\minus\{0\}$.

Let $K$ be a compact subset of $\Om\minus\{0\}$. Notice that $\varphi_j\De_ju=\De u+\lan Y_j,\nabla u\ran$, the vector field $Y_j:=(\frac n2-1)\vp_j^{-1}\nabla\vp_j$ satisfying
\begin{equation}\label{Xj}
  \| Y_j\|_{C^{k-2,\be}(K)}<\frac{C_{k,\be}}j
\end{equation}
for any $\be\leq1$ by~\eqref{Dal}. From the fact that
\[
\varphi_j\De_j(G_j-G_\Om)=-\lan Y_j,\nabla G_\Om\ran
\]
and standard Schauder estimates, one obtains that
\begin{align*}
  \|G_j-G_\Om\|_{C^{k,\be}(K)} &\leq C_1\big( \| G_j-G_\Om\|_{C^0(K)}+\|\nabla G_\Om\|_{C^{k-2,\be}(K)}\|Y_j\|_{C^{k-2,\be}(K)}\big)\,,
\end{align*}
where $C_1=C_1(K,k,\be)$ can be taken independent of $j$. From~\eqref{Xj} and the fact that $\|G_j-G_\Om\|_{C^0(K)}\to 0$ as $j\to\infty$, the theorem follows.
\end{proof}

The fact that Theorem~\ref{T:main} yields a $C^k$ approximation ($k\geq2$) is
crucial for relating the topological properties of the level sets of
the Green's function of the curved manifold to those of the Euclidean
Green's function of the domain. In what follows we shall present several
concrete applications of this idea as corollaries of the previous
theorem.  

\begin{corollary}\label{C:topo}
  Let us consider a $C^\infty$ compact domain $K\subset\Om$ and a small
  neighborhood $B$ of $y$. If $G_\Om$ is Morse, for any $\ep>0$ and $k\in\NN$ there exist
  another small neighborhood $B'$ of $y$, a $C^\infty$ diffeomorphism
  $\Theta:K\minus B\to K\minus B'$ with $\|\Theta-\id\|_{C^k(K)}<\ep$ and an analytic metric $g$ on
  $\RR^n$ whose minimal Green's function $G$ satisfies
\begin{equation}\label{diff}
G(x)=(G_\Om\circ\Theta)(x)
\end{equation}
for all $x\in K\minus B$. Moreover, $\Theta$ can be extended to
a $C^1$ diffeomorphism $K\to K$ so that Eq.~\eqref{diff} holds in
$K\minus\{y\}$.
\end{corollary}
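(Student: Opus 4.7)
The plan is to combine Theorem~\ref{T:main} with a structural stability argument for Morse functions. First I would apply Theorem~\ref{T:main} to obtain, for any prescribed $\eta>0$, an analytic metric $g$ on $\RR^n$ whose minimal Green's function $G$ with pole $y$ satisfies $\|G-G_\Om\|_{C^{k+2}(K\minus B)}<\eta$; here I am using that $K\minus B$ is compactly contained in $\Om\minus\{y\}$. By hypothesis $G_\Om$ is Morse on $K\minus\{y\}$, while $\nabla G_\Om$ is non-vanishing in a neighborhood of $y$ by the asymptotics~\eqref{asympDG} and in a neighborhood of $\pd\Om$ by Hopf's boundary point lemma. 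The standard stability of nondegenerate critical points under $C^2$-small perturbations then guarantees that, for $\eta$ sufficiently small, $G$ is also Morse on $K\minus B$, with its critical points $\{q_i\}$ in bijection with those $\{p_i\}$ of $G_\Om$ via $|p_i-q_i|=O(\eta)$, and with Hessians at $q_i$ that are $O(\eta)$-close to the corresponding Hessians of $G_\Om$ at $p_i$.

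Given these two Morse functions that are $C^{k+2}$-close and share the same critical structure, I would construct the conjugating diffeomorphism $\Theta:K\minus B\to K\minus B'$ with $G=G_\Om\circ\Theta$ in two stages. In the first stage, the parametric Morse lemma yields a $C^{k+1}$ diffeomorphism $\Psi$, equal to the identity outside small neighborhoods of the $q_i$ and with $\|\Psi-\id\|_{C^{k+1}}=O(\eta)$, such that $\widetilde G:=G\circ\Psi^{-1}$ agrees with $G_\Om$ to second order at each $p_i$. In the second stage I would apply the Moser trick to the affine path $F_t:=(1-t)\widetilde G+t G_\Om$, using the time-dependent vector field
\[
V_t:=-\frac{\widetilde G-G_\Om}{|\nabla_g F_t|_g^2}\,\nabla_g F_t\,.
\]
This vector field is well-defined and $C^k$-smooth even across the critical set of $F_t$, precisely because the matching of $2$-jets arranged by $\Psi$ makes the numerator vanish to second order at each critical point, the same order as the denominator. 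Its time-one flow $\Theta_1^0$ satisfies $G_\Om\circ\Theta_1^0=\widetilde G$, so setting $\Theta:=\Theta_1^0\circ\Psi$ gives $G_\Om\circ\Theta=G$ on $K\minus B$, with $\|\Theta-\id\|_{C^k}=O(\eta)<\varepsilon$ once $\eta$ is small enough. Multiplying $V_t$ by a cut-off equal to $1$ on a large compact subset of $K$ but vanishing near $\pd K$ allows me to arrange $\Theta=\id$ in a collar of $\pd K$, which guarantees that $\Theta(K\minus B)=K\minus B'$ for an appropriate neighborhood $B'$ of $y$.

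For the extension to all of $K\minus\{y\}$, I would use the asymptotics~\eqref{asympG}--\eqref{asympDG}: both $G$ and $G_\Om$ behave like $c_n\,\dist(\cdot,y)^{2-n}$ near $y$, so for large $\la$ the level sets $\{G=\la\}$ and $\{G_\Om=\la\}$ are both approximately round spheres centred at $y$. Since $\Theta$ matches the first family to the second, it extends continuously by $\Theta(y):=y$, and a direct computation in polar coordinates based on~\eqref{asympG}--\eqref{asympDG} shows that the extension is of class $C^1$ at~$y$. The main technical obstacle I anticipate is verifying the $C^k$ smoothness of the vector field $V_t$ across the critical set of $F_t$: it requires a careful expansion of $\widetilde G-G_\Om$ and of $|\nabla_g F_t|_g^2$ in Morse normal coordinates, exploiting the matching of $2$-jets achieved by $\Psi$, and is what forces the use of $C^{k+2}$ (rather than $C^k$) approximation in the appeal to Theorem~\ref{T:main}.
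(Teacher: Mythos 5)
Your overall strategy is the same as the paper's: approximate $G_\Om$ by the minimal Green's function $G$ of an analytic metric via Theorem~\ref{T:main}, then invoke structural stability of Morse functions to obtain the conjugating diffeomorphism away from the pole, and finally extend across the pole using the asymptotics. The difference is that where the paper simply cites~\cite{AR67} for structural stability and transforms to $f:=|x-y|^nG$ (a $C^2$ Morse function with a minimum at $y$, allowing an application of the Morse lemma) for the extension, you attempt to exhibit the conjugacy directly by a two-stage Moser trick.

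There is, however, a genuine gap in your Moser step. Matching $2$-jets at the critical points makes $\widetilde G-G_\Om=O(|x|^3)$ there, while $|\nabla_g F_t|_g^2\asymp|x|^2$, so the vector field $V_t$ is $O(|x|^2)$ and one can check $DV_t(p_i)=0$; this yields a $C^1$ field near the $p_i$, but \emph{not} a $C^k$ field for $k\geq2$. Indeed, $\pd^2V_t$ contains terms of the form $\pd^2(\widetilde G-G_\Om)\,\pd F_t\,/\,|\nabla F_t|^2$, in which numerator and denominator both vanish to order $|x|^2$, so the quotient is bounded but does not in general extend continuously to $p_i$ (e.g.\ $R=x_1^3$, $Q=x_1^2-x_2^2$ gives $3x_1^2/(x_1^2+x_2^2)$). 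Hence your $V_t$ is only $C^1$ at the critical set and its flow only gives a $C^1$ diffeomorphism, whereas the corollary requires $\Theta$ to be $C^\infty$ on $K\setminus B$. To repair it, the stage-one diffeomorphism $\Psi$ should be chosen (via Morse charts on both sides, glued by a cut-off) so that $\widetilde G\equiv G_\Om$ on \emph{whole} balls around each $p_i$, not merely to second order; then $V_t$ vanishes identically near the critical set and there is nothing to check. Alternatively, as in the paper, one can simply cite the structural stability theorem for Morse functions, whose standard proof handles precisely this point. Your polar-coordinate argument for the $C^1$ extension across $y$ is also stated more loosely than the paper's device of passing to $f=|x-y|^nG$, which reduces the matter to the Morse lemma at the minimum of $f$, but this part is a matter of rigor rather than a wrong idea.
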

\begin{proof}
By Theorem~\ref{T:main}, one can choose an analytic metric $g$ so that $G$ and $G_\Om$ are
arbitrarily close in the $C^{k+1}(K\minus B)$ topology. The structural stability of Morse
functions implies~\cite{AR67} that there exist a small
neighborhood $B'$ of $y$ and a $C^\infty$ diffeomorphism
$\Theta:K\minus B\to K\minus B'$ with $\|\Theta-\id\|_{C^k}<\ep$ and
satisfying~\eqref{diff} for all $x\in K\minus B$.

There is no loss of generality in assuming that $\pd B=G^{-1}(c)$ and $\pd B'=G_\Om^{-1}(c)$
with sufficiently large $c$. By Eq.~\eqref{asympG},
$f(x):=|x-y|^n\,G(x)$ and $f_\Om(x):=|x-y|^nG_\Om(x)$ are $C^2$ Morse functions in $B$ and $B'$, respectively. Define $B_1$ and $B_1'$ to be the bounded sets with
$\pd B_1=G^{-1}(c-1)$ and $\pd B'_1=G_\Om^{-1}(c-1)$. As $f$ and $f_\Om$ have a
minimum at $y$, there exists~\cite{Hi76} a $C^1$ diffeomorphism
$B_1\to B'_1$ mapping $G^{-1}(c')$ onto $G_\Om^{-1}(c')$ for all
$c'>c-1$. By a standard construction, this ensures that this
diffeomorphism can be chosen so as to match $C^1$ with $\Theta$ on
$B_1\minus\overline B$, yielding the desired diffeomorphism $K\to K$.
\end{proof}

\begin{corollary}\label{C:equip}
  Let $\Si\subset\RR^n$ be a compact, codimension $1$ submanifold
  without boundary of class $C^\infty$, and let $y$ be a point
  in the domain bounded by $\Si$. Then, for any $\ep>0$, there exist a compactly supported
  diffeomorphism $\Phi:\RR^n\to\RR^n$ with $\|\Phi-\id\|_{C^k}$ such that $\Phi(\Si)$ is a
  level set of the minimal Green's function with pole $y$ of a complete, analytic
  manifold~$(\RR^n,g)$.
\end{corollary}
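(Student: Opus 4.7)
The plan is to combine Theorem~\ref{T:main} with a level-set perturbation argument. For simplicity assume first that $\Si$ is connected; the general case can be reduced to this by treating one component at a time, working outward-to-inward. Let $\Om\subset\RR^n$ denote the bounded domain with $\pd\Om=\Si$, and let $G_\Om$ be its Euclidean Dirichlet Green's function with pole at $y$.

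My first step would be to exploit Hopf's boundary point lemma to conclude that $|\nabla G_\Om|$ is bounded away from zero on $\Si$. It follows that, for every sufficiently small $\eta>0$, the level set $\Si_\eta:=G_\Om^{-1}(\eta)$ is a smooth codimension-$1$ submanifold of $\Om$, and that the gradient flow of $G_\Om$, truncated by a cutoff supported in a small tubular neighborhood of $\Si$, defines a compactly supported diffeomorphism $\Phi_\eta:\RR^n\to\RR^n$ with $\Phi_\eta(\Si)=\Si_\eta$ and $\|\Phi_\eta-\id\|_{C^k}\to0$ as $\eta\to0^+$.

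Next I would apply Theorem~\ref{T:main} to the pair $(\Om,y)$ to obtain a sequence of complete analytic metrics $g_j$ on $\RR^n$ whose minimal Green's functions $G_j$ with pole $y$ converge to $G_\Om$ in the $C^{k+1}$ topology on compact subsets of $\Om\minus\{y\}$. Fix a small $\eta>0$ and choose a compact neighborhood $K$ of $\Si_\eta$ contained in $\Om\minus\{y\}$. Since $\nabla G_\Om$ is nowhere vanishing on $K$, the same holds for $\nabla G_j$ once $j$ is large enough, so $\eta$ is a regular value of $G_j$. Thom's isotopy lemma then yields a compactly supported $C^k$-diffeomorphism $\Psi_j$, arbitrarily close to the identity in $C^k$ as $j\to\infty$, such that $\Psi_j(\Si_\eta)=G_j^{-1}(\eta)$.

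Setting $\Phi:=\Psi_j\circ\Phi_\eta$ for $\eta$ small enough and $j$ large enough would produce the required compactly supported diffeomorphism with $\|\Phi-\id\|_{C^k}<\ep$ and $\Phi(\Si)=G_j^{-1}(\eta)$, which is a level set of the minimal Green's function of $(\RR^n,g_j)$. The main obstacle is quantitative: the choice of scales must be made in the correct order, fixing $\eta$ first so that $\Phi_\eta$ is $C^k$-small and $\Si_\eta$ is a robust regular level set, and only then letting $j\to\infty$ to control $\Psi_j$ on the $\eta$-dependent neighborhood $K$. This two-scale argument is precisely what forces the use of $C^k$ (rather than merely $C^0$) approximation in Theorem~\ref{T:main}, since without control on the gradients there would be no way to guarantee that $G_j^{-1}(\eta)$ is a smooth submanifold close to $\Si_\eta$.
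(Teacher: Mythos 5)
Your proposal follows essentially the same route as the paper: invoke Hopf's lemma to get a regular interior level set $\Si_\eta$ of $G_\Om$ close to $\Si$, apply Theorem~\ref{T:main} to replace $G_\Om$ by a Green's function $G_j$ of an analytic metric, then use structural stability of regular level sets to obtain an ambient diffeomorphism. The two-scale remark (fix $\eta$ first, then let $j\to\infty$) is also the correct order of quantifiers and matches the spirit of the paper's argument.

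There is, however, one genuine gap. When you write that Thom's isotopy lemma yields $\Psi_j$ with $\Psi_j(\Si_\eta)=G_j^{-1}(\eta)$, you are tacitly assuming that the \emph{entire} level set $G_j^{-1}(\eta)$ lies in the compact neighborhood $K$ of $\Si_\eta$ where your stability argument applies. But $C^{k+1}$ convergence of $G_j$ to $G_\Om$ is only claimed on compact subsets of $\Om\minus\{y\}$, and says nothing about the behavior of $G_j$ outside $\Om$; a priori $G_j^{-1}(\eta)$ could have additional connected components far away, in which case $\Psi_j(\Si_\eta)$ would be a proper subset of the level set, not the level set itself — and the statement requires $\Phi(\Si)$ to \emph{be} a level set. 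The paper closes this gap by a global argument that is independent of Theorem~\ref{T:main}: since $G_j$ is the minimal Green's function of a contractible manifold and tends to zero at infinity, its level sets are compact; and if $G_j^{-1}(\eta)$ had two components, one of them would bound a region not containing the pole $y$, forcing $G_j$ to be constant there by the maximum principle and contradicting unique continuation. Hence the level sets of $G_j$ are connected, and the local diffeomorphism from $\Si_\eta$ onto the nearby piece of $G_j^{-1}(\eta)$ captures the whole level set. Your proof needs this connectedness step (or an explicit a priori bound confining $G_j^{-1}(\eta)$ to $K$) before the composition $\Psi_j\circ\Phi_\eta$ does what you claim.
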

\begin{proof}
  Let $\Om$ be the bounded domain enclosed by $\Si$ and let $G_\Om$ be
  its Green's function with pole $y$. By the Hopf boundary point
  lemma~\cite{GT98}, the gradient of $G_\Om$ does not vanish on
  $\Si$. The level sets of $G_\Om$ being connected, it then follows
  that the $C^\om$ submanifold $\Si':=G_\Om^{-1}(c)$ is
  $(\ep,k)$-close to $\Si$ for small enough $c>0$.

By Theorem~\ref{T:main} one can choose an analytic metric $g$ on
$\RR^n$ such that its minimal positive Green's function $G$ is arbitrarily close to $G_\Om$ in the
$C^k(\Om\minus\{y\})$ weak topology. The level sets of $G$ are necessarily compact because $G$ tends to zero at infinity. They are also connected: $\RR^n$ being contractible, if $G^{-1}(c)$ had more than one connected component, there would be at least two disjoint bounded sets $S_i$, $i=1,2$, such that $G|_{\pd S_i}$ would be constant. As the pole $y$ does not belong to one of these sets, say $S_1$, it follows from the maximum principle that $G|_{S_1}$ is constant, contradicting the unique continuation theorem. In turn, as it is connected, it follows by stability~\cite{AR67} that $G^{-1}(c)$ is diffeotopic to $\Si'$, so that $G^{-1}(c)=\Phi(\Si)$ for some ambient diffeomorphism close to the identity.
\end{proof}

Next we shall apply Theorem~\ref{T:main} to construct analytic
metrics in $\RR^n$ with an arbitrary number of critical points of
fixed Morse index. Let us recall that the {\em Morse index} of a nondegenerate critical point
$z$ of a $C^2$ function $f$ is the number of negative
eigenvalues of the Hessian matrix of $f$ at $z$.

\begin{corollary}\label{C:cp}
Let $\Om$ be a bounded domain in $\RR^n$ with $C^\infty$ connected
boundary and let $b_p(\overline\Om)$ be the Betti numbers of its
closure. Then there exists a $C^\om$
metric in $\RR^n$ whose minimal Green's function with pole $y$
has at least $b_p(\overline\Om)$ nondegenerate critical points of
Morse index $n-p$, for all $p=1,\dots,n-2$.
\end{corollary}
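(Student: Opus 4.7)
The strategy is to first construct a Morse Dirichlet Green's function on a nearby domain and count its critical points via Morse theory on a cobordism, and then transfer this information to the minimal Green's function of an analytic metric on $\RR^n$ using the approximation results already proved. By Theorem~\ref{T:Morse}, there exists a $C^\infty$ bounded domain $\Om'\subset\RR^n$ (with connected boundary, since $\Om'$ is diffeomorphic to $\Om$) that is $C^k$-close to $\Om$ and whose Dirichlet Green's function $G_{\Om'}$ with pole $y$ is Morse. In particular, $b_p(\overline{\Om'})=b_p(\overline\Om)$ for every $p$, and the Morse critical set $\Cr(G_{\Om'})$ is finite.

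Pick a regular value $c>0$ of $G_{\Om'}$ larger than all of its critical values, so that by~\eqref{asympG} the superlevel set $\{G_{\Om'}>c\}$ is an open ball around $y$. The compact manifold $N_c:=\{G_{\Om'}\leq c\}\cap\overline{\Om'}$ has two boundary components, $V_0:=\pd\Om'$ and $V_1:=G_{\Om'}^{-1}(c)\cong\SS^{n-1}$, and the Hopf boundary lemma together with~\eqref{asympDG} ensures that $\nabla G_{\Om'}$ points inward on $V_0$ and outward on $V_1$. Regarding $-G_{\Om'}$ as a Morse function on the cobordism $N_c$ with incoming boundary $V_1$ and outgoing boundary $V_0$, the weak Morse inequalities for cobordisms yield $c_k(-G_{\Om'})\geq b_k(N_c,V_1)$, where $c_k$ denotes the number of critical points of Morse index $k$. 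A standard computation with the long exact sequence of $(N_c,V_1)$, using that $V_1\cong\SS^{n-1}$ and that $N_c$ is homotopy equivalent to $\overline{\Om'}$ with an open ball removed, gives $b_k(N_c,V_1)=b_k(\overline{\Om'})$ for $1\leq k\leq n-2$. Since the Morse indices of $G_{\Om'}$ and $-G_{\Om'}$ at a common nondegenerate critical point sum to $n$, this translates into
\[
c_{n-p}(G_{\Om'})\geq b_p(\overline{\Om'})=b_p(\overline\Om)\qquad\text{for }1\leq p\leq n-2\,.
\]

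To conclude, apply Corollary~\ref{C:topo} with $\Om$ replaced by $\Om'$, taking $K\subset\Om'$ a compact set containing $\Cr(G_{\Om'})$ in its interior and $B$ a small neighborhood of $y$. This yields an analytic metric $g$ on $\RR^n$ whose minimal Green's function $G$ with pole $y$ satisfies $G=G_{\Om'}\circ\Theta$ on $K\minus B$ for some $C^\infty$ diffeomorphism $\Theta$ close to the identity. Since $\Theta$ bijects critical points and preserves both nondegeneracy and Morse index (its Jacobian conjugates the Hessians), $G$ inherits at least $b_p(\overline\Om)$ nondegenerate critical points of Morse index $n-p$ for each $p\in\{1,\dots,n-2\}$. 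The main subtle point of the proof is the Morse-theoretic step: the natural cobordism structure associated with $G_{\Om'}$ yields bounds on critical points of \emph{low} Morse index, whereas the corollary concerns those of high Morse index $n-p$, so one must reverse the direction of the cobordism and compute the relative Betti numbers of $(N_c,V_1)$ correctly via the long exact sequence.
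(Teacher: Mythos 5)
Your argument is correct and follows the same outer scaffolding as the paper's: first perturb $\Om$ via Theorem~\ref{T:Morse} to obtain a nearby domain with Morse Dirichlet Green's function, then establish the lower bound on critical points of each index by Morse theory, and finally transfer to a minimal Green's function on $(\RR^n,g)$ via Corollary~\ref{C:topo}. Where you diverge is in the middle, Morse-theoretic step. The paper handles the pole by passing to the bounded function $f:=-(G_\Om+1)^{-2}$, which extends continuously to $\overline\Om$, equals $-1$ on $\pd\Om$, and has an isolated attractor at $y$; it then invokes Morse--Cairns critical point theory for manifolds with boundary directly on $\overline\Om$, with $y$ accounting for the index-$n$ critical point and the remaining indices bounded by $b_p(\overline\Om)$. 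You instead excise a small sublevel ball around the pole to obtain the cobordism $N_c$ with two boundary spheres, apply the Morse inequalities for cobordisms to $-G_{\Om'}$, and compute $b_k(N_c,V_1)\cong b_k(\overline{\Om'})$ for $k\geq1$ from the long exact sequence of the pair (or excision). Your route is more explicit about the relative homology, at the cost of having to set up the cobordism; the paper's route is shorter but buries the homological bookkeeping inside the citation to \cite{CM69} and requires one to accept that the non-smooth attractor at $y$ is admissible in that theory. Your observation that $\Theta$ preserves nondegeneracy and Morse index because its Jacobian conjugates the Hessians is exactly the point the paper leaves implicit in the phrase ``of the same Morse type,'' and your identification of the index reversal (bounds naturally come out for low indices of $-G_{\Om'}$, i.e.\ high indices of $G_{\Om'}$) is the correct way to see why the statement is phrased with index $n-p$.
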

\begin{proof}
  By translating and slightly deforming the domain if necessary,
  Theorem~\ref{T:Morse} ensures that one can take $y\in\Om$ such that
  the Green's function $G_\Om$ of~$\Om$ with pole $y$ is Morse. By the
  Hopf boundary point lemma~\cite{GT98}, the gradient of $G_\Om$ does
  not vanish on the boundary of $\Om$.  Let us consider the
  $C^2(\Om)$ function given by $f:=-(G_\Om+1)^{-2}$ and denote by
  $\nu$ the exterior normal of $\pd\Om$.  Since $g(\nabla_g
  f(x),\nu(x))$ is negative for all $x\in\pd\Om$ and $y$ is an
  attractor of the local flow of $\nabla_gf$, by the
  Morse theory for manifolds with boundary~\cite{CM69} it follows that $f$ has at
  least $b_p(\overline{\Om})$ critical points of index $n-p$, for all $p=0,\dots,n$.
  Besides, all the critical points of $f$ other than $y$ are also critical points of
  $G_\Om$ and they have the same Morse indices. Therefore Corollary~\ref{C:topo} guarantees that there exists an analytic metric whose minimal Green's function has the same number of critical points in $\Om$ as $G_\Om$ and of the same Morse type. 
\end{proof}
\begin{remark}
For a manifold with boundary $\BOm$, $b_{n-1}(\BOm)=b_n(\BOm)=0$.
One also has that $b_0(\BOm)=1$, but the associated critical point
of $f$ simply reflects that $y$ is an attractor for the local
flow of $\nabla_g G_\Om$.
\end{remark}

To conclude the proof of Theorem~\ref{T:everything}, we shall next
show that the ideas developed in this section can be combined with those of Section~\ref{S:critical} to prove that there exists an analytic metric in $\RR^n$ such that $\Cr(G)$ has codimension at most $3$:

\begin{theorem}\label{T:codim3}
There exists an analytic metric in $\RR^n$ such that the critical
set of its minimal Green's function $G$ with pole $y$ has
codimension at most $3$.
\end{theorem}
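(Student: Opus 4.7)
For $n=3$ the result reduces to an observation: Proposition~\ref{P:elem} already forces the codimension of $\Cr(G)$ to be at least $2$, and the ambient dimension $3$ then bounds it above by $3$, so any analytic metric whose minimal Green's function has a nonempty critical set is admissible, for instance one provided by Corollary~\ref{C:cp} applied to a solid torus.

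For $n\geq 4$ my plan is to produce an $\SO(n-2)$-invariant analytic metric on $\RR^n$ whose minimal Green's function $G$ has at least one critical point off the $\SO(n-2)$-fixed $2$-plane $P$. Equivariance of $G$ then forces the critical orbit through any such point to be a smooth submanifold of $\Cr(G)$ of dimension $n-3$, yielding the desired codimension.

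The construction is carried out in three steps, in parallel with Sections~\ref{S:Morse} and~\ref{S:main} but performed equivariantly. First, I exhibit an $\SO(n-2)$-invariant bounded smooth domain $\Om\subset\RR^n$ with connected boundary and pole $y\in P\cap\Om$ whose Euclidean Dirichlet Green's function $G_\Om$ has a nondegenerate critical orbit off $P$. Quotienting by $\SO(n-2)$, this amounts to finding a bounded $3$-dimensional domain $D\subset\{(x_1,x_2,r):r\geq 0\}$ whose Dirichlet Green's function, with pole on $D\cap\{r=0\}$, for the weighted divergence-form operator
\[
L:=\pd_{x_1}^2+\pd_{x_2}^2+\pd_r^2+\tfrac{n-3}{r}\pd_r
\]
has a nondegenerate critical point in $\{r>0\}$. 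I choose $D$ to be a tubular neighborhood of a loop that traverses only a short segment of the axis and otherwise bulges into $\{r>0\}$, so that $b_1(\overline D)\geq 1$; the divergence-form analog of Corollary~\ref{C:cp} furnished by Remark~\ref{R:div} then yields a critical point of Morse index~$2$, the strong maximum principle combined with the bulging geometry of $D$ places it in $\{r>0\}$, and an equivariant version of Theorem~\ref{T:Morse} makes it nondegenerate. Second, I upgrade Theorem~\ref{T:main} to the equivariant category: starting from the $\SO(n-2)$-invariant cutoff $\tilde\vp_j$, I replace its analytic Whitney approximation $\vp_j$ by its $\SO(n-2)$-average, which stays analytic (the group is compact and acts analytically), preserves the decay estimate~\eqref{Dal}, and yields an $\SO(n-2)$-invariant analytic conformally flat metric $g_j=\vp_jg_0$ with the asymptotic curvature bounds needed for Kasue's theorem. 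The remainder of the proof of Theorem~\ref{T:main} then gives $G_j\to G_\Om$ in $C^k(\Om\setminus\{y\})$. Third, the Morse--Bott critical orbit of $G_\Om$ descends to a nondegenerate critical point of the reduced function on the quotient, which by the implicit function theorem persists under the $C^k$-small equivariant perturbation $G_j-G_\Om$; lifting back, $G_j$ carries a critical $\SO(n-2)$-orbit of dimension $n-3$, so $\Cr(G_j)$ has codimension at most $3$.

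The main obstacle is the first step: ensuring that the critical point produced in the reduced $3$-dimensional problem is both nondegenerate and strictly off the axis $\{r=0\}$. The off-axis location is the subtler issue, since Morse theory alone only guarantees a critical point somewhere in $\overline D$; it requires a careful geometric choice of $D$ together with a strong-maximum-principle argument designed to rule out critical points on $(D\cap\{r=0\})\setminus\{y\}$. Once this model is built, the equivariant upgrade of Theorem~\ref{T:main} and the Morse--Bott persistence argument are routine adaptations of techniques already developed in the paper.
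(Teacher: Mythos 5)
Your high-level plan matches the paper's: reduce to $n\geq4$ via Corollary~\ref{C:cp}, build an $\SO(n-2)$-invariant analytic metric with a critical point off the fixed $2$-plane, and conclude that $\Cr(G)$ contains an $(n-3)$-dimensional $\SO(n-2)$-orbit. The disagreement is in Step~1, where you correctly flag the key difficulty (forcing a critical point strictly off the fixed plane) but do not actually close it, and the resolution you sketch would not work as stated.

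Concretely, you want to apply a ``divergence-form analog of Corollary~\ref{C:cp} furnished by Remark~\ref{R:div}'' to the reduced singular operator $L=\pd_{x_1}^2+\pd_{x_2}^2+\pd_r^2+\tfrac{n-3}{r}\pd_r$ in $\{r\geq0\}$. But Remark~\ref{R:div} only states that the proof of the \emph{two-dimensional upper bound} (Theorem~\ref{T:dim2}) extends to divergence-form operators; it says nothing about the Morse-theoretic lower bound of Corollary~\ref{C:cp} or about Theorem~\ref{T:Morse}, let alone for a degenerate operator whose coefficient blows up on a codimension-one stratum of the reduced domain's boundary. Even granting such an extension, Morse theory on the reduced $3$-ball only guarantees a critical point \emph{somewhere} in $\bar D$, and neither the maximum principle alone nor the geometry of a bulging tube rules out that the critical point sits on the degenerate locus $\{r=0\}$ (where it would correspond to a single $\SO(n-2)$-fixed point, not an orbit of dimension $n-3$). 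You also invoke an ``equivariant Theorem~\ref{T:Morse}'' for nondegeneracy, which is again unestablished.

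The paper closes this gap by a different and much more rigid device: it adds the discrete reflections $\si_2,\si_3$ (and, via the $\SO(n-2)$-average, the remaining $\si_k$) to the isometry group, so that the line $L=\{x_2=\cdots=x_n=0\}$ is invariant under the gradient flow of $G_\Om$. The model domain $\Om_0\subset\{x_1>0\}$ is a genus-$N$ handlebody whose intersection with $L$ is a disjoint union of intervals \emph{strictly away from} the fixed plane $\Pi=\{x_1=x_4=\cdots=x_n=0\}$. Since $G_\Om>0$ in $\Om$ and vanishes on $\pd\Om$, the restricted function $\Psi_\Om=G_\Om|_{L\cap\Om}$ attains an interior maximum on each interval of $L\cap\Om$ by Rolle's theorem; by $L$-invariance of the flow, these maxima are genuine critical points of $G_\Om$, and they have $x_1>0$ by construction, hence lie off $\Pi$. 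No use of Morse theory for manifolds with boundary, no Sard/transversality step, and no Morse nondegeneracy is needed: robustness under the equivariant approximation of Theorem~\ref{T:main} is a $C^0$-statement about isolated local maxima of a one-variable function. That is what your Step~3 should be replaced by; the Morse--Bott/implicit function route requires nondegeneracy you have not secured. Your Step~2 (equivariant averaging of the Whitney approximant) does coincide with the paper's argument and is fine.
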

\begin{proof}
First of all, let us take Cartesian coordinates $(x_1,\dots,x_n)$ in $\RR^n$ so that the pole of $G$ is given by $y=0$. Let us consider the polynomial
\[
Q(x_1,x_2):=\prod_{k=0}^{N-1}[(x_1-2k-1)^2+x_2^2-1\big]^2\,,
\]
where $N$ is a positive integer, and define the domain in $\RR^3$
\begin{align}\notag
\Om_0&:=\big\{x\in\RR^3: Q(x_1,x_2)+x_3^2< a\,,\; x_1>0\big\}\,.
\end{align}
$\Om_0$ is obviously diffeomorphic to a solid torus of genus $N$ in $\RR^3$ if
$a>0$ is sufficiently small.

By Corollary~\ref{C:cp} it suffices to consider the case where
$n\geq4$. We embed $\RR^3$ into $\RR^n=\RR^3\times\RR^{n-3}$ via
the map $\imath(x):=(x,0)$. Let $H\subset\mathrm{SO}(n)$ be the
group of rotations generated by the vector fields
$
X_j:=x_1e_j-x_je_1
$,
where $j=4,\dots, n$ and $e_k$ stands for
the $k$-th unit vector of the canonical basis. We define a domain
in $\RR^n$ as
\begin{align*}
\Om_1&:=\inte\big(H\cdot\imath(\overline{\Om_0})\big)=\inte\big\{R\circ\imath(x):x\in
\overline{\Om_0},\;R\in H\big\}\,.
\end{align*}

The boundary of the domain $\Om_1$ has corners on $\Pi\cap
\pd\Om_1$, where $\Pi$ denotes the 2-plane
$\{x\in\RR^n:x_1=x_4=\cdots=x_n=0\}$, so we start by defining a
$C^2$ domain $\Om$ by rounding off the corners of $\Om_1$. This new domain can be taken arbitrarily $C^0$-close to $\Om_1$. Moreover, as $\Om_1$ is
invariant under $H$ and under the involutions
$\si_k:\RR^n\to\RR^n$ given by
\[
\si_k(x):=x-2\lan x,e_k\ran\, e_k\qquad (k=1,\dots,n)\,,
\]
we can take $\Om$ to be invariant under these
transformations as well. We  call $H_0$ the group generated by
$\si_2$ and $\si_3$, which is isomorphic to $\ZZ_2\oplus\ZZ_2$, and set
\[
L:=\big\{x\in\RR^n:\, x_2=\cdots=x_n=0\big\}\,.
\]

We denote by $G_{\Om}$ the Euclidean Dirichlet Green's
function of $\Om$ with a pole at $0$. $G_{\Om}$ is
invariant under the Euclidean isometries which preserve both the
pole $y=0$ and the domain $\Om$, so $G_{\Om}\circ
\si_k=G_{\Om}$ for $k=1,\dots,n$. In particular, the subset
$(L\cap\Om)\minus\{0\}$ is therefore invariant under the local
flow of the gradient field $\nabla G_{\Om}$.

Let us consider the inclusion $\psi_\Om:L\cap\Om
\to\Om$. Clearly $L\cap \Om$ has $2N+1$ connected components, which we shall
denote by $L_\al$ ($\al=-N,\dots, N$). Let us order these components, by relabeling them if necessary, so that $\lan
e_1,x\ran<\lan e_1,x'\ran$ if $x\in L_\al$, $x'\in L_{\al'}$ and
$\al<\al'$. The function
\[
\Psi_\Om:=G_{\Om}\circ \psi_\Om:(L\cap\Om)\minus\{0\}\to\RR
\]
is of class $C^\om$ in $L_\al$ and vanishes on $\pd L_\al$ by the
boundary conditions of $G_{\Om}$. Moreover, $\Psi_\Om$ is
everywhere positive because so is $G_{\Om}$, which implies that $\Psi_\Om$ has a local maximum in $L_\al$ for
all $\al\neq0$  by Rolle's theorem. The invariance of $(L\cap\Om)\minus\{0\}$ under
the local flow of $\nabla G_{\Om}$ ensures that the aforementioned
maxima of $\Psi_\Om$ correspond to critical points of $G_{\Om}$, which are necessarily
of saddle type.

Let $z\in L\cap\Om$ be one of the above critical points of $G_{\Om}$. As
both $G_{\Om}$ and $\nabla G_\Om$ are $H$-equivariant, $\Cr(G_\Om)$ must
contain the $H$-orbit passing through $z$, which has dimension $n-3$. Thus
the critical set of $G_\Om$ has codimension at most $3$.

We shall now construct a sequence of analytic metrics $g_j$ on $\RR^n$
such that the minimal Green's functions $G_j$ in $(\RR^n,g_j)$ with pole $0$
approximate $G_\Om$ weakly in $\Om\minus\{0\}$. The construction is based
on a modification of the methods of proof of the Theorem~\ref{T:main}
adapted to the symmetries of the domain $\Om$. Again we denote by
$\tilde\vp_j:\RR^n\to[1,\infty)$ a smooth function such that
$\tilde\varphi_j(x)=1$ if $x\in \Om$ and $\tilde\varphi_j(x)=j$ if
$\dist(x,\Om)>\frac1j$. $\Om$ being ($H\oplus H_0$)-invariant, we can obviously take $\tilde\varphi_j$ invariant under $H\oplus H_0$. For any $j\in\NN$ there exists an analytic function
$\hat\varphi_j:\RR^n\to\RR$ such that~\cite[Section 1.6.5]{Na68}
\begin{equation}\label{Dal2}
\sum_{|\al|\leq3}|D^\al\hat\varphi_j(x)-D^\al\tilde\varphi_j(x)|<\frac{\e^{-|x|}}j\,.
\end{equation}
We can obviously assume that $\hat\varphi_j(0)=1$.

Observe that both the set
\[
\La_j:=\big\{x\in\RR^n:\dist(x,\pd\Om)>\tfrac1j\big\}
\]
and the majorating function $\e^{-|x|}/j$ in~\eqref{Dal2} are
invariant under the compact group $H\oplus H_0$. Consequently, we can
define a symmetrization $\varphi_j\in C^\om(\RR^n)$ of the function
$\hat\varphi_j$ by
\[
\varphi_j(x):=\frac14\sum_{\si\in H_0}\int_H \hat\varphi_j(\si\circ
h(x))\,\dd h\,,
\]
where $\dd h$ denotes the normalized Haar measure of $H$.  Since
$D^\al\tilde\varphi_j|_{\La_j}=0$, it readily follows from
Eq.~\eqref{Dal2} that for all $x\in\RR^n$ with
$\dist(x,\pd\Om)>\frac1j$ we have the estimate:
\begin{align*}
\sum_{|\al|\leq3}|D^\al\varphi_j(x)-D^\al\tilde\varphi_j(x)|&\leq
\frac14\sum_{\si\in H_0}\int_H
\sum_{|\al|\leq3}\big|D^\al(\hat\varphi_j\circ\si\circ
h)(x)-D^\al\tilde\varphi_j(x)\big|\,\dd h\\
&\leq\frac{C\e^{-|x|}}j\,,
\end{align*}
where $C$ does not depend on $j$.

If we now define the complete metric $g_j:=\varphi_jg_0$, Kasue's
theorem~\cite{Ka88} shows that $(\RR^n,g_j)$ admits a minimal
positive Green's function $G_j$, and from the proof of Theorem~\ref{T:main} it follows that
$G_j$ approximates $G_\Om$ weakly in $C^l(\Om\minus\{0\})$ for any $l\in\NN$.
Moreover, $\si_k$ is an isometry of $(\RR^n,g_j)$ by construction, so that the line
$L$ must be invariant under the local flow of the gradient field
$\nabla_j G_j$. As in the proof of Theorem~\ref{T:main}, we are denoting by a subscript $j$ the objects corresponding to the metric $g_j$.

Let us use the notation $\psi:L\to\RR^n$ for the inclusion and
set $\Psi_j:=G_j\circ\psi\in C^\om(L\minus\{0\})$. As
$\Psi_j\to\Psi_\Om$ uniformly on compact subsets of
$(L\cap\Om)\minus\{0\}$ and $\Psi_\Om$ has at least $2N$ local maxima,
it is standard that $\Psi_j$ also has at least $2N$ local maxima if
$j$ is large enough. Rolle's theorem and the fact that $\Psi_j$ tends
to $+\infty$ at $0$ ensure that $\Psi_j$ also has at least $2N$ local minima. By symmetry, these
local extrema of $\Psi_j$ correspond to critical points of $G_j$, and the
invariance of $G_j$ under $H$ implies that $\Cr(G_j)$ also
contains the $H$-orbit passing through each of these critical points,
which has dimension $n-3$.  This shows that the critical set of $G_j$ has
codimension at most $3$ for large $j$, as claimed.
\end{proof}
\begin{remark}
The proof of Theorem~\ref{T:codim3} relies on the construction of a
metric in $\RR^n$ with an $\SO(n-2)$ isometry subgroup leaving a point
$y$ invariant and whose Green's function $G$ with pole $y$ has a
nonempty critical set. However, we saw in
Theorem~\ref{P:absence} that the existence of an $\SO(n-1)$
isometry group automatically implies that $\Cr(G)=\emptyset$, so this construction cannot be adapted to obtain critical sets of
codimension~2. The
question of whether $\dim\Cr(G)\leq n-3$ for any Li--Tam Green's
function of a contractible manifold remains open.
\end{remark}

\section*{Acknowledgements}

The authors are indebted to Pawel Goldstein and Tadeusz Mostowski for valuable discussions.  A.E.\ is partially supported by the DGI and the Complutense University--CAM under grants no.~FIS2008-00209
and~GR58/08-910556. D.P.-S. acknowledges the financial support of the Spanish MICINN through the Ram\'on y Cajal program and
the partial support of the DGI under grant no.~MTM2007-62478.

\bibliographystyle{amsplain}

\end{document}